\DeclareMathAlphabet{\mathscrbf}{OMS}{mdugm}{b}{n}
\definecolor{violet}{rgb}{0.0,0.2,0.7}
\definecolor{rouge2}{rgb}{0.8,0.0,0.2}
\renewcommand{\phi}{\varphi}
\newcommand{\into}{\hookrightarrow}
\newcommand{\map}{\dashrightarrow}
\newcommand{\wt}{\widetilde}
\newcommand{\wh}{\widehat}
\newcommand{\wb}{\overline}
\renewcommand{\le}{\leqslant}
\renewcommand{\ge}{\geqslant}
\newcommand{\sE}{\mathscr{E}}
\newcommand{\sG}{\mathscr{G}}
\newcommand{\sH}{\mathscr{H}}
\newcommand{\sI}{\mathscr{I}}
\newcommand{\sL}{\mathscr{L}}
\newcommand{\sM}{\mathscr{M}}
\newcommand{\sN}{\mathscr{N}}
\newcommand{\sO}{\mathscr{O}}
\newtheorem{thm}{Theorem}[section]
\newtheorem{lemma}[thm]{Lemma}
\newtheorem{cor}[thm]{Corollary}
\newtheorem{prop}[thm]{Proposition}
\newtheorem*{thm*}{Theorem}
\theoremstyle{definition}
\newtheorem{defn}[thm]{Definition}
\newtheorem{notation}[thm]{Notation}
\newtheorem{defn-thm}[thm]{Definition-Theorem} 
\newtheorem{defn-lemma}[thm]{Definition-Lemma}
\theoremstyle{remark}
\newtheorem{claim}[thm]{Claim}
\newtheorem{fact}[thm]{Fact}
\newtheorem*{not-and-def}{Notation and definitions}
\newtheorem{rem}[thm]{Remark}
\numberwithin{equation}{section}
\def\factor#1.#2.{\left. \raise 2pt\hbox{$#1$} \right/\hskip -2pt\raise -2pt\hbox{$#2$}}
\begin{document} 

\title[Codimension one foliations with trivial canonical class on singular spaces II]{Codimension one foliations with numerically trivial canonical class on singular spaces II}

\author{St\'ephane \textsc{Druel}}

\address{St\'ephane Druel: Univ. Lyon, CNRS, Universit\'e Claude Bernard Lyon 1, UMR 5208, Institut Camille Jordan, F-69622 Villeurbanne, France.} 

\email{stephane.druel@math.cnrs.fr}

\author{Wenhao \textsc{Ou}}

\address{Wenhao Ou: Academy of Mathematics and Systems Science, Chinese Academy of Sciences, 55 ZhongGuanCun East Road, Beijing, 100190, China.} 

\email{wenhaoou@amss.ac.cn}


\subjclass[2010]{37F75}

\begin{abstract}
In this article, we give the structure of codimension one foliations with canonical singularities and numerically trivial canonical class on varieties with klt singularities. Building on recent works of Spicer, Cascini - Spicer and Spicer - Svaldi, we then describe the birational geometry of rank two foliations with canonical singularities and canonical class of numerical dimension zero on complex projective threefolds. 
\end{abstract}

\maketitle
{\small\tableofcontents}

\section{Introduction}

In the last few decades, much progress has been made in the classification of complex projective (singular) varieties. The general viewpoint is that complex projective varieties should be classified according to the behavior of their canonical class. Similar ideas have been successfully applied to the study of global properties of holomorphic foliations. This led, for instance, to the birational classification of foliations by curves on surfaces (\cite{brunella}, \cite{mcquillan08}), generalizing most of the important results of the Enriques-Kodaira classification. 
However, it is well known that the abundance conjecture fails already in ambiant dimension two.
In very recent works, a foliated analogue of the minimal model program is established for rank two foliations on projective threefolds (\cite{spicer}, \cite{cascini_spicer} and \cite{spicer_svaldi}).

\medskip

The foliated analogue of the minimal model program aims in particular to reduce the birational study of mildly singular foliations with numerical dimension zero on complex projective manifolds to the study of associated minimal models, that is, mildly singular foliations with numerically trivial canonical class on klt spaces. In \cite{lpt}, motivated by these developments, the authors describe the structure of codimension one foliations with canonical singularities (we refer to Section 2 for this notion) and numerically trivial canonical class on complex projective manifolds. This result was extended by the first-named author to the setting of projective varieties with canonical singularities in \cite{cd1fzerocan}.
However, from the point of view of birational classification of foliations, this class of singularities is inadequate.
The main result of this paper settles this problem in full generality.

\begin{thm}\label{thm_intro:main}
Let $X$ be a normal complex projective variety with klt singularities, and let $\sG$ be a codimension one
foliation on $X$ with canonical singularities. Suppose furthermore that $K_\sG\equiv 0$.
Then one of the following holds.
\begin{enumerate}
\item There exist a smooth complete curve $C$, a complex projective variety $Y$ with canonical singularities and 
$K_Y \sim_\mathbb{Z}0$, as well as a quasi-\'etale cover $f \colon Y \times C \to X$ such that $f^{-1}\sG$ is induced by the projection $Y \times C \to C$.
\item There exist complex projective varieties $Y$ and $Z$ with canonical singularities, as well as a quasi-\'etale cover 
$f \colon Y \times Z \to X$ and a foliation $\sH\cong \sO_Z^{\, \dim Z -1}$ on $Z$ 
such that $f^{-1}\sG$ is the pull-back of $\sH$ via the projection $Y \times Z \to Z$. In addition, we have
$K_Y \sim_\mathbb{Z} 0$, $Z$ is an equivariant compactification of a commutative algebraic group of dimension at least $2$
and $\sH$ is induced by a codimension one Lie subgroup.
\end{enumerate}
\end{thm}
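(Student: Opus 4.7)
The plan is to reduce to the analogous structure theorem in \cite{cd1fzerocan}, which handles the case of canonical (rather than klt) ambient singularities, by organizing the argument around the standard dichotomy between algebraically integrable and non-algebraically integrable foliations. In this dichotomy, Case (1) of the statement corresponds to algebraic integrability with a one-dimensional leaf space, while Case (2) captures the non-integrable, transversally homogeneous situation.

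\medskip

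A first step is to take a small $\mathbb{Q}$-factorialization of $X$ and a log resolution $\pi\colon \widetilde X\to X$: the canonical singularities of $\sG$ yield $K_{\pi^{-1}\sG}-\pi^{\ast}K_\sG\geq 0$ with support in the exceptional locus, so $K_{\pi^{-1}\sG}$ is effective and exceptional over $X$, and one may run a foliated MMP relative to $\pi$ on $\widetilde X$. A second step is a case split on algebraic integrability of $\sG$. If $\sG$ is algebraically integrable, consider the rational fibration $X\map B$ onto the leaf space; a canonical bundle formula for the foliated pair, combined with $K_\sG\equiv 0$, the klt hypothesis on $X$, and positivity arguments on the discriminant and moduli parts, should force $\dim B=1$ and the general fiber $Y$ to have canonical singularities with $K_Y$ torsion. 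An isotriviality argument in the spirit of Shafarevich--Viehweg then produces a quasi-\'etale cover $Y\times C\to X$ realising $\sG$ as the projection to $C$, giving Case (1). If $\sG$ is not algebraically integrable, the transverse structure becomes central: Bogomolov-type inequalities and the triviality of the conormal on a suitable cover, combined with the results of \cite{lpt} in the smooth case and \cite{cd1fzerocan} in the canonical case, should produce, after a quasi-\'etale cover, a pull-back structure from a foliation on an equivariant compactification of a commutative algebraic group induced by a codimension-one Lie subgroup, namely Case (2).

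\medskip

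The main obstacle is the new content relative to \cite{cd1fzerocan}: the ambient singularities are only klt, not canonical, and klt singularities admit neither crepant resolutions nor canonical-index covers in general. Consequently, one cannot simply pull back the earlier structure theorem, and the heart of the argument must reconcile the canonical singularities of $\sG$ with the weaker klt hypothesis on $X$. I expect this to rely on the recent splitting theorems for klt varieties adapted to reflexive differentials, together with a careful descent step that transports the product decomposition from a quasi-\'etale cover of $\widetilde X$ back down to $X$, using the fact that canonical foliation singularities and quasi-\'etaleness behave well under composition of covers.
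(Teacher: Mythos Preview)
Your proposal has a genuine gap: it misidentifies both the organizing dichotomy and the key technical input needed to pass from canonical to klt ambient singularities.

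First, the paper does not split on algebraic integrability of $\sG$. The actual trichotomy is on $\nu(X)\in\{-\infty,0,1\}$ (this bound is \cite[Lemma 12.5]{cd1fzerocan}), combined with the dichotomy of \cite[Proposition 12.3]{cd1fzerocan}: either $\sG$ is closed under $p$-th powers for almost all $p$, or $\sG$ is defined by a closed rational $1$-form with values in a flat line bundle. In the $p$-closed case one \emph{proves} algebraic integrability (Theorems \ref{thm:grothendieck_katz} and \ref{thm:algebraic_integrability_nu_un}) and then invokes \cite[Theorem 1.5]{cd1fzerocan}; in the closed-form case one shows $\nu(X)=-\infty$, kills the flat twist after a quasi-\'etale cover (Proposition \ref{prop:flatness_torsion}), and invokes \cite[Theorem 11.3]{cd1fzerocan}. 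Note also that Cases (1) and (2) of the statement do not line up with integrable versus non-integrable: a codimension-one Lie subgroup can be either closed or dense, so Case (2) contains integrable examples.

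Second, your proposed first step of passing to a log resolution and running a foliated MMP is not what is done and would not work as stated: the foliated MMP for codimension-one foliations is only available on threefolds, whereas the theorem is in arbitrary dimension. The paper instead runs the \emph{ordinary} MMP on $X$ to reach a Mori fiber space (after $\mathbb{Q}$-factorialization), and the foliation descends with its canonical singularities intact.

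Third, and most importantly, you have not located the actual technical content that bridges klt to canonical. The obstruction is concrete: in \cite{cd1fzerocan} one cuts down to a general complete-intersection surface $S\subset X$ and applies the classical Baum--Bott and Camacho--Sad index formulas on $S$. When $X$ is only klt, $S$ has quotient (not smooth) singularities, and these formulas are unavailable. The heart of the present paper is to establish orbifold versions of Baum--Bott and Camacho--Sad on surfaces with quotient singularities (Propositions \ref{prop:BB_orbifold} and \ref{prop:CS_orbifold}), together with an orbifold Baum--Bott partial connection (Lemma \ref{lemma:residue_orbifold}). These feed into Proposition \ref{prop:transverse} (structure over a Mori fiber space of relative dimension one) and Proposition \ref{prop:general_type} (ruling out the Touzet polydisc-quotient alternative when $\nu(X)=1$), which are precisely the ingredients that force algebraic integrability in the $p$-closed case. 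Your appeal to ``splitting theorems for klt varieties adapted to reflexive differentials'' and a descent step does not substitute for these index computations; without them the argument stalls at the same point where \cite{cd1fzerocan} needed canonical ambient singularities.
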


As an immediate consequence, we prove the abundance conjecture in this setting.

\begin{cor}
Let $X$ be a normal complex projective variety with klt singularities, and let $\sG$ be a codimension one
foliation on $X$ with canonical singularities. If $K_\sG\equiv 0$, then $K_\sG$ is torsion.
\end{cor}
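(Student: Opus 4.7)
The plan is to apply Theorem \ref{thm_intro:main} and then check, case by case, that the canonical class of the pulled-back foliation is torsion (indeed linearly trivial), after which the result descends through the quasi-\'etale cover.

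First, let $f\colon W \to X$ denote the quasi-\'etale cover supplied by the theorem, where either $W = Y\times C$ (case (1)) or $W = Y\times Z$ (case (2)). Since $f$ is \'etale in codimension one, it does not ramify along the tangent sheaf of $f^{-1}\sG$, and one has a $\mathbb{Z}$-linear equality $K_{f^{-1}\sG} = f^{*}K_{\sG}$. So it suffices to show that $K_{f^{-1}\sG}$ is torsion on $W$. Once this is done, a standard argument using the norm map (or equivalently the fact that a finite pullback of line bundles has finite kernel modulo torsion) yields that $K_{\sG}$ itself is torsion on $X$.

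Next, I would compute $K_{f^{-1}\sG}$ in both cases from the standard exact sequence for a foliation pulled back along a smooth morphism. In case (1), $f^{-1}\sG$ is induced by the second projection $\pi\colon Y\times C \to C$, so its tangent sheaf is $\pi_Y^{*}T_Y$, and thus $K_{f^{-1}\sG} = \pi_Y^{*}K_Y$; since $K_Y\sim_{\mathbb{Z}} 0$, this is trivial. In case (2), write $\pi\colon Y\times Z \to Z$ and $\pi_Y\colon Y\times Z\to Y$ for the two projections. The tangent sheaf of $f^{-1}\sG = \pi^{-1}\sH$ fits into
\[
0 \longrightarrow \pi_Y^{*}T_Y \longrightarrow T_{f^{-1}\sG} \longrightarrow \pi^{*}\sH \longrightarrow 0,
\]
so taking determinants and dualising gives
\[
K_{f^{-1}\sG} \;=\; \pi_Y^{*}K_Y \;+\; \pi^{*}K_{\sH}.
\]
By the theorem $K_Y\sim_{\mathbb{Z}} 0$, and since $\sH\cong \sO_Z^{\,\dim Z -1}$ one has $K_{\sH} = -\det\sH = 0$. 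Hence $K_{f^{-1}\sG}\sim 0$ in this case as well.

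The main technical obstacle here is essentially invisible, because it has already been absorbed into Theorem \ref{thm_intro:main}: the determinantal computation of $K_{f^{-1}\sG}$ on a product is routine once the tangent sequence of a pulled-back foliation is in place, and the passage from $W$ to $X$ uses only that $f$ is quasi-\'etale. The only small point that requires care is ensuring that the formula $K_{f^{-1}\sG} = f^{*}K_{\sG}$ is valid across the non-\'etale locus; this holds because $f$ is \'etale in codimension one and both $W$ and $X$ are normal, so we can argue on the smooth locus of $f$ and extend by reflexivity.
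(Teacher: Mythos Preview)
Your proposal is correct and follows exactly the route the paper intends: the corollary is stated in the paper as an immediate consequence of Theorem \ref{thm_intro:main}, with no separate proof given, and what you have written is precisely the unpacking of that ``immediate consequence'' --- pull back along the quasi-\'etale cover, compute $K_{f^{-1}\sG}$ on the product using $K_Y\sim_{\mathbb{Z}}0$ and $\det\sH\cong\sO_Z$, and descend via the norm. The only cosmetic refinement would be to note that the short exact sequence in case~(2) is a priori valid on the big open set where everything is locally free, which suffices for the Weil-divisor identity $K_{f^{-1}\sG}=\pi_Y^{*}K_Y+\pi^{*}K_{\sH}$ by reflexivity.
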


Together with the foliated analogue of the minimal program for rank two foliations with F-dlt singularities on normal projective threefolds (see \cite{spicer}, \cite{cascini_spicer} and \cite{spicer_svaldi}), we obtain the following result.

\begin{cor}\label{cor_intro:main}
Let $X$ be a normal complex projective threefold, and let $\sG$ be a codimension one
foliation on $X$ with canonical singularities. Suppose furthermore that $\nu(K_\sG) = 0$.
Then one of the following holds.
\begin{enumerate}
\item There exist a smooth complete curve $C$, a complex projective variety $Y$ with canonical singularities and 
$K_Y \sim_\mathbb{Z}0$, as well as a generically finite rational map $f \colon Y \times C \map X$ such that $f^{-1}\sG$ is induced by the projection $Y \times C \to C$.
\item There exist a smooth complete curve $C$ of genus one, a complex projective surface $Z$ with canonical singularities, as well as a generically finite rational map $f \colon C \times Z \map X$ and a foliation $\sH\cong \sO_Z$ on $Z$ such that $f^{-1}\sG$ is the pull-back of $\sH$ via the projection $C \times Z \to Z$. In addition, $Z$ is an equivariant compactification of a commutative algebraic group and $\sH$ is induced by a one dimensional Lie subgroup.
\end{enumerate}
\end{cor}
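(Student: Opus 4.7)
The strategy is to run the foliated minimal model program for rank two foliations on threefolds in order to reduce the statement to Theorem~\ref{thm_intro:main}. Starting from $(X,\sG)$ as in the hypothesis, I would first pass to an F-dlt modification $\pi\colon(X_0,\sG_0)\to(X,\sG)$ using the construction in \cite{cascini_spicer}. Since $\sG$ is canonical and $\pi$ is crepant for $\sG$, the foliation $\sG_0$ is canonical and satisfies $\nu(K_{\sG_0})=0$. I would then run a $K_{\sG_0}$-MMP following \cite{spicer,cascini_spicer,spicer_svaldi}. No Mori fiber space output can occur, since $K_{\sG_0}$ is numerically non-negative on every curve by our numerical dimension hypothesis. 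Hence the program terminates with a birational map $\phi\colon X_0\dashrightarrow X'$ to a normal projective threefold $X'$ carrying a foliation $\sG'$ with $K_{\sG'}$ nef. Since each MMP step is $\sG$-crepant, $\nu(K_{\sG'})=\nu(K_{\sG_0})=0$, and combined with nefness this yields $K_{\sG'}\equiv 0$.

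The next step is to apply Theorem~\ref{thm_intro:main} to $(X',\sG')$. For this I need $X'$ to have klt singularities, whereas the foliated MMP only guarantees F-dlt. The main technical input will be to argue that, after the MMP, the non-klt locus of $X'$ is $\sG'$-invariant and then in fact empty, exploiting that $\sG'$ has canonical singularities with $K_{\sG'}\equiv 0$; alternatively one could perform a further $\sG'$-crepant modification internal to the F-dlt class. Granting this, Theorem~\ref{thm_intro:main} produces a quasi-\'etale cover $g\colon W\to X'$ from a product $W$ together with the desired description of $g^{-1}\sG'$ as the pullback of a foliation from one of the factors.

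A dimension count then translates the two cases of Theorem~\ref{thm_intro:main} into the two cases of Corollary~\ref{cor_intro:main}. In case (1), $W=Y\times C$ with $C$ a smooth curve forces $\dim Y=2$. In case (2), $W=Y\times Z$ with $\sH$ of rank $\dim Z-1$ on $Z$; the pulled-back foliation has rank $\dim Y+\dim Z-1$, which must equal $2$, and $\dim Y+\dim Z=3$, so necessarily $\dim Y=1$ and $\dim Z=2$. Thus $Y$ is a smooth curve with $K_Y\sim_\mathbb{Z}0$, hence an elliptic curve, and $Z$ is a projective surface with canonical singularities that is an equivariant compactification of a two-dimensional commutative algebraic group, with $\sH$ induced by a one-dimensional Lie subgroup. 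Composing $g$ with the birational map $\pi\circ\phi^{-1}\colon X'\dashrightarrow X$ produces the required generically finite rational map $W\dashrightarrow X$, which is generically finite because $g$ is quasi-\'etale and $\pi\circ\phi^{-1}$ is birational.

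The main obstacle I anticipate is precisely the gap between F-dlt and klt: the foliated MMP delivers F-dlt singularities on $X'$, whereas Theorem~\ref{thm_intro:main} is stated under the klt hypothesis. Bridging this gap using the canonicity of $\sG'$ together with $K_{\sG'}\equiv 0$ to control the non-klt locus is the principal technical step; once this is in place, the rest of the argument is a combination of MMP termination, direct application of Theorem~\ref{thm_intro:main}, and a straightforward dimension count.
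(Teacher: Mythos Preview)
Your overall strategy matches the paper's proof exactly: take an F-dlt modification via \cite{cascini_spicer}, run the foliated MMP to a model with nef foliated canonical class, deduce $K_{\sG'}\equiv 0$, apply Theorem~\ref{thm_intro:main}, and read off the dimensions. However, two of your justifications are wrong, and your ``main obstacle'' is based on a misunderstanding.

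First, the steps of the foliated MMP are not $\sG$-crepant; they are $K_{\sG_0}$-\emph{negative}. The equality $\nu(K_{\sG'})=\nu(K_{\sG_0})$ does not come from crepancy but from \cite[Proposition V.2.7]{nakayama04}, which controls numerical dimension under such $K$-negative birational maps. Relatedly, your reason for excluding a Mori fibre space is incorrect: $\nu(K_{\sG_0})=0$ does \emph{not} say $K_{\sG_0}$ is non-negative on every curve. The correct argument is simply that $\nu(K_{\sG_0})\ge 0$ forces $K_{\sG_0}$ to be pseudo-effective, so the MMP terminates with a minimal model.

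Second, and more importantly, your ``main obstacle'' is not an obstacle at all. F-dlt is a condition on the \emph{foliation}, not on the ambient variety. The F-dlt modification of \cite[Theorem 8.1]{cascini_spicer} already yields $X_0$ with $\mathbb{Q}$-factorial klt singularities, and the foliated MMP of \cite{spicer,cascini_spicer,spicer_svaldi} preserves this: the output $X'$ is automatically $\mathbb{Q}$-factorial klt. So Theorem~\ref{thm_intro:main} applies directly, with no need to ``control the non-klt locus''. What you \emph{do} need to check, and the paper does via Lemma~\ref{lemma:singularities_birational_morphism}, is that $\sG'$ remains canonical; this follows because the map $\phi$ is $K_{\sG_0}$-negative.
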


This paper is a sequel to the article \cite{cd1fzerocan} by the first-named author and follows the same general strategy. The main new (crucial) ingredients are Propositions \ref{prop:transverse} and \ref{prop:general_type}. In order to prove these results, one needs to extend the Baum-Bott formula and the Camacho-Sad formula to surfaces with klt singularities. 

\subsection*{Structure of the paper} In section 2,  we recall the definitions and basic properties of foliations. We also establish some Bertini-type results. In section 3, we extend a number of earlier results to the context of quasi-projective varieties with quotient singularities. In particular, we extend the Baum-Bott formula as well as the Camacho-Sad formula to this context.
Section 4 prepare for the proof of our main result. We confirm the Ekedahl-Shepherd-Barron-Taylor conjecture for mildly singular codimension one foliations with trivial canonical class on projective varieties $X$ with $\nu(X)=-\infty$ in Section 5, and then on those with $\nu(X)=1$ in Section 6. In section 7, we address codimension one foliations with numerically  trivial  canonical  class  defined  by  closed  twisted rational $1$-forms. Section 8 is devoted to the proof of Theorem \ref{thm_intro:main} and Corollary \ref{cor_intro:main}.

\subsection*{Acknowledgements} We would like to thank Frank Loray, Jorge  V.  Pereira  and
Fr\'ed\'eric Touzet for answering our questions concerning their paper \cite{lpt}. We are very grateful to Jorge  V.  Pereira who explained to us the proof of Proposition \ref{prop:bertini}. 

The first-named author was partially supported by the ERC project ALKAGE (ERC grant Nr 670846), the CAPES-COFECUB project Ma932/19 and the ANR project Foliage (ANR grant Nr ANR-16-CE40-0008-01).

\subsection*{Global conventions} Throughout the paper we work over the complex number field.  

Given a variety $X$, we denote by $X_\textup{reg}$ its smooth locus.

We will use the notions of terminal, canonical, klt, and lc singularities for pairs
without further explanation or comment and simply refer to \cite[Section 2.3]{kollar_mori} for a discussion and for their precise definitions.

Given a normal variety $X$, $m\in \mathbb{N}_{>0}$, and coherent sheaves $\sE$ and $\sG$ on $X$, write
$\sE^{[m]}:=(\sE^{\otimes m})^{**}$, 
$\det\sE:=(\Lambda^{\textup{rank} \,\sE}\sE)^{**}$, and 
$\sE \boxtimes \sG := (\sE \otimes\sG)^{**}.$
Given any morphism $f \colon Y \to X$, write 
$f^{[*]}\sE:=(f^*\sE)^{**}.$

\section{Foliations}

In this section, we have gathered a number of results and facts concerning foliations which will later be used in the proofs.

\begin{defn}

A \emph{foliation} on  a normal variety $X$ is a coherent subsheaf $\sG\subseteq T_X$ such that
\begin{enumerate}
\item $\sG$ is closed under the Lie bracket, and
\item $\sG$ is saturated in $T_X$. In other words, the quotient $T_X/\sG$ is torsion-free.
\end{enumerate}

The \emph{rank} $r$ of $\sG$ is the generic rank of $\sG$.
The \emph{codimension} of $\sG$ is defined as $q:=\dim X-r$. 

\medskip

The \textit{canonical class} $K_{\sG}$ of $\sG$ is any Weil divisor on $X$ such that  $\sO_X(-K_{\sG})\cong \det\sG$. 

\medskip

Let $X^\circ \subseteq X_{\textup{reg}}$ be the open set where $\sG_{|X_{\textup{reg}}}$ is a subbundle of $T_{X_{\textup{reg}}}$. The \textit{singular locus} of $\sG$ is defined to be $X \setminus X^\circ$.
A \emph{leaf} of $\sG$ is a maximal connected and immersed holomorphic submanifold $L \subset X^\circ$ such that
$T_L=\sG_{|L}$. A leaf is called \emph{algebraic} if it is open in its Zariski closure.

The foliation $\sG$ is said to be \emph{algebraically integrable} if its leaves are algebraic.
\end{defn}

\subsection{Foliations defined by $q$-forms} \label{q-forms}
Let $\sG$ be a codimension $q$ foliation on an $n$-dimensional normal variety $X$.
The \emph{normal sheaf} of $\sG$ is $\sN:=(T_X/\sG)^{**}$.
The $q$-th wedge product of the inclusion
$\sN^*\into \Omega^{[1]}_X$ gives rise to a non-zero global section 
 $\omega\in H^0(X,\Omega^{q}_X\boxtimes \det\sN)$
 whose zero locus has codimension at least two in $X$. 
Moreover, $\omega$ is \emph{locally decomposable} and \emph{integrable}.
To say that $\omega$ is locally decomposable means that, 
in a neighborhood of a general point of $X$, $\omega$ decomposes as the wedge product of $q$ local $1$-forms 
$\omega=\omega_1\wedge\cdots\wedge\omega_q$.
To say that it is integrable means that for this local decomposition one has 
$d\omega_i\wedge \omega=0$ for every  $i\in\{1,\ldots,q\}$. 
The integrability condition for $\omega$ is equivalent to the condition that $\sG$ 
is closed under the Lie bracket.

Conversely, let $\sL$ be a reflexive sheaf of rank $1$ on $X$, and let
$\omega\in H^0(X,\Omega^{q}_X\boxtimes \sL)$ be a global section
whose zero locus has codimension at least two in $X$.
Suppose that $\omega$  is locally decomposable and integrable.
Then  the kernel
of the morphism $T_X \to \Omega^{q-1}_X\boxtimes \sL$ given by the contraction with $\omega$
defines 
a foliation of codimension $q$ on $X$. 
These constructions are inverse of each other. 

\subsection{Foliations described as pull-backs} \label{pullback_foliations}
Let $X$ and $Y$ be normal varieties, and let $\varphi\colon X\map Y$ be a dominant rational map that restricts to a morphism $\varphi^\circ\colon X^\circ\to Y^\circ$,
where $X^\circ\subseteq X$ and  $Y^\circ\subseteq Y$ are smooth open subsets.

Let $\sG$ be a codimension $q$ foliation on $Y$. Suppose that the restriction $\sG^\circ$ of $\sG$ to $Y^\circ$ is
defined by a twisted $q$-form
$\omega_{Y^\circ}\in H^0(Y^\circ,\Omega^{q}_{Y^\circ}\otimes \det\sN_{\sG^\circ}).$
Then $\omega_{Y^\circ}$ induces a non-zero twisted $q$-form 
$$\omega_{X^\circ}:= d\phi^\circ(\omega_{Y^\circ})\in 
H^0\big(X^\circ,\Omega^{q}_{X^\circ}\otimes (\varphi^\circ)^*({\det\sN_\sG}_{|Y^\circ})\big)$$ which defines a codimension $q$ foliation $\sE^\circ$ on $X^\circ$. 
\emph{The pull-back $\varphi^{-1}\sG$ of $\sG$ via $\varphi$} is the foliation on $X$ 
whose restriction to $X^\circ$ is $\sE^\circ$. We will also write $\sG_{|X}$ instead of $\varphi^{-1}\sG$.

\subsection{Singularities of foliations} Recently, notions of singularities coming from the minimal model program have shown to be very useful  when studying birational geometry of foliations. We refer the reader to \cite[Section I]{mcquillan08} for an in-depth discussion.
Here we only recall the notion of canonical foliation following McQuillan (see \cite[Definition I.1.2]{mcquillan08}).

\begin{defn}\label{definition:canonical_singularities}
Let $\sG$ be a foliation on a normal complex variety $X$. Suppose that $K_\sG$ is $\mathbb{Q}$-Cartier.
Let $\beta\colon Z \to X$ be a projective birational morphism. 
Then there are uniquely defined rational numbers $a(E,X,\sG)$ such that
$$
K_{\beta^{-1}{\sG}}\sim_\mathbb{Q}\beta^*K_{\sG}+ \sum_E a(E,X,\sG)E,
$$
where $E$ runs through all exceptional prime divisors for $\beta$.
The rational numbers $a(E,X,\sG)$ do not depend on the birational morphism $\beta$,
but only on the valuations associated to the $E$. 
We say that $\sG$ is \textit{canonical} if, for all $E$ exceptional over $X$, $a(E,X,\sG) \ge 0$.
\end{defn}

We finally recall the behaviour of canonical singularities with respect to birational maps and finite covers.

\begin{lemma}[{\cite[Lemma 4.2]{cd1fzerocan}}]\label{lemma:singularities_birational_morphism}
Let $\beta\colon Z \to X$ be a birational projective morphism of normal complex varieties, and let $\sG$ be a foliation on $X$.
Suppose that $K_\sG$ is $\mathbb{Q}$-Cartier.
\begin{enumerate}
\item Suppose that $K_{\beta^{-1}\sG}\sim_\mathbb{Q} \beta^*K_\sG+E$ for some effective $\beta$-exceptional 
$\mathbb{Q}$-divisor on $Z$. If 
$\beta^{-1}\sG$ is canonical, then so is $\sG$.
\item If $K_{\beta^{-1}\sG}\sim_\mathbb{Q} \beta^*K_\sG$, then $\sG$ is canonical if and only if so is $\beta^{-1}\sG$.
\end{enumerate}
\end{lemma}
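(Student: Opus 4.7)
The plan is to compute discrepancies by passing to a common birational model dominating $Z$. Given any prime divisor $F$ exceptional over $X$, I choose a projective birational morphism $\gamma \colon W \to Z$ such that $F$ appears as a prime divisor on $W$, and set $\alpha := \beta \circ \gamma \colon W \to X$. Since $F$ is exceptional over $X$ it is also $\alpha$-exceptional, and $a(F,X,\sG)$ is by definition the coefficient of $F$ in $K_{\alpha^{-1}\sG} - \alpha^* K_\sG$.

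The key identity comes from combining two discrepancy formulas. Applied to $\gamma$ and the foliation $\beta^{-1}\sG$ one has
\[K_{\alpha^{-1}\sG} \;\sim_\mathbb{Q}\; \gamma^* K_{\beta^{-1}\sG} + \sum_{F''} a(F'', Z, \beta^{-1}\sG)\, F'',\]
where $F''$ runs over prime divisors on $W$ that are $\gamma$-exceptional. Using the hypothesis $K_{\beta^{-1}\sG}\sim_\mathbb{Q} \beta^* K_\sG + E$ and $\gamma^*\beta^* = \alpha^*$, this rewrites as
\[K_{\alpha^{-1}\sG} \;\sim_\mathbb{Q}\; \alpha^* K_\sG + \gamma^* E + \sum_{F''} a(F'', Z, \beta^{-1}\sG)\, F''.\]
Reading off the coefficient of $F$ on both sides gives
\[a(F, X, \sG) \;=\; (\text{coeff of } F \text{ in } \gamma^* E) \;+\; \delta\cdot a(F, Z, \beta^{-1}\sG),\]
with $\delta=1$ if $F$ is $\gamma$-exceptional and $\delta=0$ otherwise.

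For assertion (1), I split on whether $F$ is $\gamma$-exceptional. If it is, then both terms on the right are nonnegative: $\gamma^*E$ is effective because $E$ is, and $a(F,Z,\beta^{-1}\sG)\ge 0$ by canonicity of $\beta^{-1}\sG$. If $F$ is not $\gamma$-exceptional, then $F$ is the strict transform of a prime divisor $F_Z$ on $Z$; since $F$ is $\alpha$-exceptional, $F_Z$ is $\beta$-exceptional, the coefficient of $F$ in $\gamma^*E$ equals the (nonnegative) coefficient of $F_Z$ in $E$, and the second summand vanishes. In either case $a(F, X, \sG)\ge 0$, so $\sG$ is canonical.

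For assertion (2), I specialize the identity above to $E=0$. Assuming $\sG$ is canonical and taking $F$ exceptional over $Z$, I choose $W$ so that $F$ is $\gamma$-exceptional; then $F$ is also $\alpha$-exceptional, $\delta=1$, and the identity collapses to $a(F, Z, \beta^{-1}\sG) = a(F, X, \sG)\ge 0$, proving $\beta^{-1}\sG$ is canonical. The reverse implication is a direct application of (1) with $E=0$. I do not anticipate a substantive obstacle here: the whole argument is a careful bookkeeping of discrepancies on a common model, the only mild subtlety being the case distinction according to whether a divisor $F$ over $X$ has image of codimension $\ge 2$ in $Z$ or descends to a $\beta$-exceptional divisor on $Z$.
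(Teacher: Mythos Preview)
Your argument is correct and is the standard discrepancy bookkeeping on a common resolution, exactly as one proves the analogous statement for singularities of pairs (cf.\ \cite[Lemma~2.30]{kollar_mori}). Note that the present paper does not give its own proof of this lemma but simply cites \cite[Lemma~4.2]{cd1fzerocan}; your write-up is a faithful reconstruction of that argument. One small point worth making explicit: in part~(1), the hypothesis that $\beta^{-1}\sG$ is canonical already entails (by Definition~\ref{definition:canonical_singularities}) that $K_{\beta^{-1}\sG}$ is $\mathbb{Q}$-Cartier, which is what allows you to write the discrepancy formula for $\gamma$; you use this implicitly and it is harmless, but it is the only place where a reader might pause.
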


\begin{lemma}[{\cite[Lemma 4.3]{cd1fzerocan}}]\label{lemma:canonical_quasi_etale_cover}
Let $f\colon X_1 \to X$ be a quasi-finite dominant morphism of normal complex varieties, and let $\sG$ be a foliation on $X$ with
$K_\sG$ $\mathbb{Q}$-Cartier. Suppose that any codimension one component of the branch locus of $f$ is 
$\sG$-invariant. If $\sG$ is canonical, then so is $f^{-1}\sG$.
\end{lemma}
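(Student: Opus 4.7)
The approach is to first derive the identity $K_{f^{-1}\sG} \sim_\bQ f^*K_\sG$ from the hypothesis, and then use it on compatible birational models to compare discrepancies. Since both the canonical class of $\sG$ and the notion of canonicity depend only on the codimension-one behaviour of $f$, by Zariski's main theorem one may reduce to the case in which $f$ is finite.

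The main technical input is the ramification formula
\[
K_{g^{-1}\sH} \sim_\bQ g^*K_\sH + \sum_{D_1}(e_{D_1}-1)D_1,
\]
valid for any finite dominant morphism $g\colon Y_1 \to Y$ of normal varieties and any codimension-$q$ foliation $\sH$ on $Y$ with $\bQ$-Cartier canonical class, where $D_1$ runs over those codimension-one components of the ramification divisor of $g$ that are \emph{not} $g^{-1}\sH$-invariant, and $e_{D_1}$ denotes the corresponding ramification index. To verify this, one argues on the open subset $U \subseteq Y$ where $Y$ is smooth, $\sH$ is a subbundle of $T_Y$, and $g$ is flat. On $U$ the foliation $\sH$ is defined by a twisted $q$-form $\omega \in H^0(U,\Omega_U^q \otimes \det \sN_\sH|_U)$, and $g^{-1}\sH$ is defined by the saturation of $g^*\omega$; a local computation in coordinates adapted to a ramification divisor $D_1$ above $D=g(D_1)$ shows that $g^*\omega$ acquires an extra factor vanishing to order $e_{D_1}-1$ along $D_1$ precisely when $D$ is not $\sH$-invariant, whereas if $D$ is $\sH$-invariant this factor is absorbed by the reflexive pull-back of $\det \sN_\sH$. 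Applied to the map $f$, our hypothesis forces the right-hand sum to vanish, yielding $K_{f^{-1}\sG} \sim_\bQ f^*K_\sG$.

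To compare discrepancies, given a projective birational morphism $\beta_1\colon Z_1 \to X_1$ from a normal variety and a $\beta_1$-exceptional prime divisor $E_1 \subset Z_1$, one constructs (after replacing $Z_1$ by a suitable higher model) a commutative diagram
\[
\begin{tikzcd} Z_1 \ar[r,"\widetilde f"] \ar[d,"\beta_1"'] & Z \ar[d,"\beta"] \\ X_1 \ar[r,"f"'] & X \end{tikzcd}
\]
with $Z$ normal, $\beta$ projective birational, and $\widetilde f$ finite dominant. Since $f$ is quasi-finite, $E := \widetilde f(E_1)$ is a prime divisor on $Z$ which is $\beta$-exceptional. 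Combining the defining identities for discrepancies with the ramification formula applied to $\widetilde f$ and the identity $f^*K_\sG \sim_\bQ K_{f^{-1}\sG}$, one obtains
\[
\sum_{E_1'} a(E_1', X_1, f^{-1}\sG)\,E_1' \;=\; \widetilde f^*\!\sum_{E'} a(E',X,\sG)\,E' \;+\; R,
\]
where the right-hand sum is over $\beta$-exceptional primes and $R\ge 0$ is the non-invariant ramification divisor of $\widetilde f$. Reading off the coefficient of $E_1$ gives
\[
a(E_1,X_1,f^{-1}\sG) \;=\; m\cdot a(E,X,\sG) + r_{E_1} \;\ge\; 0,
\]
with $m\ge 1$ the ramification index of $\widetilde f$ along $E_1$ and $r_{E_1}\in\{0,m-1\}$; both terms are non-negative because $\sG$ is canonical. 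Hence $f^{-1}\sG$ is canonical.

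The main obstacle is the verification of the ramification formula on (possibly singular) normal varieties: the dichotomy between invariant and non-invariant branch divisors for the order of vanishing of $g^*\omega$ is the substance of the local computation and requires careful tracking of the reflexive pull-back of $\det \sN_\sH$; once this is in hand the global comparison of discrepancies reduces, as above, to a formal manipulation on compatible birational models.
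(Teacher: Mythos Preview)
The paper does not prove this lemma; it is cited from \cite[Lemma 4.3]{cd1fzerocan}. Your argument is the expected one and is correct in substance: the foliated Riemann--Hurwitz identity gives $K_{f^{-1}\sG}\sim_\bQ f^*K_\sG$ under the invariance hypothesis, and the comparison of discrepancies on compatible models then yields $a(E_1,X_1,f^{-1}\sG)=m\cdot a(E,X,\sG)+r_{E_1}\ge 0$ exactly as you write.

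Two remarks. First, the reduction to finite $f$ via Zariski's main theorem is unnecessary and, as stated, slightly delicate: the finite extension $\bar f\colon \bar X_1\to X$ may acquire new codimension-one branch components arising from sheets in $\bar X_1\setminus X_1$, for which no invariance is assumed. It is cleaner to run your construction directly for quasi-finite $f$: given an exceptional valuation $v_1$ on $X_1$, restrict to $v$ on $X$, realize $v$ as a divisor on some model $\beta\colon Z\to X$, and take $Z_1$ to be the normalization of $Z\times_X X_1$; then $\widetilde f\colon Z_1\to Z$ is quasi-finite and the foliated Riemann--Hurwitz formula (being local in codimension one) still applies, while the non-exceptional part of the non-invariant ramification $R$ of $\widetilde f$ vanishes by the hypothesis on $f$. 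Second, the step where you read off coefficients from
\[
\sum_{E_1'} a(E_1')\,E_1' \;\sim_\bQ\; \widetilde f^{\,*}\!\sum_{E'} a(E')\,E' + R
\]
uses that two $\beta_1$-exceptional $\bQ$-divisors which are $\bQ$-linearly equivalent are in fact equal; this follows from the negativity lemma and is worth making explicit.
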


\subsection{Bertini-type results}

The present paragraph is devoted to the following auxiliary results.

\begin{lemma}\label{lemma:bertini_2}
Let $\beta\colon X \to Y$ be a birational morphism of smooth quasi-projective varieties with $n:=\dim X \ge 3$.
Let $\sG$ be a codimension one foliation on $X$ given by a twisted $1$-form $\omega\in H^0(X,\Omega_X^1\otimes\sL)$. Suppose that any $\beta$-exceptional prime divisor on $X$ dominates a codimension $2$ closed subset in $X$. 
If $A$ is a general member of a very ample linear system $|A|$ on $Y$, then the zero set of the induced twisted $1$-form $\omega_{H}\in H^0(H,\Omega_H^1\otimes\sL_{|H})$ on $H:=\beta^{-1}(A)$ has codimension at least two. 
\end{lemma}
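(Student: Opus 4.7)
The plan is to decompose the zero set as $Z(\omega_H)=(Z(\omega)\cap H)\cup T$, where $T:=\{p\in H\setminus Z(\omega):T_pH=\sG_p\}$ is the tangency locus of $\sG$ with $H$, and to show each piece has codimension at least two in $H$ for a general $A\in|A|$. The main obstacle will be controlling $T$.

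For $Z(\omega)\cap H$, observe that $Z(\omega)$ has codimension at least two in $X$ and thus contains no divisor, in particular no $\beta$-exceptional divisor. Hence every irreducible component $Z_\alpha\subseteq Z(\omega)$ satisfies $\dim\beta(Z_\alpha)\le\dim Z_\alpha\le n-2$. Since $|A|$ is base point free, a general $A$ meets $\beta(Z_\alpha)$ in codimension one, so $\dim(A\cap\beta(Z_\alpha))=\dim\beta(Z_\alpha)-1$; a generic fiber dimension count for $\beta|_{Z_\alpha}$ then yields $\dim(Z_\alpha\cap H)=\dim Z_\alpha-1\le n-3$, giving codimension at least two in $H$.

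For the tangency locus $T$, on the open subset $H^\circ\subseteq H$ over which $\beta$ is an isomorphism, $\sG$ descends to a codimension one foliation $\sG'$ on $Y\setminus\beta(\Exc(\beta))$; since the complement has codimension at least two, $\sG'$ extends uniquely to a codimension one foliation on $Y$. At $p\in H^\circ$, the condition $T_pH=\sG_p$ corresponds under $d\beta$ to $T_{\beta(p)}A=\sG'_{\beta(p)}$. The crucial input is a Bertini-type statement for codimension one foliations: for $A$ general in the very ample $|A|$, the tangency locus $\{a\in A:T_aA=\sG'_a\}$ has codimension at least two in $A$. This is the main obstacle; it follows from an incidence/dimension count in $Y\times|A|$, since for fixed $a\in Y$ the two conditions $a\in A$ and $T_aA=\sG'_a$ impose $1+(n-1)=n$ independent linear conditions on the defining section of $A$, forcing the tangency locus on a generic $A$ to be zero-dimensional and hence of codimension $n-1\ge 2$ in $A$.

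Finally, along $H\cap\Exc(\beta)$, each exceptional prime divisor $E\subseteq X$ has $\beta(E)$ of codimension two in $Y$ by hypothesis, so $E\cap H$ is a divisor in $H$. A parallel incidence count in $E\times|A|$ shows that, for generic $A$, the locus of $p\in E\cap\beta^{-1}(A)$ satisfying $T_pH=\sG_p$ is zero-dimensional (the containment $\ker d\beta_p\subseteq T_pH$ is automatic, and the remaining conditions still impose enough linear constraints on the section defining $A$); hence it has codimension at least two in $H$. Combining the three estimates establishes the lemma.
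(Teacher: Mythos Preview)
Your strategy---decompose $Z(\omega_H)$ into $Z(\omega)\cap H$ and the tangency locus, then run an incidence count---is exactly what the paper does, only the paper packages everything into a single incidence variety $I^\circ=\{(x,H)\in X^\circ\times|H|:x\in H,\ \sG_x\subseteq T_xH\}$ over a suitable big open set $X^\circ$. The non-exceptional part of your argument is fine. The gap is in your treatment of the exceptional locus.

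The sentence ``the remaining conditions still impose enough linear constraints on the section defining $A$'' hides the entire difficulty. For $p\in E$, the tangency condition $\sG_p\subseteq T_pH$ is equivalent to $d\beta_p(\sG_p)\subseteq T_{\beta(p)}A$, and this imposes exactly $\dim d\beta_p(\sG_p)$ linear conditions on the section. Since $\dim E=n-1$, you need at least $n-1$ conditions in total, hence $\dim d\beta_p(\sG_p)\ge n-2$, hence $\textup{rank}\,d\beta_p\ge n-1$ at a general point of $E$. The hypothesis that $\beta(E)$ has codimension two only yields $\textup{rank}\,d\beta_p|_{T_pE}=n-2$ by generic smoothness of $\beta|_E$; it does not by itself rule out $\textup{rank}\,d\beta_p=n-2$ on all of $T_pX$. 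The paper supplies the missing step: there exists a composition $Z\to Y$ of blow-ups along smooth centers such that $Z\dashrightarrow X$ is a morphism; the strict transform of $E$ in $Z$ is birational to the exceptional divisor of one of these blow-ups, and since its image in $Y$ has dimension $n-2$, the corresponding center must have codimension exactly two, so the differential there has corank one. Pulling back through the local isomorphism $Z\to X$ at a general point of $E$ gives $\textup{rank}\,d\beta_p=n-1$. Without this input your incidence count on $E$ does not close, and the hypothesis on the codimension of $\beta(E)$ is never actually used in your argument.
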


\begin{proof}Set $|H|:=\beta^*|A|$. Recall that there exists a composition $Z \to Y$ of a finite number of blow-ups with smooth centers such that the induced rational map $Z \map X$ is a morphism. This immediately implies that $d\beta$ has rank $n-1$ at the generic point of any $\beta$-exceptional divisor. 
Let $X^\circ$ be an open set in $X$ with complement of codimension at least $2$ such that $\beta^{-1}\sG$ is regular on $X^\circ$ and such that $d\beta_x(\sG_x)$ has rank at least $n-2$ at any point on some $\beta$-exceptional divisor in $X^\circ$.
Consider 
$$
I^\circ=\left\{(x,H)\in X^\circ \times |H|\text{ such that } x\in H\text{ and }\sG_x \subseteq T_xH\right\}.
$$
We denote by $p\colon I^\circ \to X^\circ$ the projection. If $x \in X^\circ \setminus \textup{Exc}(\beta)$, then $p^{-1}(x) \subset |H|$ is a linear subspace of dimension $\dim |H|-n$.
If $x \in \textup{Exc}(\beta)$, then $p^{-1}(x) \subset |H|$ is a linear subspace of dimension at most $\dim |H|-(n-1)$ since $d\beta_x(\sG_x)$ has rank at least $n-2$ by choice of $X^\circ$.
It follows that any irreducible component of $I^\circ$ has dimension at most $\dim |H|$. Thus general fibers of the second projection 
$q \colon I^\circ \to |H|$ have dimension $\le 0$. Our claim then follows easily.
\end{proof}

\begin{lemma}\label{lemma:bertini}
Let $\psi\colon X \to Y$ be a dominant and equidimensional morphism of smooth quasi-projective varieties with reduced fibers.
Suppose in addition that $\dim Y = \dim X -1 \ge 2$. Let $\sG$ be a codimension one foliation on $X$ given by a twisted $1$-form $\omega\in H^0(X,\Omega_X^1\otimes\sL)$. Suppose that the generic fiber of $\psi$ is not tangent to $\sG$.
If $A$ is a general member of a very ample linear system on $Y$, then the zero set of the induced twisted $1$-form $\omega_{H}\in H^0(H,\Omega_H^1\otimes\sL_{|H})$ 
on $H:=\psi^{-1}(A)$ has codimension at least two.
\end{lemma}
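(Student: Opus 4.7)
My plan is to adapt the incidence-variety argument of Lemma~\ref{lemma:bertini_2} to this setting. Write $n:=\dim X$, so $\dim Y=n-1$ and the fibers of $\psi$ are curves, and for each $A\in|A|$ let $H_A:=\psi^{-1}(A)$. The first step is to analyze when the restricted form $\omega_{H_A}$ can vanish: at a point $x$ where $\psi$ is smooth (so $T_xF:=\ker d\psi_x$ is one-dimensional, where $F$ denotes the fiber through $x$) and $\omega_x\ne 0$, the subspace $T_xH_A = d\psi_x^{-1}(T_{\psi(x)}A)\subseteq T_xX$ has dimension $n-1$ and contains $T_xF$; since $\sG_x$ also has dimension $n-1$, the vanishing $\omega_{H_A}|_x = 0$ is equivalent to $T_xH_A = \sG_x$, which in turn forces both (i) $T_xF\subseteq \sG_x$ and (ii) $T_{\psi(x)}A = d\psi_x(\sG_x)$.

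I next control three bad loci whose union contains $Z(\omega_H)$ for $H=H_A$ with $A$ general. First, $Z(\omega)$ has codimension at least two in $X$; since $\psi(Z(\omega))\subsetneq Y$, a standard Bertini-type argument for the base-point-free linear system $\psi^*|A|$ gives $\dim(Z(\omega)\cap H)\le \dim Z(\omega) - 1\le n-3$ for general $A$. Second, let $E:=\Sing(\psi)$. The morphism $\psi$ is flat by miracle flatness ($X$ is Cohen-Macaulay, $Y$ is regular, fibers are equidimensional), and reducedness of the fibers forces the singular locus of each $F_y$ to be finite, so $E\to Y$ has finite fibers; combined with the fact that $\psi$ is generically smooth in characteristic zero (so $\psi(E)\subsetneq Y$), this yields $\dim E\le n-2$ and hence $\dim(E\cap H)\le n-3$. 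Third, on the open subset $X^\circ\subseteq X$ where $\psi$ is smooth and $\omega$ is non-vanishing, set $T:=\{x\in X^\circ : T_xF\subseteq \sG_x\}$; the hypothesis that the generic fiber of $\psi$ is not tangent to $\sG$ implies $T\cap F_y$ is a proper, hence finite, subset of $F_y$ for general $y\in Y$, whence $\dim T\le n-1$.

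The incidence argument is then as follows. Consider
\[
I := \{(x,A)\in X^\circ\times |A| : \psi(x)\in A \text{ and } T_xH_A\subseteq \sG_x\},
\]
with projections $p\colon I\to X^\circ$ and $q\colon I\to |A|$. By the first paragraph, $p^{-1}(x)=\emptyset$ for $x\notin T$; for $x\in T$, the hyperplane $d\psi_x(\sG_x)\subseteq T_{\psi(x)}Y$ is determined by $x$, and $p^{-1}(x)$ is the linear subspace of $|A|$ defined by $\psi(x)\in A$ and $T_{\psi(x)}A = d\psi_x(\sG_x)$, of codimension $1+(n-2)=n-1$ (using very ampleness of $|A|$ on the smooth variety $Y$). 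Hence
\[
\dim I \le \dim T + \dim|A| - (n-1) \le \dim |A|,
\]
so $\omega_H$ vanishes at only finitely many points of $X^\circ\cap H$ for general $A$. Combined with the bounds on $Z(\omega)\cap H$ and $E\cap H$, this gives $\dim Z(\omega_H)\le n-3$, i.e.\ codimension at least two in $H$.

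The main delicate point is the codimension estimate for $p^{-1}(x)$: one must verify that, for very ample $|A|$ on the smooth variety $Y$, imposing simultaneously a base point $y\in Y$ and a prescribed tangent hyperplane of $A$ at $y$ cuts $|A|$ in the expected codimension $n-1$. This follows from the fact that the projective embedding $Y\hookrightarrow \mathbb{P}^N$ given by $|A|$ has injective differential at every point, so every hyperplane of $T_yY$ is realized as $T_yA$ for some $A\in |A|$ containing $y$. The two places where the geometric hypotheses are used in an essential way are the reducedness of the fibers of $\psi$ (yielding $\dim E\le n-2$) and the non-tangency of the generic fiber to $\sG$ (yielding $\dim T\le n-1$).
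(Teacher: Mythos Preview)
Your proof is correct and follows exactly the approach the paper intends: the paper's own proof is the one-line remark ``This also follows from an easy dimension count (see proof of Lemma~\ref{lemma:bertini_2}),'' and you have carried out precisely this incidence-variety dimension count, adapted from the birational setting of Lemma~\ref{lemma:bertini_2} to the fibration setting here. Your handling of the three loci $Z(\omega)$, $\Sing(\psi)$, and the tangency locus $T$ is the natural analogue of the paper's decomposition into $X^\circ$ and $\Exc(\beta)$, and your codimension computation for $p^{-1}(x)$ via very ampleness is the expected one.
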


\begin{proof}
This also follows from an easy dimension count (see proof of Lemma \ref{lemma:bertini_2}).
\end{proof}

The proof of Proposition \ref{prop:bertini} below makes use of the following result of McQuillan.

\begin{prop}[{\cite[Facts I.1.8 and I.1.9]{mcquillan08}}]\label{prop:surface_singularity}
Let $\sL$ be a foliation of rank one on a smooth complex quasi-projective surface, and let $x$ be a singular point of $\sL$. Let also $v$ be a local generator of $\sL$ in a neighbourhood of $x$. Then $\sL$ is not canonical at $x$ if and only if the linear part $D_xv$ of $v$ at $x$ is nilpotent or diagonalizable with nonzero eigenvalues $\lambda$ and $\mu$ satisfying in addition $\frac{\lambda}{\mu}\in \mathbb{Q}_{>0}$. 
\end{prop}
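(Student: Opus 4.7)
The plan is to combine Seidenberg's reduction-of-singularities theorem for rank one foliations on smooth surfaces with an explicit discrepancy calculation under point blow-ups, essentially following McQuillan \cite{mcquillan08}.

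First I would record the local discrepancy formula. Let $\pi\colon \widetilde{S}\to S$ be the blow-up of $S$ at $x$ with exceptional divisor $E$, and let $\omega$ be a local holomorphic $1$-form annihilating $v$ and vanishing only at $x$. Writing $\pi^{\ast}\omega = x_E^{\,m}\,\widetilde{\omega}$, where $\widetilde{\omega}$ defines $\pi^{-1}\sL$ and has zero locus of codimension two, a direct computation yields $a(E,\sL) = 1 - m$. The integer $m$ equals the order of vanishing of $\omega$ (equivalently, of $v$) at $x$ unless the lowest homogeneous component of $v$ is proportional to the radial vector field---the dicritical configuration---in which case $m$ strictly exceeds this order.

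For the implication that $D_xv$ nilpotent, or semisimple with $\lambda/\mu\in\mathbb{Q}_{>0}$, forces $\sL$ to be non-canonical, I would analyse three subcases. If $D_xv\equiv 0$, then $v$ vanishes to order at least two at $x$ and the formula directly yields $a(E)\le -1$. If $D_xv$ is nilpotent and nonzero, a change of coordinates places $v$ in the form $y\partial_x + (\textup{higher order})$; a single blow-up then either already has $m\ge 2$, or produces a new singular point whose linear part is again zero or nilpotent nonzero, so, since Seidenberg's algorithm terminates, we eventually reach a blow-up with $m\ge 2$ and hence a strictly negative discrepancy. If $D_xv$ is semisimple with $\lambda/\mu=p/q\in\mathbb{Q}_{>0}$ in lowest terms, a calculation in the two affine charts of the blow-up shows that the eigenvalue pair $(p,q)$ is transformed into either $(p,q-p)$ or $(p-q,q)$ at the new singular point, mirroring the Euclidean algorithm; after finitely many steps we reach $p=q$, i.e.\ a radial singularity, and one further blow-up is then dicritical with $m=2$, yielding $a(E)=-1$.

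For the reverse implication I would verify that in each remaining case---$D_xv$ semisimple with $\lambda\mu\ne 0$ and $\lambda/\mu\notin\mathbb{Q}_{>0}$, of Jordan type with nonzero eigenvalue, or a saddle-node with exactly one zero eigenvalue---every blow-up above $x$ has $m=1$, hence discrepancy zero, and the new singularities that appear are again of one of these three types (together with possibly non-degenerate reduced singularities coming from Jordan blocks). Combined with Seidenberg's termination theorem, this gives $a(E,\sL)\ge 0$ for every exceptional divisor in any proper birational morphism resolving the singularity, so $\sL$ is canonical. The hardest step will be the Euclidean reduction combined with the bookkeeping of dicritical blow-ups: one must verify that no parasitic saturation inflates $m$ beyond the order of vanishing in the non-dicritical cases, and that the descending recursion on $(p,q)$ is indeed the correct invariant forcing termination at the radial singularity $p=q$, where the discrepancy first becomes negative.
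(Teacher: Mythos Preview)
The paper does not give its own proof of this proposition; it is quoted verbatim from McQuillan \cite[Facts I.1.8 and I.1.9]{mcquillan08}. What the paper does record, in the two Facts immediately following the statement, is exactly the explicit blow-up bookkeeping you outline for the ``if'' direction: Fact \ref{fact:nilpotent} tracks the nilpotent case through at most three successive blow-ups until a discrepancy $\le -1$ appears, and Fact \ref{fact:diagonalizable} runs the Euclidean algorithm on the eigenvalue pair until the radial configuration is reached. Your proposal therefore matches both the cited source and the paper's recorded computations.

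One small point of comparison. In the nilpotent nonzero case you appeal to Seidenberg termination to conclude that the sequence of blow-ups with $m=1$ cannot continue indefinitely. This is logically fine, since you have checked that $m=1$ forces the new linear part to remain nilpotent or zero, and Seidenberg guarantees one eventually reaches a reduced singularity; but it is slightly indirect. The paper (following Brunella's proof of \cite[Theorem 1.1]{brunella}) instead makes the termination explicit: after the first blow-up the linear part is again nilpotent, after the second it becomes zero, and the third blow-up then has $a(E_3)\le -1$. This bounded three-step version is what is actually used later in the proof of Proposition \ref{prop:bertini}, where one needs to lift each blow-up of the surface to a blow-up of the ambient variety along a smooth centre, so the explicit count matters. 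Your Seidenberg argument is correct for the proposition as stated, but if you intend to feed into that later application you will want the sharper bound.

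For the converse, your case list is the right one; just be careful that the Jordan case with a single nonzero eigenvalue produces a saddle-node after one blow-up (as you note parenthetically), so the stable class under blow-up is really \{reduced semisimple, saddle-node\}, and the Jordan type feeds into it after one step.
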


\begin{fact}\label{fact:nilpotent} Notation as in Proposition \ref{prop:surface_singularity}.
If $D_xv$ is nilpotent, then the following holds (see proof of \cite[Theorem 1.1]{brunella}). 
The exceptional divisor $E_1$ of the blow-up $S_1$ of $S$ at $x$ has discrepancy $a(E_1) \le 0$.

Suppose that $a(E_1) = 0$. Then the induced foliation $\sL_1$ on $S_1$ has a unique singular point $x_1$ on $E_1$. Moreover, if $v_1$ is a local generator of $\sL_1$ in a neighbourhood of $x_1$, then $D_{x_1}v_1$ is nilpotent as well, and the exceptional divisor $E_2$ of the blow-up $S_2$ of $S_1$ at $x_1$ has discrepancy $a(E_2)\le 0$.

If $a(E_2)=0$, then the induced foliation $\sL_2$ on $S_2$ has a unique singular point $x_2$ on $E_2$, and $D_{x_2}v_2$ is zero, where $v_2$ denotes a local generator of $\sL_2$ in a neighbourhood of $x_2$. Moreover, the exceptional divisor $E_3$ of the blow-up $S_2$ of $S_1$ at $x_2$ has discrepancy $a(E_3) \le -1$.
\end{fact}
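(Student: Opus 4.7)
The argument is a direct local computation. The key tool is the standard formula for how the discrepancy transforms under blow-up for a rank one foliation on a smooth surface: if $\sL$ is generated near an isolated singularity $p$ by a vector field $w$ of algebraic multiplicity $m := \mult_p(w)$, and $\beta\colon S' \to S$ denotes the blow-up at $p$ with exceptional divisor $E$, then
$$
a(E) = \begin{cases} 1 - m & \text{if } w^{(m)} \text{ is not of the form } gR \text{ with } R = x\partial_x + y\partial_y, \\ -m & \text{otherwise}, \end{cases}
$$
where $w^{(m)}$ denotes the lowest order homogeneous part of $w$; moreover, $E$ is invariant under the saturated pull-back of $\sL$ in the first case, and transverse to it in the second. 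This formula is verified by a routine computation in either standard chart of the blow-up, tracking how many factors of the exceptional coordinate can be stripped from $\beta^*w$ in order to saturate.

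The first assertion follows immediately. If $D_xv = 0$ then $\mult_x(v) \ge 2$ and $a(E_1) \le 1 - 2 < 0$, while if $D_xv$ is nonzero nilpotent then $\mult_x(v) = 1$ and $v^{(1)} = D_xv$ cannot be of the form $gR$ (both its eigenvalues vanish, so $g$ would be zero), yielding $a(E_1) = 0$. In particular, $a(E_1) = 0$ forces $D_xv \ne 0$ nilpotent, and after a linear change of coordinates we may write $v = (y + h_a)\partial_x + h_b\,\partial_y$ with $h_a, h_b \in \frm_x^2$. A direct computation of $\beta^* v$ in the chart $(x_1, t)$ with $x = x_1$, $y = x_1 t$ shows that the saturated generator $v_1$ of $\sL_1$ reads
$$
v_1 = \bigl(x_1 t + O(x_1^2)\bigr)\partial_{x_1} + \bigl(-t^2 + b_{20}\,x_1 + O_{\ge 2}(x_1, t)\bigr)\partial_t,
$$
where $b_{20}$ is the coefficient of $x^2$ in $h_b$; a parallel computation in the other chart shows that $\sL_1$ is regular on $E_1$ there. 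Hence $(0,0)$ is the unique singular point $x_1$ of $\sL_1$ on $E_1$, and the Jacobian $D_{x_1}v_1$ is strictly lower triangular with $(2,1)$-entry $b_{20}$, in particular nilpotent. Applying the first assertion to $x_1 \in S_1$ yields $a(E_2) \le 0$.

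For the last part, $a(E_2) = 0$ forces $b_{20} \ne 0$. An entirely analogous computation in the chart $(u_1, y_2)$ with $x_1 = u_1 y_2$, $t = y_2$ of the blow-up of $S_1$ at $x_1$ shows that $\sL_2$ has a unique singular point $x_2 = (0, 0)$ on $E_2$, and a direct computation of the Jacobian at $x_2$ yields $D_{x_2}v_2 = 0$; the crucial ingredient is the cancellation of all degree-one terms, which follows from the explicit shape of $v_1$ obtained above. Consequently $\mult_{x_2}(v_2) \ge 2$, and the blow-up formula gives $a(E_3) \le 1 - 2 = -1$. The main obstacle is purely computational bookkeeping: one must retain sufficiently many Taylor coefficients of $v$ through three successive blow-ups in order to verify the successive cancellations that occur---in particular the one producing $D_{x_2}v_2 = 0$---but no conceptual difficulty is involved.
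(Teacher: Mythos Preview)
Your argument is correct. The paper itself does not supply a proof of this fact; it simply points to the proof of \cite[Theorem 1.1]{brunella}, where precisely this sequence of blow-ups in the nilpotent case is carried out. Your direct local computation is essentially a self-contained rendering of that argument: the blow-up discrepancy formula you state is the standard one, and the successive charts you choose are the natural ones in which the unique singular point appears. The key checks---that $a(E_1)=0$ forces $D_xv$ to be nonzero nilpotent, that after the first blow-up the linear part is $\begin{pmatrix}0&0\\ b_{20}&0\end{pmatrix}$, and that after the second blow-up both components of $v_2$ begin in degree two so that $D_{x_2}v_2=0$---are all verified by the expansions you indicate. There is nothing to add.
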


\begin{fact}\label{fact:diagonalizable} Notation as in Proposition \ref{prop:surface_singularity}.
Suppose that $D_xv$ is diagonalizable and that its eigenvalues $\lambda$ and $\mu$ are nonzero and satisfy 
$\frac{\lambda}{\mu}=:r\in \mathbb{Q}_{>0}$. Let $S_1$ be the blow-up of $S$ at $x$ with exceptional divisor $E_1$, and let $\sL_1$ be the foliation on $S_1$ induced by $\sL$. 

Suppose that $\lambda\neq \mu$.
A straightforward local computation then shows that $v$ extends to a regular vector field $v_1$ on some open neighbourhood of $E_1$ with isolated zeroes. Moreover, $\sL_1$ has two singularities with diagonalizable linear parts and the quotients of the eigenvalues of the linear parts are 
$r-1$ and $\frac{1}{r}-1$. The divisor $E_1$ has discrepancy $a(E_1)=0$.

If $\lambda= \mu$, then $E_1$ has discrepancy $a(E_1)\le -1$. 

In either case, the Euclidean algorithm implies that there exists a divisorial valuation with center $x$ and negative discrepancy.
\end{fact}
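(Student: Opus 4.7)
The plan is to perform the blow-up in explicit coordinates and track the linear part of $v$. Choose local coordinates $(x_1,x_2)$ centred at $x$ such that $v=\lambda x_1\partial_{x_1}+\mu x_2\partial_{x_2}+R$ with $R\in\frm_x^2\cdot T_S$. In the first affine chart of $\beta\colon S_1\to S$, with coordinates $(u,t)$ defined by $x_1=u$, $x_2=ut$, the chain-rule identities $\partial_{x_1}=\partial_u-(t/u)\partial_t$ and $\partial_{x_2}=(1/u)\partial_t$ yield, after cancellation of the apparent poles in $u$,
\[\beta^*v=\lambda u\,\partial_u+(\mu-\lambda)t\,\partial_t+u\cdot R_1(u,t)\]
for some regular vector field $R_1$. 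The symmetric computation in the second chart $(s,w)$ with $x_1=sw$, $x_2=w$ gives $\beta^*v=(\lambda-\mu)s\,\partial_s+\mu w\,\partial_w+w\cdot R_2(s,w)$.

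When $\lambda\neq\mu$, the displayed expressions show that $\beta^*v$ does not vanish along $E_1$ and has exactly two non-degenerate isolated zeroes, at the origins of the two charts, with diagonal linear parts having eigenvalues $(\lambda,\mu-\lambda)$ and $(\mu,\lambda-\mu)$ respectively; the corresponding quotients are $\tfrac{1}{r}-1$ and $r-1$. Thus $\beta^*v$ is a primitive generator of $\sL_1$ on a neighbourhood of $E_1$, so $\sL_1\cong \beta^*\sL$ as line bundles and $a(E_1)=0$. When $\lambda=\mu$, the term $(\mu-\lambda)t\partial_t$ vanishes and $\beta^*v=u\cdot v_1$ with $v_1=\lambda\partial_u+R_1$ regular and non-vanishing at a generic point of $E_1$. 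The primitive generator of $\sL_1$ is then $v_1$, yielding $\sL_1\cong\beta^*\sL\otimes\sO_{S_1}(E_1)$ and $K_{\sL_1}=\beta^*K_\sL-E_1$, so $a(E_1)=-1$.

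For the final claim, the case $\lambda=\mu$ is immediate. Assume $\lambda\neq\mu$ and write $r=p/q$ in lowest terms; after possibly swapping $x_1$ and $x_2$, assume $r>1$. Then the quotient $\tfrac{1}{r}-1<0$ makes the chart-one singularity canonical by Proposition \ref{prop:surface_singularity}, and any divisor lying above it has non-negative discrepancy; only the chart-two singularity, of positive rational quotient $r-1=(p-q)/q$, requires further blow-up. Iterating at this bad singularity, the pair $(p,q)$ evolves by the Euclidean step $(p,q)\mapsto(p-q,q)$ (or $(q-p,p)$ if the ratio drops below one), preserving $\gcd(p,q)=1$ and strictly decreasing $p+q$; after finitely many steps one reaches $(1,1)$, i.e.\ the equal-eigenvalue case, whose blow-up produces an exceptional divisor $E_n$ of discrepancy $\le -1$ over the preceding surface. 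Iterating the relation $K_{\sL_i}=\mu_i^*K_{\sL_{i-1}}+k_iE_i$ shows that the coefficient of $E_n$ in $K_{\sL_n}-\pi^*K_\sL$ equals $k_n+\sum_{i<n}k_i m_i$ for non-negative integers $m_i$; since $k_i=0$ for all $i<n$, this coefficient equals $k_n\le-1$, and $E_n$ provides a divisorial valuation with centre $x$ and negative discrepancy. The main technical point is to identify precisely which new singularity remains non-canonical at each step; once this is pinned down, termination of the Euclidean algorithm and the vanishing of intermediate discrepancies reduce the global calculation to the single-step formulas above.
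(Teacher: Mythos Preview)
Your proof is correct and carries out precisely the computation the paper only gestures at with the phrases ``a straightforward local computation'' and ``the Euclidean algorithm implies''; the paper gives no further details beyond the statement itself. Your explicit chart-by-chart analysis, the identification of the two new eigenvalue ratios, and the inductive tracking of discrepancies through successive blow-ups (using that all intermediate $k_i$ vanish) are exactly what is intended, and your sharper conclusion $a(E_1)=-1$ in the equal-eigenvalue case is consistent with the paper's stated inequality $a(E_1)\le -1$.
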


\begin{prop}\label{prop:bertini}
Let $X$ be a smooth quasi-projective variety and let $\sG$ be a codimension $1$ foliation on $X$ with canonical singularities. 
Let also $B$ be a smooth codimension two component of the singular set of $\sG$.
Let $S \subseteq X$ be a two dimensional complete intersection of general elements of a very ample linear system $|H|$ on $X$, and let $\sL$ be the foliation of rank $1$ on $S$ induced by $\sG$. Then $\sL$ has canonical singularities in a Zariski open neighbourhood of $B \cap S$.
\end{prop}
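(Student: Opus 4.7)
The plan is to argue by contradiction: assume that $\sL$ fails to be canonical at some point $b \in B \cap S$, and produce a divisorial valuation on $X$ with negative discrepancy for $\sG$, contradicting the canonicity hypothesis.

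First, by Proposition \ref{prop:surface_singularity} together with Facts \ref{fact:nilpotent} and \ref{fact:diagonalizable}, the non-canonicity of $\sL$ at $b$ produces a finite sequence of point blow-ups $\sigma_i \colon S^{(i)} \to S^{(i-1)}$ (with $S^{(0)} = S$, $i = 1, \ldots, r$), each centered at a point $b_{i-1}$ lying on the exceptional locus of the previous blow-up (with $b_0 = b$), whose final exceptional divisor $E'_r \subset S^{(r)}$ satisfies $a(E'_r, S, \sL) \le -1$.

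Next I would lift this sequence to blow-ups $\beta_i \colon X^{(i)} \to X^{(i-1)}$ of $X$ along smooth codimension-two centers $C_{i-1} \subset X^{(i-1)}$, with exceptional divisors $E_i \subset X^{(i)}$, so that each $\beta_i$ restricts to $\sigma_i$ on the proper transform $S^{(i)}$ of $S$. Set $C_0 = B$. For $i \ge 2$, choose $C_{i-1}$ to lie inside exactly those (proper transforms of) previous exceptional divisors $E_k$ for which $b_{i-1} \in E'_k$, and to meet $S^{(i-1)}$ transversely only at $b_{i-1}$. In the typical case where $b_{i-1}$ lies only on the latest exceptional $E_{i-1}$, the divisor $E_{i-1}$ is a smooth $\mathbb{P}^1$-bundle over $C_{i-2}$, and a general smooth codimension-one subvariety of $E_{i-1}$ through $b_{i-1}$ supplies $C_{i-1}$; when $b_{i-1}$ happens to lie on the intersection of two consecutive exceptional divisors, one takes $C_{i-1}$ to be that codimension-two intersection. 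Transversality of $C_{i-1}$ with $S^{(i-1)}$ is verified by a direct tangent-space computation.

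The central computation is the matching of discrepancies. The identity $\sN_\sL = \sN_\sG|_S$ (preserved through the blow-up tower by Lemma \ref{lemma:bertini_2}, which ensures that the induced twisted $1$-forms on the successive proper transforms of $S$ have zero locus of codimension at least two) yields, via adjunction, $K_\sL = (K_\sG + H_1 + \cdots + H_{n-2})|_S$ and analogous identities at every stage. A single-step calculation then gives $a(E_i, X^{(i-1)}, \sG^{(i-1)}) = a(E'_i, S^{(i-1)}, \sL^{(i-1)})$, where $\sG^{(i-1)}$ and $\sL^{(i-1)}$ denote the iterated pull-back foliations. Since the choice of $C_{i-1}$ guarantees $\mult_{C_{i-1}}(E_k) = \mult_{b_{i-1}}(E'_k)$ for every $k < i$, induction on $i$ upgrades this to $a(E_i, X, \sG) = a(E'_i, S, \sL)$ for all $i$. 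In particular, $a(E_r, X, \sG) \le -1$, contradicting the canonicity of $\sG$. Thus $\sL$ is canonical at every point of the finite set $B \cap S$, and therefore on a Zariski open neighbourhood of $B \cap S$.

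The main obstacle is the bookkeeping in the second and third steps: at each iteration one must verify that $C_{i-1}$ can be chosen smooth, codimension-two in $X^{(i-1)}$, meeting $S^{(i-1)}$ transversely only at $b_{i-1}$, and contained in exactly the correct collection of previous exceptional divisors, so that the multiplicities on $X$ and on $S$ track one another through the inductive discrepancy formula. The adjunction-based identification of single-step discrepancies, while morally straightforward, must also be rechecked at each step to ensure that the restriction map on normal sheaves remains an isomorphism along the whole tower.
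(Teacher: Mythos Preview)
Your approach has a genuine gap in the lifting step. The discrepancy $a(E_i,X^{(i-1)},\sG^{(i-1)})$ is governed by the behaviour of the defining twisted $1$-form of $\sG^{(i-1)}$ at the \emph{generic} point of the center $C_{i-1}$, not at the special point $b_{i-1}$. If $C_{i-1}$ is a general codimension-one subvariety of $E_{i-1}$ through $b_{i-1}$, it will not lie in the singular locus of $\sG^{(i-1)}$, so the $1$-form is nonvanishing along $C_{i-1}$ and the pull-back acquires no extra zeros along $E_i$; one then computes $a(E_i,X^{(i-1)},\sG^{(i-1)})\ge 0$, which does not match the surface discrepancy. Concretely, for $\sG$ on $\mathbb{A}^3$ given by $y\,dx-2x\,dy$ and $S=\{z=0\}$: after blowing up $B=\{x=y=0\}$ one has a unique singular section $B_1\subset E_1$; the point $b_1=B_1\cap S_1$ has eigenvalue ratio $1$, so $a(E'_2,S_1,\sL_1)\le -1$. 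But blowing up any curve $C_1\subset E_1$ through $b_1$ with $C_1\neq B_1$ gives $a(E_2,X_1,\sG_1)=+1$. The same computation shows that $\sN_{\sL^{(2)}}\neq\sN_{\sG^{(2)}}|_{S^{(2)}}$, so the adjunction identity you rely on fails from level $2$ onward. Your appeal to Lemma \ref{lemma:bertini_2} is also circular: its conclusion requires $S$ to be a general complete intersection \emph{after} the birational model is fixed, but your centers $C_{i-1}$ are chosen to pass through points $b_{i-1}$ that depend on $S$.

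The fix is precisely what the paper does: one must take $C_{i-1}$ to be the codimension-two component of the singular locus of $\sG^{(i-1)}$ itself. This center is determined by $\sG$ alone, independently of $S$, so Lemma \ref{lemma:bertini_2} applies legitimately at every step and the normal-sheaf identity persists. But then one owes an argument that such a component exists, is unique, and dominates $B$ (so that the hypothesis of Lemma \ref{lemma:bertini_2} continues to hold for the composite map). The paper supplies this via a family argument over all surfaces $S_u$: Kupka's theorem gives a local product structure at points where the linear part is nondegenerate, forcing the eigenvalue ratio to be constant along $B$; and a degree argument using Lemma \ref{lemma:residue_orbifold} excludes the possibility that $\sG_1$ is regular along a general fibre of $E_1\to B$, thereby producing the required singular section $B_1$. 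None of this structure is available if the centers are chosen generically.
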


\begin{proof} Set $n:=\dim X$. Suppose that $\dim X \ge 3$. Let $U \subseteq |H|^{n-2}$ be a dense open set such that $S_u:=X\cap H_1\cap \cdots \cap H_{n-2}$ is a smooth connected surface for any $u=(H_1,\ldots,H_{n-2}) \in U$, and let $\sL_u$ be the foliation of rank one on $S_u$ induced by $\sG$.
Let $T:=\{(x,u)\in B \times U\,|\, x \in B \cap S_u\}$, and denote by $p \colon T \to U$ and $q \colon T \to B$ the natural morphisms.
Shrinking $U$, if necessary, we may assume that $p$ is a finite \'etale cover. Let $\omega\in H^0(X,\Omega_X^1\otimes\sN)$ be a 
twisted $1$-form defining $\sG$. By Lemma \ref{lemma:bertini_2}, we may also assume that, for any $t=(x,u)\in T$,  
the induced twisted $1$-form $\omega_u\in H^0(S_u,\Omega_{S_u}^1\otimes\sN_{|S_u})$ 
on $S_u$ has isolated zeroes.
Given $t=(x,u)\in T$, let $v_t$ be a local generator of $\sL_u$ in a neighourhood of $x$. Note that $x$ is a singular point of $\sL_u$ since $\omega_u$ vanishes at $x$ and has isolated zeroes.

\medskip

In order to prove the proposition, we argue by contradiction and assume that the set of points $t=(x,u) \in T$ such that $\sL_u$ is not canonical at $x$ is dense in $T$. 

\medskip

Suppose that $D_xv_t$ is diagonalizable with nonzero eigenvalues $\lambda_t$ and $\mu_t$ satisfying $\frac{\lambda_t}{\mu_t}\in\mathbb{Q}_{>0}$ for some $t=(x,u) \in T$. Let $\Omega$ be a holomorphic $1$-form defining $\sG$ in some analytic open neighbourhood $W$ of $x$. Then we must have $d_x\Omega \neq 0$ since the restriction of $\Omega$ to $W \cap S_u$ defines ${\sL_u}_{|W}$ by choice of $U$.
A theorem of Kupka (see \cite{kupka}) then says that, shrinking $W$, if necessary, the $2$-form $d\Omega$ on $W$ defines a codimension two foliation tangent to $\sG_{|W}$. Therefore, shrinking $W$ further, there exists analytic coordinates $(x_1,\ldots,x_n)$ centered at $x$ on $W$ such that $\sG_{|W}$ is defined by the $1$-form $a(x_1,x_2)dx_1+b(x_1,x_2)dx_2$ for some holomorphic function $a$ and $b$ defined in a neighbourhood of $0$ in $\mathbb{C}^2$.

On the other hand, given $r \in \mathbb{Q}_{>0}$, the set of points $t=(x,u) \in T$ such that $D_xv_t$ is diagonalizable with eigenvalues $\lambda_t \neq 0$ and $r\lambda_t \neq 0$ is locally closed for the Zariski topology. Moreover,
the set of points $t=(x,u) \in T$ such that the linear part $D_xv_t$ of $v_t$ at $x$ is nilpotent is Zariski closed.  

Therefore, shrinking $U$ again, if necessary, we may assume that one of the following holds.
\begin{enumerate}
\item The linear part $D_xv_t$ is nilpotent for any $t=(x,u) \in T$.
\item There exists $r\in\mathbb{Q}_{>0}$ such that 
$D_xv_t$ is diagonalizable with nonzero eigenvalues $\lambda_t$ and $r\lambda_t$ for any $t=(x,u) \in T$.
\end{enumerate}

\medskip

\noindent\textit{Case 1.} Suppose first that $D_xv_t$ is nilpotent for any $t=(x,u) \in T$. Let $\beta_1\colon X_1 \to X$ be the blow-up of $X$ along $B$ with exceptional divisor $E_1$, and let $\sG_1$ be the foliation on $X_1$ induced by $\sG$. Notice that we have
\begin{equation}\label{eq:can_1}
K_{\sL_u} \sim_\mathbb{Z} \big(K_\sG+H_1+\cdots+H_{n-2}\big)_{|S_u}.
\end{equation}
Set $S_{1,u}:=\beta^{-1}(H_1)\cap\cdots\cap \beta^{-1}(H_{n-2})$ and denote by $\sL_{1,u}$ the foliation on $S_{1,u}$ induced by $\sL_u$.
Notice that $S_{1,u}$ is the blow-up of $S_u$ along $B\cap S_u$. 
By Lemma \ref{lemma:bertini_2} above, we also have
\begin{equation}\label{eq:can_2}
K_{\sL_{1,u}} \sim_\mathbb{Z} \big(K_{\sG_1}+\beta^*H_1+\cdots+\beta^*H_{n-2}\big)_{|S_{1,u}}.
\end{equation}
Let $t=(x,u) \in T$ and set $E_{1,t}:=\beta^{-1}(x)\subset S_{1,u}$. From equations \eqref{eq:can_1} and \eqref{eq:can_2}, we conclude that
\begin{equation*}
a(E_{1,t},S_{1,u},\sL_{1,u})=a(E,X,\sG).
\end{equation*} 
By Fact \ref{fact:nilpotent}, we have $a(E_{1,t},S_{1,u},\sL_{1,u}) \le 0$. On the other hand, by assumption, we must have
$a(E,X,\sG)\ge 0$. It follows that 
$$a(E_{1,t},S_{1,u},\sL_{1,u})=a(E,X,\sG)=0$$ 
for any $t=(x,u) \in T$. Moreover, $\sL_{1,u}$ has a unique singular point $x_{1,t}$ on $E_{1,t}$ by Fact \ref{fact:nilpotent} again. 

We claim that there exists a codimension two irreducible component $B_1 \subset E_1$ of the singular set of $\sG_1$ dominating $B$.
Suppose otherwise. Then $\sG_1$ is regular along a general fiber $\ell$ of the projection $E_1 \to B$. We have $K_{\sG_1}\cdot \ell =0$ since 
$K_{\sG_1}=\beta^*K_\sG$. On the other hand, we have $K_{X_1}\cdot \ell =-1$ by construction. It follows that
that $\sN_{\sG_1}\cdot \ell=1$. This immediately implies that $\ell$ is tangent to $\sG_1$ since 
$\deg \Omega_\ell^1\otimes {\sN_{\sG_1}}_{|\ell}=-1$. But this contradicts
Lemma \ref{lemma:residue_orbifold} below. Note that $B_1$ is unique since $\sL_{1,u}$ has a unique singular point $x_{1,t}$ on $E_{1,t}$ and $B_1 \cap E_{1,t}$ is contained in the singular set of $\sL_{1,u}$.

Let $\beta_2\colon X_2 \to X_1$ be the blow-up of $X_1$ along $B_1$ with exceptional divisor $E_2$, and let $\sG_2$ be the foliation on $X_2$ induced by $\sG$. Arguing as above, we see that we must have $a(E_2,X,\sG)=0$. Moreover, there exists a unique codimension two irreducible component $B_2 \subset E_2$ of the singular set of $\sG_2$ dominating $B$.

Let now $E_3$ be the exceptional divisor of the blow-up of $X_2$ along $B_2$. Arguing as above, we see that must have $a(E_2,X,\sG) \le -1$ by Step 1, yielding a contradiction.

\medskip

\noindent\textit{Case 2.} Suppose now that there exists $r\in\mathbb{Q}_{>0}$ such that 
$D_xv_t$ is diagonalizable with eigenvalues $\lambda_t$ and $r\lambda_t$ for any $t=(x,u) \in T$.
Arguing as in Case 1, one shows that there exists a divisorial valuation with center $B$ on $X$ and negative discrepancy. One only needs to replace the use of Fact \ref{fact:nilpotent} by Fact \ref{fact:diagonalizable}.
This yields again a contradiction, completing the proof of the proposition.
\end{proof}

\section{Basic results}

In this section, we extend a number of earlier results to the context of normal quasi-projective varieties with quotient singularities. See \cite{correa_bott_residue} for a somewhat related result.

\subsection{Algebraic and analytic $\mathbb{Q}$-structures} We will use the notions of $\mathbb{Q}$-varieties and $\mathbb{Q}$-sheaves without further explanation and simply refer to \cite[Section 2]{mumford_enumerative_geometry}.

\begin{notation}\label{Q-varieties} Let $X$ be a normal quasi-projective variety, and let
$X_\mathbb{Q}:= \big(X,\{p_{\alpha}:X_{\alpha}\to X\}_{\alpha \in A}\big)$ be a structure of $\mathbb{Q}$-variety on $X$. For each $\alpha\in A$, 
$X_\alpha$ is smooth and quasi-projective, and that there exists a normal variety $U_\alpha$ and a factorization of $p_\alpha$ 
\begin{center}
\begin{tikzcd}
X_\alpha \ar[rrrrrr, bend left=15, "p_\alpha"]\ar[rrrr, "{q'_\alpha,\textup{ Galois with group } G_\alpha}"'] &&&& U_{\alpha} \ar[rr, "{p'_\alpha,\textup{ \'etale}}"'] && X.
\end{tikzcd}
\end{center}
We will denote by $X_{\alpha\beta}$ the normalization of $X_\alpha\times_X X_\beta$, and by 
$p_{\alpha\beta,\alpha}\colon X_{\alpha\beta}\to X_\alpha$ and $p_{\alpha\beta,\beta}\colon X_{\alpha\beta} \to X_\beta$ the natural morphisms. 
Both $p_{\alpha\beta,\alpha}$ and $p_{\alpha\beta,\beta}$ are \'etale by the very definition of a $\mathbb{Q}$-structure.

In $\textit{loc. cit.}$, Mumford constructs a \textit{global cover} of $X_\mathbb{Q}$, that is, a quasi-projective normal variety $\wh{X}$, a finite Galois cover
$p:\widehat X\to X$ with group $G$, and for every $\alpha\in A$, a commutative diagram as follows:

\begin{center}
\begin{tikzcd}
\wh{X}_{\alpha} \ar[rrrrr, bend left=15, "p_{|\wh{X}_{\alpha}}"]\ar[rrr, "{q_{\alpha},\textup{ Galois cover}}"'] &&& X_{\alpha} \ar[rr, "p_{\alpha}"'] && X,
\end{tikzcd}
\end{center}
where $\wh{X}_\alpha\subseteq \wh{X}$ is an open. The finite map $(q_\alpha,q_\beta)\colon \wh X_{\alpha\beta}:=\wh X_\alpha \cap \wh X_\beta \to X_\alpha\times_X X_\beta$ factors through the normalization map $X_{\alpha\beta}\to X_\alpha\times_X X_\beta$ and induces a finite morphism $q_{\alpha\beta}\colon \wh X_{\alpha\beta} \to X_{\alpha\beta}$.
\end{notation}

\begin{fact}\label{fact:quotient_singularities_Q_strucutures}
Let $X$ be a normal quasi-projective variety with only quotient singularities. Then according to Mumford (\cite[Section 2]{mumford_enumerative_geometry}) there exists a strucutre of $\mathbb{Q}$-variety on $X$ given by a collection of charts $\{p_{\alpha}\colon X_{\alpha}\to X\}_{\alpha \in A}$ where $p_{\alpha}$ is \'etale in codimension one for every $\alpha\in A$. We will refer to it as a \textit{quasi-\'etale $\mathbb{Q}$-variety structure}.
\end{fact}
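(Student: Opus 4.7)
My plan is to construct the collection of charts $\{p_\alpha\}$ one point at a time using the local structure of quotient singularities, and then to verify that the resulting data satisfies the compatibility axioms of a $\mathbb{Q}$-variety.

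First, I would fix a closed point $x \in X$. By hypothesis, $X$ has a quotient singularity at $x$, so there is a finite subgroup $G_x \subset \mathrm{GL}(n,\mathbb{C})$ and an isomorphism between the henselization of $X$ at $x$ and the quotient $\mathbb{C}^n/G_x$. By the Chevalley-Shephard-Todd theorem, the normal subgroup $H_x \trianglelefteq G_x$ generated by the pseudo-reflections of $G_x$ has smooth quotient $\mathbb{C}^n/H_x \cong \mathbb{C}^n$, and the induced action of $G_x/H_x$ on this quotient contains no pseudo-reflection. Replacing $G_x$ by $G_x/H_x$ and $\mathbb{C}^n$ by $\mathbb{C}^n/H_x$, one may therefore assume that $G_x$ acts without pseudo-reflections, which is equivalent to the Galois cover $\mathbb{C}^n \to \mathbb{C}^n/G_x$ being étale in codimension one. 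An application of Artin approximation then yields an étale neighborhood $p'_\alpha \colon U_\alpha \to X$ of $x$ together with a Galois cover $q'_\alpha \colon X_\alpha \to U_\alpha$ with group $G_\alpha := G_x$, where $X_\alpha$ is smooth and $q'_\alpha$ is étale in codimension one; setting $p_\alpha := p'_\alpha \circ q'_\alpha$ produces the desired chart.

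Letting $x$ range over $X$, this produces a collection $\{p_\alpha \colon X_\alpha \to X\}_{\alpha \in A}$ of charts whose images cover $X$, each étale in codimension one by construction. It remains to check that the natural projections $p_{\alpha\beta,\alpha} \colon X_{\alpha\beta} \to X_\alpha$ and $p_{\alpha\beta,\beta} \colon X_{\alpha\beta} \to X_\beta$ from the normalized fibre product are étale. This is a local problem on $X$: pulling back to the common étale neighborhood $U_\alpha \times_X U_\beta$, both $X_\alpha$ and $X_\beta$ become Galois covers of the same smooth base, and are étale in codimension one; the maps $X_{\alpha\beta} \to X_\alpha$ and $X_{\alpha\beta} \to X_\beta$ are therefore finite morphisms between smooth varieties that are étale in codimension one, hence étale by the Zariski-Nagata purity theorem.

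The step I expect to require the most care is the algebraization passage from the analytic local model $\mathbb{C}^n/G_x$ to a genuine étale neighborhood of $x$ in $X$; once Artin approximation supplies this, the verification of the $\mathbb{Q}$-structure axioms and of the codimension-one-étaleness of each chart reduces to formal consequences of Chevalley-Shephard-Todd together with purity of the branch locus, as explained in Mumford's original construction.
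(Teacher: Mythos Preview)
Your proposal is correct and is precisely the argument the paper is pointing to: the paper does not give its own proof but simply invokes Mumford's construction in \cite[Section 2]{mumford_enumerative_geometry}, and what you have written is a faithful sketch of that construction (Chevalley--Shephard--Todd to kill pseudo-reflections, Artin approximation to algebraize the local quotient model, purity of the branch locus to check the $\mathbb{Q}$-variety compatibility).

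One small imprecision worth fixing: in your verification that $X_{\alpha\beta}\to X_\alpha$ is \'etale, you refer to $U_\alpha\times_X U_\beta$ as a ``smooth base'', but this fibre product is only \'etale over $X$ and hence inherits the quotient singularities of $X$. The purity argument does not need the base to be smooth; what matters is that the \emph{target} $X_\alpha$ (or its \'etale pullback $X_\alpha\times_X U_\beta$) is smooth, so that a finite morphism from the normal variety $X_{\alpha\beta}$ which is \'etale in codimension one is automatically \'etale. This is exactly how the paper phrases the analogous step in Fact~\ref{fact:purity}. You should also note that, since $X$ is quasi-projective, one may pass to a finite subcollection of charts before invoking Mumford's global-cover construction.
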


\begin{fact}\label{fact:purity}
Let $X$ be a normal quasi-projective variety, and let $\{p_\alpha \colon X_\alpha\to X\}_{\alpha \in A}$ be a finite set of morphisms
such that $X=\cup_{\alpha\in A}p_\alpha(X_\alpha)$. Suppose that we have a factorization of $p_\alpha = p'_\alpha\circ q'_\alpha$ as above. Then 
the collection of morphisms $\{p_\alpha \colon X_\alpha\to X\}_{\alpha \in A}$ automatically defines a quasi-\'etale $\mathbb{Q}$-variety structure on $X$ by purity of the branch locus.
\end{fact}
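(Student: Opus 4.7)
The plan is to verify Mumford's axioms for a $\mathbb{Q}$-variety structure, namely that for every $\alpha,\beta\in A$ the natural morphisms $p_{\alpha\beta,\alpha}\colon X_{\alpha\beta}\to X_\alpha$ and $p_{\alpha\beta,\beta}\colon X_{\alpha\beta}\to X_\beta$ are \'etale. By symmetry it suffices to treat $p_{\alpha\beta,\alpha}$. The quasi-\'etale feature of the resulting structure is then automatic from the standing assumption (implicit in the wording ``as above'') that each $p_\alpha$ is \'etale in codimension one.

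First I would check that $p_{\alpha\beta,\alpha}$ is \'etale in codimension one. Let $\xi\in X_\alpha$ be a codimension one point. Since $p_\alpha=p'_\alpha\circ q'_\alpha$ is the composition of a finite surjective Galois cover followed by an \'etale map, its image is open in $X$, so $\eta:=p_\alpha(\xi)$ is a codimension one point of $X$. The hypothesis that $p_\beta$ is quasi-\'etale means its non-\'etale locus has codimension at least two in $X$, so $p_\beta$ is \'etale at $\eta$. Base change along $p_\alpha$ shows that $X_\alpha\times_X X_\beta\to X_\alpha$ is \'etale, hence smooth, over a Zariski open neighborhood $V$ of $\xi$. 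In particular $X_\alpha\times_X X_\beta$ is already normal above $V$, so it coincides locally with its normalization $X_{\alpha\beta}$, and $p_{\alpha\beta,\alpha}$ is \'etale above $V$.

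Second I would apply Zariski--Nagata purity of the branch locus. Using the factorization $p_\beta=p'_\beta\circ q'_\beta$, one has the corresponding decomposition
\[
X_{\alpha\beta}\;\longrightarrow\;X_\alpha\times_X U_\beta\;\longrightarrow\;X_\alpha,
\]
in which the second arrow is \'etale (base change of $p'_\beta$) and, after working \'etale-locally on $X_\alpha$ so that the second arrow becomes a disjoint union of open immersions, the first arrow is a finite morphism from the normal variety $X_{\alpha\beta}$ to the smooth variety $X_\alpha$. By the previous paragraph this finite morphism is \'etale in codimension one; purity of the branch locus then forces its branch divisor to be empty, so it is \'etale everywhere. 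Gluing these local conclusions yields \'etaleness of $p_{\alpha\beta,\alpha}$, and a symmetric argument handles $p_{\alpha\beta,\beta}$. Together with the assumed quasi-\'etaleness of the $p_\alpha$, this provides the desired quasi-\'etale $\mathbb{Q}$-variety structure.

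The only substantive ingredient is Zariski--Nagata purity, which is exactly what is needed to upgrade the codimension one statement (supplied by base change from the quasi-\'etaleness of the $p_\gamma$) to genuine \'etaleness over the smooth chart $X_\alpha$; everything else is a routine check. Consequently the main ``obstacle'' is simply the correct formal bookkeeping that ensures purity may indeed be invoked, in particular the verification that the fiber product becomes smooth (and a fortiori normal) above the codimension one points of $X_\alpha$.
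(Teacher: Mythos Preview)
Your argument is correct and matches the paper's intended justification, which is nothing more than the phrase ``by purity of the branch locus'' inserted into the statement itself; no further proof is given there. You have simply spelled out what that phrase means: use the quasi-\'etaleness of the $p_\gamma$ (which is indeed the implicit hypothesis behind ``as above'', and is required since otherwise the conclusion fails already for $X=\mathbb{A}^1$ with charts $t\mapsto t^2$ and $s\mapsto s^3$) to see that the transition maps are \'etale in codimension one, then invoke Zariski--Nagata over the smooth target $X_\alpha\times_X U_\beta$ to conclude. One small wording point: when you say ``its non-\'etale locus has codimension at least two in $X$'', the non-\'etale locus lives in $X_\beta$, and what you use is that its image under the quasi-finite map $p_\beta$ has codimension at least two in $X$; this is true and is what makes the base-change step go through.
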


In the setting of Fact \ref{fact:quotient_singularities_Q_strucutures}, there is a finite covering $(V_i)_{i\in I}$ of $X$ by analytically open sets such that the following holds. For  
each $i\in I$, there exists $\alpha(i) \in A$ such that $V_i \subseteq p_{\alpha(i)}(X_{\alpha(i)})$ and a connected component 
$V'_i$ of $(p'_{\alpha(i)})^{-1}(V_i)$ such that the restriction of $p'_{\alpha(i)}$ to $V'_i$ induces an isomorphism onto $V_i$.
Set $X_i:=(q'_{\alpha(i)})^{-1}(V'_i)$ and $\wh X_i:=q_{\alpha(i)}^{-1}(X_i)$. We have a commutative diagram as follows:
\begin{center}
\begin{tikzcd}[column sep=large, row sep=large]
\wh X_i  \ar[r, hookrightarrow]\ar[d, "{q_i,\textup{ finite}}"'] & {\wh X}_{\alpha(i)} 
\ar[r, hookrightarrow]\ar[d, "{q_{\alpha(i)}}"] & \wh X\ar[dd, "{p}"]\\
X_i \ar[r, hookrightarrow]\ar[d, "{q'_i,\textup{ quasi-\'etale and Galois with group } G_i}"']\ar[dr, "{p_i}"] & X_{\alpha(i)} \ar[d, "{p_{\alpha(i)}}"] & \\
V_i \ar[r, hookrightarrow] & X \ar[r, equal] & X.
\end{tikzcd}
\end{center}
Note that we have $X=\cup_i p_i(X_i)$ by construction.
Let $X_{ij}$ denotes the normalization of $X_i\times_X X_j$. The natural morphisms
$p_{ij,i}\colon X_{ij}\to X_i$ and $p_{ij,j}\colon X_{ij} \to X_j$ are \'etale by purity of the branch locus and $X_{ij}$ is smooth.
The finite map $X_{ij} \to V_i \cap V_j$ is Galois with group $G_i \times G_j$. 
Moreover, the finite map $(q_i,q_j)\colon \wh X_{ij}:=\wh X_i \cap \wh X_j \to X_i\times_X X_j$ induces a finite morphism $q_{ij}\colon \wh X_{ij} \to X_{ij}$.
We view $p \colon \wh{X} \to X$ as a global cover for the analytic quasi-\'etale $\mathbb{Q}$-structure given by the collection of charts $\{p_{i}\colon X_{i}\to X\}_{i \in I}$. 

Notice that the collections of open sets $(g\cdot\wh{X}_i)_{g\in G,i\in I}$ and $(g\cdot \wh{X}_\alpha)_{g\in G,\alpha \in A}$ both form a covering of $\wh{X}$.

\subsection{A basic formula} In the present paragraph, we extend \cite[Proposition 2.2]{brunella} to surfaces with quotient singularities.

\begin{lemma}\label{lemma:log_canonical_invariant_curve}
Let $X$ be a normal quasi-projective surface with quotient singularities, and let $\sG \subset T_X$ be a foliation of rank one. Let $C$ be an irreducible complete curve on $X$ and suppose that $C$ is transverse to $\sG$ at a general point on $C$. Then 
$K_\sG\cdot C+C\cdot C \ge 0$.
\end{lemma}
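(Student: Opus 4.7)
The plan is to apply Brunella's classical tangency inequality (\cite[Proposition 2.2]{brunella}) on a log resolution of $(X,C)$ and to combine it with adjunction on the klt surface $X$. Recall that a normal surface with quotient singularities is $\bQ$-factorial and klt, so $K_\sG$ and $C$ are $\bQ$-Cartier and all intersection numbers make sense as rational numbers.

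First I would take a log resolution $\pi\colon \tilde X \to X$ of the pair $(X,C)$, let $\tilde C := \pi^{-1}_*(C)$ be the (smooth) strict transform, and let $\tilde \sG := \pi^{-1}\sG$ be the pull-back foliation. The key identity to establish is
\[
K_{\tilde \sG} - K_{\tilde X} \;=\; \pi^*(K_\sG - K_X) - E
\]
for some effective $\pi$-exceptional $\bQ$-divisor $E \ge 0$. This follows from the pull-back construction recalled in Section~\ref{pullback_foliations}: since the twisted $1$-form $\omega$ defining $\sG$ has zero locus of codimension at least two on $X$, the divisorial zeroes of $\pi^*\omega$ are all supported on the exceptional locus of $\pi$, and dividing by their local equations to saturate produces precisely such an $E$.

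Since $C$ is generically transverse to $\sG$, the smooth curve $\tilde C$ is generically transverse to $\tilde \sG$. Brunella's result on the smooth surface $\tilde X$ gives $K_{\tilde \sG} \cdot \tilde C + \tilde C^2 \ge 0$, while ordinary adjunction on $\tilde X$ gives $K_{\tilde X} \cdot \tilde C + \tilde C^2 = 2 g(\tilde C) - 2$. Subtracting, substituting the identity above, and invoking the projection formula together with $E \cdot \tilde C \ge 0$ (as $E$ is effective and $\tilde C \not\subseteq \textup{Supp}\, E$), I would get
\[
(K_\sG - K_X) \cdot C \;\ge\; 2 - 2 g(\tilde C).
\]
Independently, adjunction on the klt surface $X$ (with effective different $\textup{Diff}_C(0) \ge 0$) gives $(K_X + C) \cdot C \ge 2 g(\tilde C) - 2$. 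Summing the two inequalities collapses the $g(\tilde C)$-terms and yields exactly $K_\sG \cdot C + C \cdot C \ge 0$, as required.

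The main obstacle I expect is justifying the effectivity of $E$ in the identity for $K_{\tilde \sG} - K_{\tilde X}$ — this is the step that genuinely uses the codimension-two vanishing of the defining form of a foliation, and is also the place where one has to be careful comparing the reflexive pull-back of $N_\sG$ with its strict pull-back on $\tilde X$. The klt adjunction step, while standard, also needs to be invoked in the correct form (the different being effective for klt $X$ along any prime divisor $C$); both are available in the setting of quotient surface singularities.
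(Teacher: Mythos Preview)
Your reduction to Brunella's smooth result via a log resolution contains a genuine gap: the effectivity of the exceptional divisor $E$ in
\[
K_{\tilde\sG}-K_{\tilde X}=\pi^*(K_\sG-K_X)-E
\]
is \emph{false} in general for quotient surface singularities. Your justification implicitly identifies the Mumford pull-back $\pi^*c_1(\sN_\sG)$ with $c_1\big(\pi^{[*]}\sN_\sG\big)$; these agree over $X_{\textup{reg}}$ but differ by a rational exceptional combination whenever $\sN_\sG$ fails to be Cartier at a singular point, and that discrepancy need not have the sign you want. Concretely, take the $\frac{1}{n}(1,1)$ singularity $X=\mathbb{A}^2/\mu_n$ with $\mu_n$ acting diagonally, and let $\sG$ be induced by the $\mu_n$-invariant foliation $\langle\partial_x\rangle$ on $\mathbb{A}^2$. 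The minimal resolution has a single $(-n)$-curve $E$; a direct computation (the pulled-back foliation is given near $E$ by the vector field $u\partial_u-ns\partial_s$ in one chart and by $\partial_{u'}$ in the other) gives $K_{\tilde\sG}=\pi^*K_\sG+\tfrac{1}{n}E$, while $K_{\tilde X}=\pi^*K_X+\big(\tfrac{2}{n}-1\big)E$. Hence
\[
K_{\tilde\sG}-K_{\tilde X}=\pi^*(K_\sG-K_X)+\Big(1-\tfrac{1}{n}\Big)E,
\]
so your $E$ equals $-\big(1-\tfrac{1}{n}\big)E_{\textup{exc}}$, which is anti-effective for every $n\ge 2$. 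Once $E\cdot\tilde C$ can be negative, the chain of inequalities no longer collapses to $K_\sG\cdot C+C^2\ge 0$.

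The paper avoids resolutions altogether and instead runs Brunella's original construction directly through the quasi-\'etale $\mathbb{Q}$-structure: on each smooth chart $p_\alpha\colon X_\alpha\to X$ one has honest local generators $v_\alpha$ of $p_\alpha^{-1}\sG$ and local equations $f_\alpha$ of $p_\alpha^{-1}C$, and after arranging $v_\alpha^{\otimes m}$ and $f_\alpha^m$ to be $G_\alpha$-invariant, the tangency functions $(v_\alpha(f_\alpha))^m$ glue to a nonzero section of (the pull-back to $\widehat B$ of) $\big(\sG_\mathbb{Q}^*\otimes\sO_X(C)_\mathbb{Q}\big)^{\otimes m}$. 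Non-negativity of its degree, together with the projection formula for the global cover, gives $K_\sG\cdot C+C^2\ge 0$. The point is that quasi-\'etale covers preserve both $K_\sG$ and $K_X$ exactly, so no discrepancy bookkeeping is needed; your resolution introduces precisely the bookkeeping that fails.
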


\begin{rem}
Quotient singularities are $\mathbb{Q}$-factorial so that $K_\sG$ and $C$ are $\mathbb{Q}$-Cartier divisors.
\end{rem}

\begin{proof}[{Proof of Lemma \ref{lemma:log_canonical_invariant_curve}}]
The proof is very similar to that of \cite[Proposition 2.2]{brunella} and so we leave some easy details to the reader.

\medskip

Let $m$ be a positive integer such that $mK_\sG$ and $mC$ are Cartier divisors.
Let $X_\mathbb{Q}=\big(X,\{p_{\alpha}:X_{\alpha}\to X\}_{\alpha\in A}\big)$ be a quasi-\'etale $\mathbb{Q}$-structure on $X$ (see Fact \ref{fact:quotient_singularities_Q_strucutures}). We will use the notation of paragraph \ref{Q-varieties}.
Let $B \to X$ be the normalization of $C$ and let $\wh B$ be the normalization of some irreducible component of $p^{-1}(C)$. We have a commutative diagram as follows:
\begin{center}
\begin{tikzcd}
\wh{B} \ar[r]\ar[d] & \wh{X} \ar[d, "p"] \\
B \ar[r] & X.
\end{tikzcd}
\end{center}
Shrinking the $X_\alpha$, if necessary, we may assume that for each $\alpha\in A$ there exists a generator 
$v_\alpha$ of $p_\alpha^{-1}\sG$ on $X_\alpha$. We may also assume without loss of generality that there exists
a nowhere vanishing regular function $h_\alpha$ on $X_\alpha$ such that  
$h_\alpha v_\alpha^{\otimes m}$ is $G_\alpha$-invariant since $mK_\sG$ is a Cartier divisor by choice of $m$. Replacing $X_\alpha$ by an \'etale cover, if necessary, we can suppose that $h_\alpha^{\frac{1}{m}}$ is regular on $X_\alpha$, and replacing $v_\alpha$ by $h_\alpha^{\frac{1}{m}}v_\alpha$ we may finally assume that $v_\alpha^{\otimes m}$ is $G_\alpha$-invariant.
Similarly, we can suppose that the ideal sheaf $p_\alpha^{-1}\sI_C$ is generated by a regular function $f_\alpha$ such that 
$f_\alpha^m$ is $G_\alpha$-invariant. We will denote by $p_\alpha^{-1}C$ the scheme defined by equation $\{f_\alpha = 0\}$. Notice that the corresponding $\mathbb{Q}$-sheaves $\sG_\mathbb{Q}$ and $\sO_X(-C)_\mathbb{Q}$ are $\mathbb{Q}$-line bundles, and the pull-back $\sM$ of $\sG_\mathbb{Q}^*\otimes\sO_X(C)_\mathbb{Q}$ to $\wh B$ is a genuine line bundle.

One then readily checks that the functions $(v_\alpha(f_\alpha))^m$ restricted to $p_\alpha^{-1}C$ give a non-zero global section of $\sM^{\otimes m}$. In particular, $\sM$ has non negative degree. Observe that the pull-back of $\sG_\mathbb{Q}^{\otimes m}$ (resp. $\sO_X(C)_\mathbb{Q}^{\otimes m}\cong \sO_X(mC)_\mathbb{Q}$) to $\wh{X}$ is isomorphic to $p^*\sO_X(-mK_\sG)$
(resp. $p^*\sO_X(mC)$) so that $\sM^{\otimes m}$ is isomorphic to the pull-back of $\sO_X(mK_\sG+mC)$ to $\wh B$. The lemma then follows from the projection formula.
\end{proof}

\subsection{The Baum-Bott partial connection} The following result generalizes \cite[Corollary 3.4]{baum_bott70} to the setting of quasi-projective vareities with quotient singularities. 

\begin{lemma}\label{lemma:residue_orbifold}
Let $X$ be a normal quasi-projective variety with quotient singularities, and let $\sG \subset T_X$ be a codimension one foliation on $X$. 
Let also $\{p_i:X_i\to X\}_{i \in I}$ be an analytic quasi-\'etale $\mathbb{Q}$-structure on $X$ and suppose that 
$p_i^{-1}\sG$ is defined by a $1$-form $\omega_i$ with zero set of codimension at least two such that $d\omega_i=\alpha_i\wedge \omega_i$ for some holomorphic $1$-form $\alpha_i$ on $X_i$.
Then the following holds.
\begin{enumerate}
\item We have $c_1(\sN_\sG)^{2}\equiv 0$.
\item Let $Y \subseteq X$ be a projective subvariety. Suppose that $Y$ is not entirely contained in the union of the singular loci of $X$ and $\sG$.
Suppose moreover that $Y$ is tangent to $\sG$. Then ${c_1(\sN_\sG)}_{|Y} \equiv 0$.
\end{enumerate}
\end{lemma}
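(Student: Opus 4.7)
The plan is to transfer the question to the smooth charts of the $\mathbb{Q}$-structure, run the classical Baum--Bott curvature computation there, and descend. Fix an analytic quasi-\'etale $\mathbb{Q}$-structure $\{p_i\colon X_i \to X\}_{i\in I}$ as in the hypothesis. Since $p_i$ is quasi-\'etale, I freely identify $p_i^{[*]}\sN_\sG$ with $\sN_{p_i^{-1}\sG}$, so that information about $c_1(\sN_\sG)$ can be read off from the charts.

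\medskip

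The starting point is the algebraic identity obtained by differentiating $d\omega_i = \alpha_i\wedge\omega_i$: applying $d$ gives $0 = d\alpha_i\wedge\omega_i - \alpha_i\wedge\alpha_i\wedge\omega_i = d\alpha_i\wedge\omega_i$. Since the zero set of $\omega_i$ has codimension at least two, on the open subset $X_i^\circ \subset X_i$ where $\omega_i$ does not vanish we may locally write $d\alpha_i = \beta_i\wedge\omega_i$ for some holomorphic $1$-form $\beta_i$, and hence $(d\alpha_i)\wedge(d\alpha_i) = \beta_i\wedge\omega_i\wedge\beta_i\wedge\omega_i = -\beta_i\wedge\beta_i\wedge\omega_i\wedge\omega_i = 0$ identically on $X_i^\circ$. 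Just as in \cite[Corollary~3.4]{baum_bott70}, $\alpha_i$ defines the Bott partial connection on $\sN_{p_i^{-1}\sG}$, whose curvature $d\alpha_i$ represents $c_1(\sN_{p_i^{-1}\sG})$ (up to the usual factor $2\pi i$) on $X_i^\circ$. On an overlap $X_{ij}$ one has $\omega_j = g_{ij}\omega_i$ for a nowhere vanishing holomorphic function $g_{ij}$, and the integrability equations force $\alpha_j - \alpha_i \equiv d\log g_{ij}$ modulo multiples of $\omega_i$, so the curvatures $d\alpha_i$ assemble into a well-defined closed $2$-form $\eta$ on $X^\circ := \bigcup_i p_i(X_i^\circ)$, whose complement in $X$ has codimension at least two.

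\medskip

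For part (1), I would deduce $c_1(\sN_\sG)^2 \equiv 0$ from the pointwise vanishing of $(d\alpha_i)^2$ via descent: for any closed $2$-cycle $Z$ on $X$, pick a chart for which $p_i^{-1}(Z)$ meets $X_i^\circ$, and express the intersection $c_1(\sN_\sG)^2 \cdot Z$ as a multiple of the integral of $(d\alpha_i)^2$ over $p_i^{-1}(Z)$ using the projection formula for the quasi-\'etale $\mathbb{Q}$-cover; the form-level identity then forces the intersection to vanish. For part (2), let $Y_i\subset X_i$ be an irreducible component of $p_i^{-1}(Y)$ meeting $X_i^\circ$; such a component exists because, by hypothesis, $Y$ is not entirely contained in the singular loci of $X$ or of $\sG$. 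Tangency of $Y$ to $\sG$ then gives $\omega_i|_{Y_i\cap X_i^\circ} = 0$, and substituting into $d\alpha_i = \beta_i\wedge\omega_i$ yields $d\alpha_i|_{Y_i\cap X_i^\circ} = 0$. Hence the curvature representing $c_1(\sN_{p_i^{-1}\sG})|_{Y_i}$ vanishes on the regular part of $Y_i$, and descent through the $\mathbb{Q}$-structure gives $c_1(\sN_\sG)|_Y \equiv 0$.

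\medskip

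The main obstacle will be making the descent step completely rigorous in the orbifold setting. Specifically, I need to check that the local curvatures $d\alpha_i$ patch (possibly after pullback to the global cover $\wh{X} \to X$, via the cocycle $\{g_{ij}\}$ of transitions on the normalizations $\wh{X}_{ij}$) to a genuine closed $2$-form on $X^\circ$ whose cohomology class represents $c_1(\sN_\sG) \in H^2(X,\mathbb{Q})$, and then convert the form-level identities into numerical equivalences on $X$ and on $Y$ via the projection formula for the quasi-\'etale cover. Once this orbifold Baum--Bott packaging is in place, both (1) and (2) follow from the pointwise vanishings established above.
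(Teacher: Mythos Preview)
Your instinct to use Bott vanishing is right, but the implementation via the holomorphic forms $d\alpha_i$ has a real gap. You claim that ``the curvatures $d\alpha_i$ assemble into a well-defined closed $2$-form $\eta$ on $X^\circ$'', but this is false: from $\alpha_j-\alpha_i = d\log g_{ij} + f_{ij}\,\omega_i$ one computes $d\alpha_j-d\alpha_i = df_{ij}\wedge\omega_i + f_{ij}\,\alpha_i\wedge\omega_i$, which is generically a nonzero multiple of $\omega_i$. More fundamentally, each $d\alpha_i$ is \emph{exact} on $X_i$, so even if they did patch, the resulting de Rham class would be zero, not $c_1(\sN_\sG)$. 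A \emph{partial} connection has no curvature representing $c_1$; in the smooth Baum--Bott argument one completes it to a full $C^\infty$ connection before invoking Chern--Weil, and you have not done this. You correctly flag the descent as the obstacle in your last paragraph, but the proposed fix---patching the $d\alpha_i$ on $\wh X$ to a global $2$-form---cannot work for this reason.

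The paper avoids differential forms altogether and works in \v{C}ech cohomology with sheaf coefficients on the global cover $\wh X$. Set $\wh\sL:=p^{[*]}\sN_\sG^*$; the cocycle $\{q_{ij}^*\,d\log\phi_{ij}\}$ defines a class $c\in H^1\big(\wh X,\,p^{[*]}\Omega_X^{[1]}\big)$ mapping to $c_1(\wh\sL)\in H^1(\wh X,\Omega_{\wh X}^1)$. The crucial point is that $\frac{d\phi_{ij}}{\phi_{ij}}+\alpha_i-\alpha_j$, being a multiple of $\omega_i$, is a section of the \emph{saturated} line subbundle $\wh\sL\subset p^{[*]}\Omega_X^{[1]}$; hence $c$ lifts to some $b\in H^1(\wh X,\wh\sL)$. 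Since $\wh\sL$ has rank one, $b\cup b=0$ in $H^2(\wh X,\wedge^2\wh\sL)$, and pushing forward gives $c_1(\wh\sL)^2=0$. For (2), tangency of $Y$ forces the composite $f^*\wh\sL\to\Omega_{\wh Y}^1$ to vanish (where $\wh Y$ resolves a component of $p^{-1}(Y)$), so the image of $b$ in $H^1(\wh Y,\Omega_{\wh Y}^1)$---namely $c_1(f^*\wh\sL)$---is zero. Your pointwise identities $(d\alpha_i)^2=0$ and $d\alpha_i|_{Y_i}=0$ are local avatars of these sheaf-theoretic facts, but the argument does not pass through a global curvature form.
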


\begin{proof} We use the notation of paragraph \ref{Q-varieties}. Set $\sL:=\sN^*_\sG$ and $\wh{\sL}:=p^{[*]}\sL$. By \cite[Proposition 1.9]{hartshorne80}, the reflexive pull-back $p_\alpha^{[*]}\sL$ is a line bundle. It follows that ${\wh \sL}_{\,\,|\wh X_\alpha}\cong q_\alpha^*\big(p_\alpha^{[*]}\sL\big)$ since both sheaves are reflexive and agree over the big open set $X_{\textup{reg}}$. This shows that $\wh \sL$ is a line bundle. Moreover,
if $m$ is a positive integer such that $\sL^{[\otimes m]}$ is a line bundle, then we have
$p^*(\sL^{[\otimes m]})\cong {\wh\sL}^{\,\,\otimes m}$.

Let $\omega\in H^0\big(X,\Omega^{[1]}_X\boxtimes \sN_\sG\big)$ be a twisted $1$-form defining $\sG$. The reflexive pull-back of $\omega$ then gives an inclusion ${\wh{\sL}} \subset p^{[*]}\Omega_X^{[1]}$, which is saturated by \cite[Lemma 9.7]{fano_fols}. Note that we have $p^{[*]}\Omega_X^{[1]}\subseteq \Omega_{\wh X}^{1}\subseteq \Omega_{\wh X}^{[1]}$ since we have a factorization $p_{|\wh{X}_\alpha}=p_\alpha \circ q_\alpha$ and $X_\alpha$ is smooth.

Shrinking the $V_i$, is necessary, we may assume that there exist nowhere vanishing holomorphic functions $h_i$ on $X_i$ such that 
$h_i\omega_i^{\otimes m}$ is $G_i$-invariant. Observe that  
$d\big(h_i^{\frac{1}{m}}\omega_i\big)=\big(\frac{1}{m}\frac{dh_i}{h_i}+\alpha_i\big)\wedge \big(h_i^{\frac{1}{m}}\omega_i\big)$
so that, replacing $\omega_i$ by $h_i^{\frac{1}{m}}\omega_i$ and $\alpha_i$ by $\frac{1}{m}\frac{dh_i}{h_i}+\alpha_i$, we can suppose that $\omega_i^{\otimes m}$ is $G_i$-invariant.

Now, we can write $${\omega_j}_{|X_{ij}} = \phi_{ij}\, {\omega_i}_{|X_{ij}}$$ on 
$X_{ij}$ where $\phi_{ij}$ is a nowhere vanishing holomorphic function since the
$1$-forms ${\omega_i}_{|X_{ij}}$ and ${\omega_j}_{|X_{ij}}$ both define the foliation $p_{ij,i}^{-1}p_i^{-1}\sG=p_{ij,j}^{-1}p_j^{-1}\sG$ on $X_{ij}$ and their zero sets set have codimension at least $2$ in $X_{ij}$. 
The holomorphic functions $(\phi_{ij}\circ q_{ij})^m$ on $\wh{X}_{ij}$ then give a cocycle with respect to the open covering
$(g\cdot \wh{X}_i)_{g \in G, i \in I}$ that corresponds to the isomorphism class of $\wh \sL^{\,\,\otimes m}$ as a complex analytic line bundle since it does over the open set $X_\textup{reg}$.

Let $c\in H^1\big(\wh X,p^{[*]}\Omega_X^{[1]}\big)$ be
the cohomology class corresponding to the cocyle 
with respect to the open covering $(g\cdot \wh{X}_i)_{g \in G, i \in I}$
induced by the $q_{ij}^*\frac{d\phi_{ij}}{\phi_{ij}}$.
Since $d\omega_i=\alpha_i\wedge \omega_i$ for any $i\in I$, we must have 
$$\Big(\frac{d\phi_{ij}}{\phi_{ij}}+{\alpha_i}_{|X_{ij}}-{\alpha_j}_{|X_{ij}}\Big)\wedge {\omega_i}_{|X_{ij}}=0.$$ 
This immediately implies that $q_{ij}^*\big(\frac{d\phi_{ij}}{\phi_{ij}}+{\alpha_i}_{|X_{ij}}-{\alpha_j}_{|X_{ij}}\Big) \in H^0\big(\wh X_{ij},{\wh{\sL}}_{\,\,|\wh X_{ij}}\big) \subseteq  
H^0\big(\wh X_{ij}, {p^{[*]}\Omega_X^{[1]}}_{|\wh X_{ij}}\big)$
since ${\wh{\sL}}$ is saturated in $p^{[*]}\Omega_X^{[1]}$, and shows that
$c$ is the image of a cohomological class $b \in H^1\big(\wh X,{\wh\sL}\,\,\big)$ under
the natural map $H^1\big(\wh X,{\wh\sL}\,\,\big) \to H^1\big(\wh X,p^{[*]}\Omega_X^{[1]}\big).$
By construction, $c \in H^1\big(\wh X,p^{[*]}\Omega_X^{[1]}\big)$ maps to
$c_1\big(\wh{\sL}\,\,\big)\in H^1\big(\wh X,\Omega_{\wh X}^{1}\big)$
under the natural map
$H^1\big(\wh X,p^{[*]}\Omega_X^{[1]}\big) \to H^1\big(\wh X,\Omega_{\wh{X}}^{1}\big)$.
On the other hand, $b \cup b =0 \in H^1\big(\wh X,\wedge^2\wh{\sL}\,\,\big)$ since $\wh{\sL}$ is a line bundle.
This immediately implies that $c_1\big(\wh{\sL}\,\,\big)^2 = 0 \in H^2\big(\wh X,\Omega_{\wh X}^{2}\big)$, proving (1).

Let $Y \subseteq X$ be a projective subvariety, and let $\wh Y$ be a resolution of some irreducible component of $p^{-1}(Y)$. We have a commutative diagram as follows:
\begin{center}
\begin{tikzcd}
\wh Y \ar[r, "f"]\ar[d, "{\textup{ generically finite}}"'] & \wh X\ar[d, "{p, \textup{ finite}}"]\\
Y \ar[r, hookrightarrow] & X.
\end{tikzcd}
\end{center}
Note that $c \in H^1\big(\wh X,p^{[*]}\Omega_X^{[1]}\big)$ maps to $c_1\big(f^*\wh{\sL}\,\,\big)\in H^1\big(\wh Y,\Omega_{\wh Y}^{1}\big)$
under the composed map
\begin{center}
\begin{tikzcd}[column sep=huge]
H^1\big(\wh X,p^{[*]}\Omega_X^{[1]}\big) \ar[r] & H^1\big(\wh X,\Omega_{\wh X}^{1}\big) 
\ar[r] & H^1\big(\wh Y,\Omega_{\wh Y}^{1}\big).
\end{tikzcd}
\end{center}
Suppose moreover that $Y$ is not entirely contained in the union of the singular loci of $X$ or $\sG$, and that 
it is tangent to $\sG$. Then the composed map of sheaves 
\begin{center}
\begin{tikzcd}
f^*{\wh\sL} \ar[r] & f^*\big(p^{[*]}\Omega_X^{[1]}\big) \ar[r] &
f^*\Omega_{\wh X}^{1} \ar[r] & \Omega_{\wh Y}^{1}
\end{tikzcd}
\end{center}
vanishes. This immediately implies that $c_1\big(f^*\wh{\sL}\,\,\big) = 0 \in H^1\big(\wh Y,\Omega_{\wh Y}^{1}\big)$. Item (2) now follows from the projection formula.
\end{proof}

\subsection{Baum-Bott formula} The next result extends the Baum-Bott formula to surfaces with quotient singularities.

\begin{defn}Let $X$ be a normal quasi-projective algebraic surface with quotient singularities, and let $\sL \subset T_X$ be a foliation of rank one.
Given $x \in X$, there exist an open analytic neighbourhood $U$ of $x$, a (not necessarily connected) smooth analytic complex manifold $V$
and a finite Galois holomorphic map $p\colon  V\to U$ that is \'etale outside of the singular locus. Set
$$\textup{BB}^\mathbb{Q}(\sL,x):=\frac{1}{\deg p}\sum_{y \in p^{-1}(x)} \textup{BB}(p^{-1}\sL_{|U},y)$$ 
where $\textup{BB}(p^{-1}\sL_{|U},y)$ denotes the Baum-Bott index of 
$p^{-1}\sL_{|U}$ at $y$ (we refer to \cite[Section 3.1]{brunella} for this notion).
\end{defn}

\begin{rem}
One readily checks that $\textup{BB}^\mathbb{Q}(\sL,x)$ is independent of the local chart $p \colon V \to U$ at $x$.
\end{rem}

\begin{prop}\label{prop:BB_orbifold}
Let $X$ be a normal projective algebraic surface with quotient singularities, and let $\sL \subset T_X$ be a foliation of rank one.
Then $$c_1(\sN_\sL)^2 =\sum_{x\in X} \textup{BB}^\mathbb{Q}(\sL,x).$$
\end{prop}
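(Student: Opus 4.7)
The plan is to lift the statement to a smooth projective Galois cover of $X$ and to invoke the classical Baum--Bott formula there. Using Fact \ref{fact:quotient_singularities_Q_strucutures}, I would equip $X$ with a quasi-\'etale $\mathbb{Q}$-variety structure $\{p_\alpha\colon X_\alpha \to X\}_{\alpha\in A}$ and consider Mumford's global cover $p\colon \wh X \to X$, which is Galois of degree $N$. The first observation is that $\wh X$ is smooth: for each $\alpha$ the induced Galois cover $q_\alpha\colon \wh X_\alpha \to X_\alpha$ is \'etale in codimension one (since both $p_{|\wh X_\alpha} = p_\alpha \circ q_\alpha$ and $p_\alpha$ are quasi-\'etale over $X$, and $X_\alpha$ is smooth), so purity of the branch locus forces $q_\alpha$ to be \'etale, whence $\wh X_\alpha$ is smooth. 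Let $\wh\sL$ denote the rank one foliation on $\wh X$ obtained by pulling back $\sL$. Since $\wh X$ is a smooth projective surface, the classical Baum--Bott formula yields
\[
c_1(N_{\wh\sL})^2 = \sum_{y \in \wh X} \textup{BB}(\wh\sL, y).
\]

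Next I would relate each side with the corresponding quantity on $X$. For the left-hand side, the argument used in the proof of Lemma \ref{lemma:residue_orbifold} shows that $p^{[*]}\sN_\sL$ is a line bundle canonically isomorphic to $N_{\wh\sL}$; the projection formula then gives
\[
c_1(N_{\wh\sL})^2 = N \cdot c_1(\sN_\sL)^2,
\]
where the right-hand side is an intersection number of $\mathbb{Q}$-Cartier classes on $X$. For the right-hand side I would group the points of $\wh X$ according to their image in $X$. Fix $x \in X$ together with a local Galois chart $p_V\colon V\to U \ni x$ underlying the definition of $\textup{BB}^{\mathbb{Q}}(\sL,x)$, with Galois group of order $d_V$. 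Near a preimage $y \in p^{-1}(x)$, the quasi-\'etale $\mathbb{Q}$-structure produces a factorization $\wh X \supseteq \wh U \xrightarrow{r} V \xrightarrow{p_V} U$ in which $r$ is \'etale (again by purity). Because the classical Baum--Bott index is preserved by \'etale local analytic isomorphisms, one has $\textup{BB}(\wh\sL,y) = \textup{BB}(p_V^{-1}\sL, y_V)$, where $y_V$ is the unique preimage of $x$ in $V$. An orbit-counting argument with the $G$-action on $\wh X$ gives $|p^{-1}(x)| = N/d_V$, whence
\[
\sum_{y\in p^{-1}(x)} \textup{BB}(\wh\sL,y) = N \cdot \textup{BB}^{\mathbb{Q}}(\sL,x).
\]
Summing over $x \in X$, dividing by $N$, and combining with the formula for the left-hand side produces the claimed identity.

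The main obstacle is the compatibility between the global Galois structure on $\wh X$ and the local Galois structure used in the definition of $\textup{BB}^{\mathbb{Q}}(\sL,x)$: one must choose $p_V$ so that its Galois group matches the stabiliser of a preimage of $x$ in $G$, and keep careful track of how many preimages of $x$ actually sit in $\wh X$. Once this bookkeeping is done, the factor $1/d_V$ appearing in the definition of $\textup{BB}^{\mathbb{Q}}$ is exactly cancelled by the size of the $G$-orbit above $x$. The remaining ingredients, namely the smoothness of $\wh X$ and the identification $p^{[*]}\sN_\sL \cong N_{\wh\sL}$, are routine consequences of the material already developed for Lemma \ref{lemma:residue_orbifold}.
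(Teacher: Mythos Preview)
Your argument hinges on the assertion that Mumford's global cover $p\colon \wh X \to X$ is quasi-\'etale, from which you deduce that each $q_\alpha$ is \'etale and hence that $\wh X$ is smooth. This step is not justified and is in fact false in general. Take $X=\mathbb{P}(1,1,2)$, the cone over a conic. The smooth locus $X_{\textup{reg}}$ is an $\mathbb{A}^1$-bundle over $\mathbb{P}^1$, hence simply connected, so the only quasi-\'etale cover of $X$ is the identity. On the other hand, the quasi-\'etale $\mathbb{Q}$-structure must include a chart $p_0\colon X_0\to X$ of degree two at the $A_1$ point, and the global cover must admit a factorization $\wh X_0 \to X_0 \to X$ over that chart. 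These two facts are incompatible with $p$ being quasi-\'etale. Consequently $p$ is ramified along a divisor, and the paper's repeated reference to ``any resolution of $\wh X$'' reflects the fact that $\wh X$ need not be smooth.

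Once $p$ is ramified in codimension one, both halves of your comparison break down. The identification $p^{[*]}\sN_\sL \cong \sN_{\wh\sL}$ fails: the pullback of a defining $1$-form acquires zeroes along the ramification divisor, so $c_1(\sN_{\wh\sL})^2$ differs from $\deg(p)\cdot c_1(\sN_\sL)^2$ by terms involving the branch locus. Likewise, the pullback foliation $\wh\sL$ picks up new singular points along the ramification divisor (at points where the branch curve is tangent to $\sL$), and your orbit-counting no longer matches the local indices on $X$. The paper avoids all of this by \emph{not} reducing to the smooth case: it constructs an explicit Chern--Weil representative for $c_1(\sN_\sL)$ out of local defining $1$-forms on the charts $X_i$, pushes this $2$-form up to $\wh X$ (and to a resolution only to make the integration meaningful), and then applies Stokes' theorem to localise the integral to the singular points, exactly as in Brunella's smooth proof.
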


\begin{proof}
The proof is very similar to that of \cite[Theorem 3.1]{brunella} and so we leave some easy details to the reader.
We use the notation of paragraph \ref{Q-varieties}. Let $m$ be a positive integer such that $\sN_\sL^{[\otimes m]}$ is a line bundle and set 
$\wh{\sN}_\sL:=p^{[*]}\sN_\sL$.
Shrinking the $V_i$, is necessary, we may assume without loss of generality that
${p_i}^{-1}\sL$ is defined by a $1$-forms $\omega_i$ with isolated zeroes, and that
there exist a smooth $(1,0)$-form $\beta_i$ on $X_i$ and a small enough open set $W_i\Subset V_i$ such that $d\omega_i=\beta_i \wedge \omega_i$ on $X_i \setminus p_i^{-1}(\wb{W}_i)$. We may assume that $W_i$ is the image in $V_i$ of some small enough open ball in $X_i$.
We may also assume that $\omega_i$ is nowhere vanishing on $X_i$ or that it vanishes at a single point contained in $p_i^{-1}(W_i)$,
and that $W_i\cap V_j=\emptyset$ if $i \neq j$. 
One then observes that
$$\textup{BB}({p_i}^{-1}\sL,x) = \frac{1}{(2i\pi)^2}\int_\Sigma\beta\wedge d\beta$$
for any $x \in X_i$, where $\Sigma$ is a small enough suitably oriented $3$-sphere in $X_i$ centered at $x$ and $\beta$ is any smooth $(1,0)$-form on $X_i$ such that $d\omega_i=\beta\wedge\omega_i$ in a neighbourhood of $\Sigma$. 
Finally, we can suppose that $\omega_i^{\otimes m}$ is $G_i$-invariant (see proof of Lemma \ref{lemma:residue_orbifold}) and that $\beta_i$ vanishes identically on some open neighbourhood of the singular locus of $\omega_i$ and some open neighbourhood of the inverse image of the singular set of $X$ in $X_i$. Since $\omega_i$ is semi-invariant under $G_i$, replacing $\beta_i$ by 
$\frac{1}{\sharp\, G_i}\sum_{g\in G_i}g^*\beta_i$, if necessary, we may assume that $\beta_i$ is $G_i$-invariant. 

We can write $${\omega_i}_{|X_{ij}} = \phi_{ij}\, {\omega_j}_{|X_{ij}}$$ on 
$X_{ij}$ where $\phi_{ij}$ is a nowhere vanishing holomorphic function since the holomorphic 
$1$-forms ${\omega_i}_{|X_{ij}}$ and ${\omega_j}_{|X_{ij}}$ both define the foliation $p_{ij,i}^{-1}p_i^{-1}\sL=p_{ij,j}^{-1}p_j^{-1}\sL$ on $X_{ij}$ and have isolated zeros. 
The holomorphic functions $(\phi_{ij}\circ q_{ij})^m$ on $\wh{X}_{ij}$ then give a cocycle with respect to the open covering
$(g\cdot \wh{X}_i)_{g \in G, i \in I}$ that corresponds to the isomorphism class of $\wh{\sN}_\sL^{\,\,\otimes m}$ as a complex analytic line bundle since it does over the open set $X_\textup{reg}$. Notice that the smooth form $\frac{d\phi_{ij}}{\phi_{ij}}+{\beta_j}_{|X_{ij}}-{\beta_i}_{|X_{ij}}$ vanishes identically if $i=j$.
An easy computation now shows that 
$$\Big(\frac{d\phi_{ij}}{\phi_{ij}}+{\beta_j}_{|X_{ij}}-{\beta_i}_{|X_{ij}}\Big)\wedge {\omega_i}_{|X_{ij}}=0$$
if $i\neq j$ using the fact that $W_i \cap V_j =\emptyset$. 
Therefore, the cocycle of smooth $(1,0)$-forms $\Big(\frac{d\phi_{ij}}{\phi_{ij}}+{\beta_j}_{|X_{ij}}-{\beta_i}_{|X_{ij}}\Big)_{ij}$ can be viewed as a cocycle of smooth sections of the $\mathbb{Q}$-line bundle $\sM_\mathbb{Q}$ induced by $\sN_{\sL}^*$ on $X_\mathbb{Q}$. 
On the other hand, the smooth $(1,0)$-form $m\Big(\frac{d\phi_{ij}}{\phi_{ij}}+{\beta_j}_{|X_{ij}}-{\beta_i}_{|X_{ij}}\Big)$ is the pull-back of a smooth $(1,0)$-form on $V_i \cap V_j$ for $i\neq j$ and vanishes identically if $i=j$. Using a partition of unity subordinate to the open cover 
$(V_i)_{i \in I}$
given by \cite[Proposition 1.2]{chiang}, we see that we may assume that there exist smooth $(1,0)$-forms $\gamma_i$ on $X_i$ such that 
$$\gamma_i \wedge \omega_i=0\textup{ on  }X_i,$$
\noindent and 
$$\frac{d\phi_{ij}}{\phi_{ij}}=\beta_i-\beta_j + \gamma_i - \gamma_j\textup{ on  }X_{ij}.$$
Notice that we have $d\omega_i = (\beta_i+\gamma_i)\wedge \omega_i$ on $X_i \setminus p_i^{-1}(\wb{W}_i)$. The
$2$-form $\Omega$ defined on $X_i$ by $\Omega_{|X_i}:=\frac{1}{2i\pi}d(\beta_i+\gamma_i)$ is a well-defined closed $2$-form on $X_\mathbb{Q}$ whose pull-back to any resolution of $\wh X$ is smooth
and represents the first Chern class of the pull-back of $\wh{\sN}_\sL$ to this resolution. It follows from the projection formula that 
$$\deg p\cdot c_1(\sN_\sL)^2 = c_1(\wh{\sN}_\sL)^2 = \int_{\wh X} \wh{\Omega}\wedge \wh{\Omega},$$
where $\wh{\Omega}$ denotes the $2$-form induced by $\Omega$ on $\wh{X}$.
On the other hand, on $\wh{X}_i\setminus p^{-1}(\wb{W}_i)$, we have $\wh{\Omega}\wedge \wh{\Omega} = 0$ by construction. Stokes' Theorem then implies that  
$$c_1(\sN_\sL)^2 = \sum_i \frac{1}{(2i\pi)^2\deg q'_i}\int_{\partial p_i^{-1}(\wb{W}'_i)}(\beta_i+\gamma_i)\wedge d(\beta_i+\gamma_i),$$
where $W_i \Subset W'_i \Subset V_i$ is the image in $V_i$ of some small enough open ball in $X_i$. We finally obtain
$$c_1(\sN_\sL)^2 = \sum_x \textup{BB}^\mathbb{Q}(\sL,x),$$
completing the proof of the proposition.
\end{proof}

\subsection{Camacho-Sad formula} We finally observe that the Camacho-Sad formula also extends to surfaces with quotient singularities.

\begin{defn}Let $X$ be a normal quasi-projective algebraic surface with quotient singularities, and let $\sL \subset T_X$ be a foliation of rank one. Let 
$C \subset X$ be a complete curve. Suppose that $C$ is invariant under $\sL$. Given $x \in X$, there exist an open analytic neighbourhood $U$ of $x$, a (not necessarily connected) smooth analytic complex manifold $V$ and a finite Galois holomorphic map $p\colon  V\to U$ that is \'etale outside of the singular locus. Set
$$\textup{CS}^\mathbb{Q}(\sL,C,x):=\frac{1}{\deg p}\sum_{y \in p^{-1}(x)} \textup{CS}\big(p^{-1}\sL_{|U},p^{-1}(C\cap U),y\big)$$ 
where $\textup{CS}\big(p^{-1}\sL_{|U},p^{-1}(C\cap U),y\big)$ denotes the Camacho-Sad index (we refer to \cite[Section 3.2]{brunella} for this notion).
\end{defn}

\begin{rem}
One readily checks that $\textup{CS}^\mathbb{Q}(\sL,C,x)$ is independent of the local chart $p \colon V \to U$ at $x$.
\end{rem}

\begin{prop}\label{prop:CS_orbifold}
Let $X$ be a normal quasi-projective algebraic surface with quotient singularities, and let $\sL \subset T_X$ be a foliation of rank one. Let $C \subset X$ be a complete curve and suppose that $C$ is invariant under $\sL$.
Then $$C^2 =\sum_{x\in C} \textup{CS}^\mathbb{Q}(\sL,C,x).$$
\end{prop}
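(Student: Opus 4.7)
The plan is to adapt the classical proof of the Camacho-Sad formula (\cite[Section 3.2]{brunella}) to the quasi-\'etale $\mathbb{Q}$-structure setting, running the same partition-of-unity argument used in the proof of Proposition~\ref{prop:BB_orbifold}. The key observation is that $\textup{CS}^{\mathbb{Q}}$ is defined by averaging the classical index over a local smooth Galois cover, so any sufficiently natural local-to-global integration formula carried out on a global cover will automatically produce the correct $\mathbb{Q}$-weighted sum on $X$.

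Concretely, I would fix a positive integer $m$ such that $mC$ is Cartier, and a quasi-\'etale $\mathbb{Q}$-structure $\{p_i\colon X_i \to X\}_{i \in I}$ on a neighbourhood of $C$ with global cover $p\colon \wh X \to X$, using the notation of paragraph~\ref{Q-varieties}. Arguing as in the proof of Proposition~\ref{prop:BB_orbifold}, one may assume that $p_i^{-1}\sL$ is defined by a $1$-form $\omega_i$ with isolated zeroes and $\omega_i^{\otimes m}$ being $G_i$-invariant, that $p_i^{-1}(C)$ is cut out by a holomorphic function $f_i$ with $f_i^m$ being $G_i$-invariant, and that each $X_i$ contains at most one singular point of $p_i^{-1}\sL$ along $p_i^{-1}(C)$. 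Since $C$ is $\sL$-invariant, there is a relation $\omega_i \wedge df_i = f_i\,\theta_i$ with $\theta_i$ a holomorphic $2$-form on $X_i$, so that a meromorphic $1$-form $\alpha_i$ on $p_i^{-1}(C)$ is locally determined by $\theta_i = \omega_i \wedge \alpha_i$; its residues at the singular points of $p_i^{-1}\sL$ are, by definition, the classical Camacho-Sad indices $\textup{CS}(p_i^{-1}\sL, p_i^{-1}(C), y)$.

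Next, the transition functions $\psi_{ij}$ defined by $f_j = \psi_{ij} f_i$ on $X_{ij}$ satisfy $\alpha_i|_{p_i^{-1}(C)} - \alpha_j|_{p_j^{-1}(C)} \equiv d\log \psi_{ij}$ on overlaps, so after modification by smooth $1$-forms via a partition of unity subordinate to $(V_i)_{i \in I}$ as provided by \cite[Proposition 1.2]{chiang}, the collection $(\alpha_i)$ assembles into a global smooth connection on the $\mathbb{Q}$-line bundle $\sO_X(C)_{\mathbb{Q}}|_{C}$ with poles only at the singular locus of $\sL$ on $C$. Stokes' theorem applied on the normalization $\wh C$ of an irreducible component of $p^{-1}(C)$, together with the projection formula, then yields $C^2 = \sum_{x \in C} \textup{CS}^{\mathbb{Q}}(\sL, C, x)$. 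The main obstacle is purely combinatorial: one has to verify that the ramification degree of $\wh C \to C$, the orders of the local isotropy groups $G_i$, and the index $[G:G_i]$ combine so that the Stokes boundary integral at each point $x \in C$ equals exactly $\frac{1}{\deg p}\sum_{y \in p^{-1}(x)} \textup{CS}(p_i^{-1}\sL, p_i^{-1}(C), y)$, matching the definition of $\textup{CS}^{\mathbb{Q}}(\sL, C, x)$. This is done precisely as in the corresponding step of the proof of Proposition~\ref{prop:BB_orbifold}, where the $1/\deg p$ factor arises from the projection formula relating intersection numbers on $X$ and on $\wh X$.
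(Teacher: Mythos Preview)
Your proposal is correct and shares the paper's high-level philosophy—adapt the classical Camacho--Sad argument to the quasi-\'etale $\mathbb{Q}$-structure via the global cover and a partition of unity—but the implementation differs. You work directly on the curve: from $\omega_i \wedge df_i = f_i\theta_i$ you extract meromorphic $1$-forms $\alpha_i$ on $p_i^{-1}(C)$ that serve as local connection forms for the $\mathbb{Q}$-line bundle $\sO_X(C)_{\mathbb{Q}}|_C$, and Stokes on $\wh C$ then yields the residue sum. The paper instead stays on the surface and recycles the Baum--Bott machinery wholesale: it reuses the closed $2$-form $\Omega$ already built in the proof of Proposition~\ref{prop:BB_orbifold} (representing $c_1(\wh\sN_\sL)$), constructs by the identical recipe a second closed $2$-form $A$ from the meromorphic forms $\alpha_i := f_i^{-1}\omega_i$ (representing $c_1\big(\wh\sN_\sL \otimes \sO_{\wh X}(-\wh C)\big)$), and then computes $C^2 = \frac{1}{\deg p}\int_{\wh C}(\wh\Omega - \wh A)$ via Stokes. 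Your route is closer to the original connection-on-normal-bundle argument and is arguably more direct; the paper's route has the virtue of literally reusing the objects from the Baum--Bott proof, so almost nothing new needs to be set up. One small point: the paper takes $\wh C$ to be the full scheme-theoretic preimage defined by $\sO_{\wh X}(-\wh C):=p^{[*]}\sI_C$ rather than the normalization of a single component, which makes the projection-formula step $\wh C^2 = \deg p \cdot C^2$ immediate and sidesteps the ramification-degree bookkeeping you flag at the end.
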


\begin{proof}
The proof is very similar to that of \cite[Theorem 3.2]{brunella} and so we again leave some easy details to the reader.
We maintain notation as in the proof of Proposition \ref{prop:BB_orbifold}. In particular, there is a well-defined closed $2$-form $\Omega$
on $X_\mathbb{Q}$ whose pull-back to any resolution of $\wh X$ is smooth
and represents the first Chern class of the pull-back of $\wh{\sN}_\sL$ to this resolution.

Shrinking the $V_i$, is necessary, we may assume that the ideal sheaf $p_i^{-1}\sI_C$ is generated by a holomorphic function $f_i$.  
Notice that the corresponding analytic $\mathbb{Q}$-sheaf $\sO_X(-C)_\mathbb{Q}$ is a $\mathbb{Q}$-line bundle. Let
$\wh{C}$ be the complete curve on $\wh{X}$ whose ideal sheaf is 
$\sO_{\wh X}\big(-\wh{C}\,\big):=p^{[*]}\sI_C$. 
Set $\alpha_i:=f_i^{-1}\omega_i$. 
The method used to construct the $2$-form $\Omega$ from the $\omega_i$ in the proof of Proposition \ref{prop:BB_orbifold} 
gives a closed $2$-form $A$   
on $X_\mathbb{Q}$ whose pull-back to any resolution of $\wh X$ is smooth
and represents the first Chern class of the pull-back of the line bundle $\wh{\sN}_\sL\otimes \sO_{\wh X}\big(-\wh{C}\,\big)$ to this resolution and such that 
$A_{|X_i}=\frac{1}{2i\pi}d \mu_i$, where $\mu_i$ is a smooth $(1,0)$-form such that
$d\alpha_i=\mu_i \wedge \alpha_i$ on $X_i \setminus p_i^{-1}(\wb{W}_i)$. 

Suppose that $p_i^{-1}\sL$ is singular, and let $x \in W_i$ be its singular point. There exist holomorphic functions $g_i$ and $h_i$ 
and a holomorphic $1$-form $\eta_i$ such that $g_i\omega_i = h_idf_i + f_i \eta_i$ in some analytic open neighbourhood of $x$. Then
$$\textup{CS}\big(p_i^{-1}\sL,p_i^{-1}(C),x\big)= - \frac{1}{2i\pi}\int_{\sigma_i} h_i^{-1}\eta_i,$$
where $\sigma_i \subset C$ is a union of small suitably oriented circles centered at $x$, one for each local branch of $C$ at $x$.
In particular, $\int_{\sigma_i} h_i^{-1}\eta_i$ depends only on $p_i^{-1}\sL$, $p_i^{-1}(C)$ and $x$. A straightforward computation then shows that
$d\alpha_i=(h_i^{-1}\eta_i+\beta_i+\gamma_i)\wedge \alpha_i$. On the other hand, we may obviously assume that $f_i$ and $h_i$ are relatively prime. Then, we must have ${\mu_i}_{|\sigma_i}={h_i^{-1}\eta_i}_{|\sigma_i}+{\beta_i}_{|\sigma_i}+{\gamma_i}_{|\sigma_i}$.

Let $\wh{A}$ be the $2$-form induced by $A$ on $\wh{X}$. Then
$$C^2=\frac{1}{\deg p}\wh{C}^2=\frac{1}{\deg p}\int_{\wh{C}}\wh{\Omega}-\frac{1}{\deg p}\int_{\wh{C}}\wh{A}.$$
On the other hand, by Stokes' theorem, we have
$$\frac{1}{\deg p}\int_{\wh{C}}\wh{\Omega}-\frac{1}{\deg p}\int_{\wh{C}}\wh{A}
=\sum_i\frac{1}{2i\pi\deg q'_i}\int_{\sigma_i}(\beta_i+\gamma_i-\mu_i)
=\sum_{x\in C} \textup{CS}^\mathbb{Q}(\sL,C,x).$$
This finishes the proof of the proposition.
\end{proof}

\section{Foliations on Mori fiber spaces}

In this section we provide another technical tools for the proof of the main results.

\begin{prop}\label{prop:transverse}
Let $X$ be a projective variety with klt singularities and let $\psi\colon X \to Y$ be a Mori fiber space with $\dim Y = \dim X -1\ge 1$. Let $\sG$ be a codimension one foliation on $X$ with $K_\sG\equiv 0$.
Suppose that there exist a closed subset $Z \subset X$ of codimension at least $3$
and an analytic quasi-\'etale $\mathbb{Q}$-structure $\{p_i:X_i\to X\setminus Z\}_{i \in I}$ on $X\setminus Z$ such that 
$p_i^{-1}\sG_{|X\setminus Z}$ is defined by a closed $1$-form with zero set of codimension at least two or by the $1$-form $x_{2}dx_{1} + \lambda_i x_{1}dx_{2}$ where $(x_{1},\ldots,x_{n})$ are analytic coordinates on $X_i$ and $\lambda_i \in \mathbb{Q}_{>0}$. Then there exists an open subset $Y^\circ \subseteq Y_{\textup{reg}}$ with complement of codimension at least two and a finite Galois cover $g\colon T \to Y$ such that the following holds. Set $T^\circ:=g^{-1}(Y^\circ)$ and $X^\circ:=\psi^{-1}(Y^\circ)$. 
\begin{enumerate}
\item The variety $T$ has canonical singularities and $K_T \sim_\mathbb{Z}0$; $T^\circ$ is smooth.
\item The normalization $M^\circ$ of the fiber product $T^\circ\times_Y X$ is a $\mathbb{P}^1$-bundle over $T^\circ$ and the map $M^\circ \to X^\circ$ is a quasi-\'etale cover.
\item The pull-back of $\sG_{|X^\circ}$ on $M^\circ$ yields a flat Ehresmann connection on $M^\circ \to T^\circ$.
\end{enumerate}
\end{prop}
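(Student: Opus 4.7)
The plan is to exploit that $\sG$ is a Riccati-type foliation with respect to $\psi$: transverse to a general fiber, singular/tangent along a divisor $\Sigma \subset X$ which maps finitely onto a divisor $B \subset Y$, with transverse monodromy around each component of $B$ controlled by the prescribed local normal forms. A finite Galois cover of $Y$ branched along $B$ with the right orders should simultaneously uniformize the $\mathbb{P}^1$-fibration and kill the transverse monodromy.

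First I would establish transversality: a general fiber $F \cong \mathbb{P}^1$ of $\psi$ is not tangent to $\sG$. From the exact sequence $0 \to \sG \to T_X \to \sN_\sG \to 0$ on the smooth locus, one has $c_1(\sN_\sG) = K_\sG - K_X$, so $K_\sG \equiv 0$ yields $c_1(\sN_\sG)\cdot F = -K_X\cdot F > 0$ since $\psi$ is a Mori fiber space. But Lemma \ref{lemma:residue_orbifold}(2) would force $c_1(\sN_\sG)_{|F} \equiv 0$ if $F$ were tangent to $\sG$, a contradiction. I then define $Y^\circ \subseteq Y_{\textup{reg}}$ to be the largest open subset over which $\psi$ is smooth of relative dimension one, $Z$ misses $\psi^{-1}(Y^\circ)$, and the tangency locus $\Sigma \subset X^\circ := \psi^{-1}(Y^\circ)$ of $\sG$ with respect to $\psi$ is a divisor mapping finitely onto a divisor $B \subset Y^\circ$. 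Since $\codim_X Z \ge 3$ and $\psi$ has relative dimension one, $\codim_Y(Y\setminus Y^\circ) \ge 2$.

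Over $X^\circ \setminus \Sigma$ the foliation $\sG$ defines a flat Ehresmann connection on the $\mathbb{P}^1$-fibration $\psi$, giving a transverse monodromy representation $\rho\colon \pi_1(Y^\circ \setminus B, \ast) \to \textup{PGL}_2(\mathbb{C})$. I would then compute the local monodromy around each component of $B$ using the charts $p_i\colon X_i \to X$: coordinates may be chosen so that $\psi \circ p_i$ is the projection onto $(x_3,\ldots,x_n)$. A closed $1$-form $df$ then yields trivial local monodromy (the function $f$ descends to a local first integral on $Y$), while the form $x_2\, dx_1 + \lambda_i x_1\, dx_2$ has leaves $\{x_1 x_2^{\lambda_i} = c\}$, producing diagonal local monodromy with eigenvalue ratio $\lambda_i \in \mathbb{Q}_{>0}$, hence of finite order. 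Consequently $\rho$ factors through a finite quotient, and the corresponding \'etale cover of $Y^\circ \setminus B$, extended by normalization to a finite Galois cover $g\colon T \to Y$ (taking Galois closure if needed), has $T^\circ = g^{-1}(Y^\circ)$ smooth.

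Conclusions (2) and (3) then follow formally: the monodromy of $\sG$ is killed on $T^\circ$, so the normalization $M^\circ$ of $T^\circ \times_Y X^\circ$ is a genuine $\mathbb{P}^1$-bundle over $T^\circ$, the map $M^\circ \to X^\circ$ is quasi-\'etale (branched only over $\Sigma$, matching the ramification of $g$ along $B$), and the pull-back of $\sG$ is a global flat Ehresmann connection. For (1), transversality gives $K_{\sG}|_{X^\circ \setminus \Sigma} \cong \psi^\ast K_{Y^\circ \setminus B}$, so $K_\sG \equiv 0$ translates (via a canonical bundle formula for $\psi$) into $K_Y + \Delta \equiv 0$ where $\Delta$ is the weighted boundary on $B$ with coefficients prescribed by the $\lambda_i$'s; the cover $g$ is engineered so that $g^\ast(K_Y + \Delta) \sim_{\mathbb{Z}} K_T$, yielding $K_T \sim_{\mathbb{Z}} 0$ and, combined with the klt hypothesis on $X$ propagated to $Y$, canonical singularities for $T$. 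The main obstacle, requiring the most care, is the local monodromy computation and its global assembly: verifying that both prescribed analytic normal forms sit inside a Riccati picture compatible with $\psi$, that the ramification of the quasi-\'etale $\mathbb{Q}$-chart interacts correctly with the transverse structure, and that the ramification orders of $g$ can be chosen simultaneously to kill $\rho$, to make $M^\circ \to T^\circ$ a $\mathbb{P}^1$-bundle, and to satisfy the canonical-class identity.
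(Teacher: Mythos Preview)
Your argument has a genuine gap at the very first step: the assertion that $\psi$ is smooth (hence a $\mathbb{P}^1$-bundle) over an open $Y^\circ$ with complement of codimension at least two is not justified, and in fact is the heart of the matter. Your sentence ``Since $\codim_X Z \ge 3$ and $\psi$ has relative dimension one, $\codim_Y(Y\setminus Y^\circ) \ge 2$'' only controls the image of $Z$; it says nothing about the locus where $\psi$ has non-reduced or reducible fibers, which for a Mori fiber space can certainly occur over a divisor in $Y$. Ruling this out is exactly what the paper's Lemma \ref{lemma:P_bundle} does, and its proof genuinely uses the Camacho--Sad formula on surfaces with quotient singularities (Proposition \ref{prop:CS_orbifold}) to show that a fiber over a codimension-one point cannot be entirely $\sG$-invariant, together with Lemma \ref{lemma:log_canonical_invariant_curve} to force irreducibility. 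None of this is visible in your outline.

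Second, the monodromy framework is misdirected. You conflate two different structures: the transverse $\textup{PGL}_2$-monodromy of the foliation and the multiple-fiber data of $\psi$. The cover $g\colon T\to Y$ in the statement is \emph{not} built to kill the monodromy of $\sG$; item (3) only asks that the pull-back foliation be a flat Ehresmann connection (everywhere transverse to fibers), which can have arbitrary global monodromy. In the paper, $g$ is the index-one cover of the klt pair $\big(Y,\sum_j\frac{m_j-1}{m_j}D_j\big)$, where the $D_j$ are the divisors over which $\psi^*D_j=m_jG_j$ is non-reduced; one first takes local cyclic covers of order $m_j$ to make fibers reduced, applies Lemma \ref{lemma:P_bundle}, reads off $K_Y+\sum_j\frac{m_j-1}{m_j}D_j\equiv 0$, proves this pair is klt via \cite{fujino99}, and invokes abundance (\cite{nakayama04}) to get torsion before passing to the index-one cover. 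Your construction of $g$ via $\rho$ gives no reason for its ramification to match the $m_j$, so the identity $g^*(K_Y+\Delta)\sim_\mathbb{Z}K_T$ is not available. Finally, the claim that the normal-form coordinates $(x_1,\dots,x_n)$ can be chosen with $\psi\circ p_i$ the projection onto $(x_3,\dots,x_n)$ is unjustified: those coordinates are dictated by $\sG$, not by $\psi$, so your local monodromy computation does not get off the ground.
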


Before we give the proof of Proposition \ref{prop:transverse}, we need the following auxiliary result, which might be of independent interest.

\begin{lemma}\label{lemma:P_bundle}
Let $X$ be quasi-projective variety with quotient singularities and let $\psi\colon X \to Y$ be a dominant projective morphism with connected and reduced fibers onto a smooth quasi-projective variety $Y$. Suppose that $\psi$ is equidimensional with $\dim Y = \dim X -1\ge 1$. Suppose in addition that $-K_X$ is $\psi$-ample. Let $\sG$ be a codimension one foliation on $X$ with $K_\sG\equiv_\psi 0$.
Suppose furthermore that there exists an analytic quasi-\'etale $\mathbb{Q}$-structure $\{p_i:X_i\to X\}_{i \in I}$ on $X$ such that $p_i^{-1}\sG$ is defined by a closed $1$-form with zero set of codimension at least two or by the $1$-form $x_{2}dx_{1} + \lambda_i x_{1}dx_{2}$ where $(x_{1},\ldots,x_{n})$ are analytic coordinates on $X_i$ and $\lambda_i \in \mathbb{Q}_{>0}$. Then there exists an open subset $Y^\circ \subseteq Y$ 
with complement of codimension at least two such that $\psi^\circ:=\psi_{|X^\circ}$ is a $\mathbb{P}^1$-bundle where $X^\circ:=\psi^{-1}(Y^\circ)$.
Moreover, $\sG_{|X^\circ}$ yields a flat Ehresmann connection on $\psi^\circ$.
\end{lemma}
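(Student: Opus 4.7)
My plan is to isolate a big open $Y^\circ \subseteq Y$ whose complement has codimension at least two, over which $\psi$ becomes a smooth $\mathbb{P}^1$-fibration with smooth total space and $\sG$ transverse to every fibre, and then to invoke Ehresmann's theorem to promote this data to a $\mathbb{P}^1$-bundle with flat connection.

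I would define $Y^\circ$ by removing from $Y$ the images under $\psi$ of $X_{\textup{sing}}$, of the singular locus of $\sG$, and of the locus of non-smooth or reducible fibres of $\psi$. Equidimensionality of $\psi$, reducedness of its fibres, and the $\psi$-ampleness of $-K_X$ imply that the general fibre is a smooth reduced rational curve of $-K$-degree $2$, hence isomorphic to $\mathbb{P}^1$. To check that each of the three discarded loci has codimension at least two in $Y$, I combine direct dimension counts (each source has codimension $\ge 2$ in $X$, and $X$ has quotient singularities) with the transversality of $\sG$ proved in the next paragraph, which rules out the pathological possibility that any of these loci contains a whole $\psi$-fibre over a codimension one family of base points.

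For transversality, I examine the sheaf map $T_{X/Y} \to \sN_\sG$ on the smooth locus of $\sG$. If on any open neighbourhood of a fibre $F \cong \mathbb{P}^1$ one had $T_{X/Y} \subseteq \sG$, then locally $\sG$ would be the pullback of a codimension one foliation $\sG_Y$ on $Y$, so that $K_\sG = K_{X/Y} + \psi^{*}K_{\sG_Y}$ and hence $K_\sG \cdot F = K_{X/Y}\cdot F = -2$, contradicting $K_\sG \equiv_\psi 0$. Hence the map $T_{X/Y}\to \sN_\sG$ is nonzero, and since both sides have degree $-K_X\cdot F = 2$ on a general fibre, it is actually an isomorphism there. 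Its zero divisor $D$ satisfies $D \cdot F = 0$ and is therefore $\psi$-vertical, $D = \psi^{*}E$ for an effective divisor $E \subseteq Y$. If $E$ were nonempty, the local models provided by the hypothesis would allow me to spread pointwise tangency along a fibre $F_y$, $y \in E$, to tangency on an open neighbourhood: at smooth points of $\sG$ via the holomorphic Poincar\'e lemma applied to the closed local first integral, and at Euler singularities $x_2 dx_1 + \lambda x_1 dx_2$ with $\lambda = p/q \in \mathbb{Q}_{>0}$ by extracting $q$-th roots of $x_1, x_2$ on a further finite cover, where the foliation is defined (up to a nowhere vanishing factor off the invariant divisors) by the closed form $d(y_1^q y_2^p)$. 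This would return us to the forbidden situation, so $E = \emptyset$ and $\sG$ is transverse to every fibre over $Y^\circ$.

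Finally, the involutive subbundle $\sG|_{X^\circ} \subset T_{X^\circ}$ is complementary to $T_{X^\circ/Y^\circ}$; integrability makes it a flat distribution, and completeness of the $\mathbb{P}^1$-fibres together with Ehresmann's theorem makes $(\psi^\circ, \sG|_{X^\circ})$ into an analytic, hence (by properness of $\psi^\circ$) algebraic, $\mathbb{P}^1$-bundle carrying the flat Ehresmann connection $\sG|_{X^\circ}$. The main obstacle is the spreading step at singular points of $\sG$: it requires the rational-slope hypothesis $\lambda_i \in \mathbb{Q}_{>0}$ together with the quasi-\'etale $\mathbb{Q}$-structure simultaneously, in order to produce a local first integral on a finite cover without introducing branch divisors incompatible with $\psi$, and thereby reduce the Euler-singularity case to the closed-form case already handled.
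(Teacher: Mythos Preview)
Your argument has a genuine gap in the ``spreading'' step. The existence of local holomorphic first integrals (even after passing to finite covers at the Euler singularities) does \emph{not} allow you to promote tangency of a single fibre $F_y$ to the condition $T_{X/Y}\subset\sG$ on an open neighbourhood of $F_y$. Consider, on an affine chart of $\mathbb{P}^1_z\times\mathbb{A}^1_t$, the foliation given by the closed form $\omega=d(tz)=t\,dz+z\,dt$: the fibre $\{t=0\}$ is tangent to $\sG$, yet nearby fibres are transverse, and the leaf through a general point is the hyperbola $z=c/t$, not a union of fibres. In this toy model the singularity at $z=\infty$ on the special fibre happens to have $\lambda=-1$, violating the hypothesis---but that is precisely the point: the constraint $\lambda_i\in\mathbb{Q}_{>0}$ is a \emph{global} condition on the configuration of singularities along an invariant compact fibre, and it can only be exploited through a global index computation, not through local first integrals alone.

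The paper's proof does exactly this. Assuming a fibre $C$ is $\sG$-invariant, one restricts to a general surface $S$ and applies the orbifold Camacho--Sad formula (Proposition~\ref{prop:CS_orbifold}) to $C\subset S$: since $C^2=0$ and each local index is $\le 0$ under the hypotheses, every index must vanish, forcing $\lambda_i=1$ at all singularities on $C$. Then, near $C$, the foliation is everywhere defined by closed $1$-forms, and the Baum--Bott partial connection (Lemma~\ref{lemma:residue_orbifold}) forces $c_1(\sN_\sG)\cdot C=0$, contradicting $c_1(\sN_\sG)\cdot C=-K_X\cdot C=2$. A second, separate argument using Lemma~\ref{lemma:log_canonical_invariant_curve} then shows that fibres are irreducible in codimension one; your dimension counts cannot establish this either, since the locus of reducible fibres, and likewise $\psi(X_{\textup{sing}})$ or $\psi(\textup{Sing}\,\sG)$, could a priori be a divisor in $Y$.
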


\begin{proof} Notice that $\sG$ is regular along the generic fiber of $\psi$. It follows that
general fibers of $\psi$ are not tangent to $\sG$ since $-K_X$ is $\psi$-ample and $K_\sG\equiv_\psi 0$ by assumption (see Lemma \ref{lemma:residue_orbifold}). We first show the following. 

\begin{claim}\label{claim:irreducibility}
There exists an open set $Y^\circ$ with complement of codimension at least two such that, for any $y \in Y^\circ$, $\psi^{-1}(y)$ is transverse to $\sG$ at some point on $\psi^{-1}(y)$.
\end{claim}

\begin{proof} We argue by contradiction and assume that there is an irreducible hypersurface $D$ in $Y$ such that any irreducible component of $\psi^{-1}(y)$ is 
tangent to $\sG$ for all $y \in D$.

\medskip

\noindent\textit{Step 1. Setup.} Let $B \subset Y$ be a one dimensional complete intersection of general members of a very ample linear system on $Y$ passing through a general point $y$ of $D$ and set $S =:\psi^{-1}(B) \subset X$. Notice that $S$ is klt. Let $\sL$ be the foliation 
of rank one on $S$ induced by $\sG$, and denote by $\pi \colon S \to B$ the restriction of $\psi$ to $S$.
By the adjunction formula, $-K_S={-K_X}_{|S}$ is $\pi$-ample. 
By general choice of $B$, we have $\det\sN_\sL \cong (\det\sN_\sG)_{|S}$ (see Lemma \ref{lemma:bertini}), and hence $K_\sL\equiv_\pi 0$. Set $C:=\psi^{-1}(y) \subset S$ and observe that $C^2=0$ by construction.

By general choice of $B$, we may also assume that the collection of charts $\{p_i\colon X_i\to X\}_{i \in I}$ induces an analytic quasi-\'etale $\mathbb{Q}$-structure  
$\{q_i:S_i\to S\}_{i \in I}$ on $S$ where $S_i:=p_i^{-1}(S)$ and $q_i:={p_i}_{|S_i}$, and that either $q_i^{-1}\sL$ is defined by the $1$-form
$d f_i$ where $f_i$ is a holomorphic function on $S_i$ such that $d f_i$ has isolated zeroes (see Lemma \ref{lemma:bertini}) or it is given by the local $1$-form $z_{2}dz_{1} + \lambda_i z_{1}dz_{2}$ where $(z_{1},z_{2})$ are analytic coordinates on $S_i$ and $\lambda_i \in \mathbb{Q}_{>0}$.

\medskip

\noindent\textit{Step 2.} By assumption, any irreducible component of $C$ is invariant under $\sL$. 
Let $x_i$ be a point on $S_i$ with $p_i(x_i)\in C$. 

Suppose that $q_i^{-1}\sL$ is defined by the $1$-form $d f_i$ where $f_i$ is a holomorphic function on $S_i$ such that $d f_i$ has isolated zeroes.
Suppose furthermore that $q_i^{-1}(C)$ is given at $x_i$ by equation $t_i=0$ and that $f_i(x_i)=0$. Then $f_i=t_i g_i$ for some local holomorphic function $g_i$ on $S_i$ at $x_i$ and $-\textup{CS}(q_i^{-1}\sL,q_i^{-1}(C),x_i)$ is equal to the vanishing order of ${g_i}_{|q_i^{-1}(C)}$ at $x_i$. 
In particular, we have $\textup{CS}(q_i^{-1}\sL,q_i^{-1}(C),x_i) \le 0$.

Suppose now that $q_i^{-1}\sL$ is given by the local $1$-form $z_{2}dz_{1} + \lambda_i z_{1}dz_{2}$ where $(z_{1},z_{2})$ are analytic coordinates on $S_i$ and $\lambda_i \in \mathbb{Q}_{>0}$. If $x_i$ is not a singular point of $q_i^{-1}\sL$ then $\textup{CS}(q_i^{-1}\sL,q_i^{-1}(C),x_i)=0$. Suppose otherwise.
Then 
$$ \textup{CS}(q_i^{-1}\sL,q_i^{-1}(C),x_i) = \left\{
\begin{array}{ll}
-\lambda_i^{-1}<0 & \text{if } q_i^{-1}(C)=\{z_1=0\},\\
-\lambda_i<0 & \text{if } q_i^{-1}(C)=\{z_2=0\},\\
2-\lambda_i-\lambda_i^{-1}=-\lambda_i(1-\lambda_i^{-1})^2 \leqslant 0 & \text{if } q_i^{-1}(C)=\{z_1z_2=0\}.
\end{array}
\right. $$

Together with the Camacho-Sad formula (see Proposition \ref{prop:CS_orbifold}) and using $C^2=0$, this shows that $\lambda_i=1$ for all $i\in I$ as above. 

\medskip

\noindent\textit{Step 3.} By Step 2, for any $i \in I$ such that $p_i(X_i) \cap C \neq \emptyset$, $p_i^{-1}\sG$ is defined by a closed $1$-forms with zero set of codimension at least two. But then Lemma \ref{lemma:residue_orbifold} yields a contradiction since $c_1(\sN_\sL)\cdot C = 2$.
This finishes the proof of the claim.
\end{proof}

Next, we show that, shrinking $Y^\circ$, if necessary, $\psi^{-1}(y)$ is irreducible for every $y \in Y^\circ$.
We argue by contradiction again and assume that there is an irreducible hypersurface $D$ in $Y$ such that $\psi^{-1}(y)$ is reducible for a general point $y \in D$.
We maintain notation of Step 1 of the proof of Claim \ref{claim:irreducibility}. By Claim \ref{claim:irreducibility}, there is
an irreducible component $C_1$ of $C$ which is not invariant under $\sL$. Since $C$ is reducible by assumption, we must have $C_1^2<0$. On the other hand, by Lemma \ref{lemma:log_canonical_invariant_curve}, we have $C_1^2 = K_\sL \cdot C_1 + C_1^2\ge 0$, yielding a contradiction. This shows that $\psi$ has irreducible fibers at codimension one points in $Y$.

Applying \cite[Theorem II.2.8]{kollar96}, we conlcude that $\psi^\circ:=\psi_{|X^\circ}$ is a $\mathbb{P}^1$-bundle. By choice of $Y^\circ$, $F:=\psi^{-1}(y)\cong\mathbb{P}^1$ is not tangent to $\sG$ for every $y \in Y^\circ$. Since $c_1(\sN_\sL)\cdot F =2$, we see that $\sG$ is transverse to $\psi$ along $F$, completing the proof of the lemma.
\end{proof}

\begin{proof}[Proof of Proposition \ref{prop:transverse}] We maintain notation and assumptions of Proposition \ref{prop:transverse}.
Let $(D_j)_{j\in J}$ be the possibly empty set of hypersurfaces $D$ in $Y$ such that $\psi^*D$ is not integral. Since $\psi$ is a Mori fiber space, we must have 
$\psi^*D_j=m_jG_j$ for some integer $m_j\ge 2$ and some prime divisor $G_j$. Let $U_j \subseteq Y$ be a Zariski open neighbourhood of the generic point of $D_j$ and let $g_j\colon V_j \to U_j$ be a cyclic cover that branches along $D_j\cap U_j$ with ramification index $m_j$.
The normalization $M_j$ of the fiber product $V_j\times_Y X$ has geometrically reduced fibers over general points of 
$g_j^{-1}(D_j\cap U_j)$ by \cite[Th\'eor\`eme 12.2.4]{ega28}. Applying Lemma \ref{lemma:P_bundle} to $\psi_j\colon M_j \to V_j$, we see that $\psi_j$ is a $\mathbb{P}^1$-bundle over a Zariski open set in $V_j$ whose complement has codimension at least two. Moreover, 
$\sG_{|M_j}$ induces a flat Ehresmann connection on this $\mathbb{P}^1$-bundle.
In particular, there exists an open subset $Y^\circ \subseteq Y_{\textup{reg}}$ with complement of codimension at least two such that 
the restriction $\psi^\circ$ of $\psi$ to $X^\circ:=\psi^{-1}(Y^\circ)$ has irreducible fibers. Using \cite[Th\'eor\`eme 12.2.4]{ega28} together with Lemma \ref{lemma:P_bundle} again,
we see that we may also assume without loss of generality that $\psi$ is a $\mathbb{P}^1$-bundle over 
$Y^\circ\setminus \cup_{j\in J}D_j$ and that $\sG$ is everywhere transverse to $\psi$ over $Y^\circ\setminus \cup_{j\in J}D_j$.
Recall that $Y$ is $\mathbb{Q}$-factorial (see \cite[Lemma 5.1.5]{kmm}). Since $\textup{codim}\, Y \setminus Y^\circ \ge 2$ and $K_\sG\equiv 0$,
we must have 
$$K_Y+\sum_{i\in I}\frac{m_i-1}{m_i}D_i\equiv 0.$$
On the other hand, the proof of \cite[Corollary 4.5]{fujino99} shows that the pair 
$\big(Y,\sum_{i\in I}\frac{m_i-1}{m_i}D_i\big)$ is klt. Applying \cite[Corollary V.4.9]{nakayama04}, we conclude that $K_Y+\sum_{i\in I}\frac{m_i-1}{m_i}D_i$ is torsion. Let $g \colon T \to Y$ be the index one cover of the pair $\big(Y,\sum_{i\in I}\frac{m_i-1}{m_i}D_i\big)$ (see \cite[Section 2.4]{shokurov_log_flips}). By construction, we have 
$$K_T \sim_\mathbb{Q}g^*\Big(K_Y+\sum_{i\in I}\frac{m_i-1}{m_i}D_i\Big) \sim_\mathbb{Q} 0.$$
Replacing $T$ 
by a further quasi-\'etale cover, we may therefore assume that $K_T \sim_\mathbb{Z}0$. Then $T$ 
has canonical singularities. Shrinking $Y^\circ$, if necessary, we may assume that $T^\circ$ is smooth, that the
normalization $M^\circ$ of the fiber product $T^\circ\times_Y X$ has reduced and irreducible fibers over $T^\circ$ and that the map $M^\circ \to X^\circ$ is a quasi-\'etale cover. We may finally assume that
$M^\circ$ is a $\mathbb{P}^1$-bundle over $T^\circ$ and that
$\sG_{|M^\circ}$ yields a flat Ehresmann connection on $M^\circ \to T^\circ$ (see Lemme \ref{lemma:P_bundle}).
\end{proof}

\section{Algebraic integrability I}\label{section:algebraic_integrability_1}
 
The following is the main result of this section. We confirm the Ekedahl-Shepherd-Barron-Taylor conjecture (see \cite{esbt}) for mildly singular codimension one foliations with trivial canonical class on projective varieties with klt singularities and $\nu(X)=-\infty$.

\begin{thm}\label{thm:grothendieck_katz}
Let $X$ be a normal complex projective variety with klt singularities, and let $\sG$ be a codimension one foliation on 
$X$. Suppose that $\sG$ is canonical, and that it is closed under $p$-th powers for almost all primes $p$.
Suppose furthermore that $K_X$ is not pseudo-effective, and that $K_\sG\equiv 0$. 
Then $\sG$ is algebraically integrable. 
\end{thm}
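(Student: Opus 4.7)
The plan is to exploit the non-pseudo-effectivity of $K_X$ to run an MMP on $X$ and reduce to a Mori fiber space $\psi\colon X \to Y$, and then to apply Proposition~\ref{prop:transverse} to this Mori fiber space. The $p$-th power closure hypothesis plays a double role: it provides the local analytic structure needed to invoke Proposition~\ref{prop:transverse}, and, once we have a $\mathbb{P}^1$-bundle with a flat Ehresmann connection, it forces the leaves to be algebraic through a Grothendieck-Katz-type argument.

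\textbf{Reduction to a Mori fiber space.} Since $K_X$ is not pseudo-effective and $X$ is klt, a $K_X$-MMP terminates with a Mori fiber space $\psi\colon X' \to Y$. I would check that the birational transform $\sG'$ of $\sG$ inherits all the hypotheses: the canonical property via Lemma~\ref{lemma:singularities_birational_morphism}; the numerical triviality of $K_{\sG'}$, since $K_\sG \equiv 0$ and each MMP step is $K_X$-negative on the contracted/flipped curves; and the $p$-th power closure, since each step is an isomorphism in codimension one. Replacing $X$ by $X'$, I may assume that $X$ itself admits a Mori fiber space $\psi\colon X \to Y$.

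\textbf{Case split on the Mori fiber space and local structure.} Let $F$ be a general fiber of $\psi$. If $F$ is tangent to $\sG$, then $\sG$ fits in a sequence $0\to T_{X/Y} \to \sG \to \psi^{[*]}\sG_Y \to 0$ for some codimension one foliation $\sG_Y$ on $Y$, and algebraic integrability of $\sG$ reduces to that of $\sG_Y$; the hypotheses pass to $\sG_Y$ (with care if $K_Y$ becomes pseudo-effective, in which case one would pass to yet another birational model), and I would conclude by induction on $\dim X$. If $F$ is transverse to $\sG$, I invoke Proposition~\ref{prop:transverse} after verifying its required local normal form: by a theorem in the spirit of Jouanolou and Ekedahl-Shepherd-Barron-Taylor on $p$-closed codimension one foliations, the $p$-th power closure forces $\sG$, outside a codimension three subset and on a suitable quasi-\'etale $\mathbb{Q}$-cover, to be locally defined either by a closed $1$-form with zero set of codimension $\ge 2$, or by $x_2\,dx_1 + \lambda\, x_1\,dx_2$ with $\lambda \in \mathbb{Q}_{>0}$.

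\textbf{Conclusion and main obstacle.} Proposition~\ref{prop:transverse} then yields a finite Galois cover $g\colon T \to Y$ with $K_T\sim_\mathbb{Z} 0$ and a $\mathbb{P}^1$-bundle $M^\circ \to T^\circ$, quasi-\'etale over $X^\circ$, on which the pullback of $\sG$ defines a flat Ehresmann connection. Since $M^\circ \to X^\circ$ is quasi-\'etale, the pulled-back foliation inherits the $p$-th power closure; a Grothendieck-Katz / Bost style argument then forces the monodromy, which is valued in $\mathrm{PGL}_2$, to be finite, whence algebraic integrability of the pullback, and of $\sG$ itself. The main obstacle will be in making this last step rigorous: converting the $p$-th power closure on the $\mathbb{P}^1$-bundle into algebraicity of the leaves sits at the heart of the Ekedahl-Shepherd-Barron-Taylor conjecture; the rigidity provided by $K_\sG\equiv 0$ together with the Baum-Bott and Camacho-Sad calculus of Section~3 should be the key tools to close this gap.
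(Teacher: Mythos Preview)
Your overall architecture matches the paper's, but there is a genuine gap in the case split, and your endgame does not match how the paper actually concludes.

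\textbf{The missing step: relative dimension one.} Proposition~\ref{prop:transverse} has the hypothesis $\dim Y = \dim X - 1$, and you invoke it without establishing this. Your dichotomy ``$F$ tangent to $\sG$'' versus ``$F$ transverse to $\sG$'' does not resolve the issue: the tangent case is in fact vacuous (if $F$ were tangent, Lemma~\ref{lemma:residue_orbifold} would give $c_1(\sN_\sG)_{|F}\equiv 0$, contradicting that $-K_X$ is $\psi$-ample), so you are always in the ``transverse'' case, but when $\dim F\ge 2$ the restricted foliation $\sH:=\sG_{|F}$ is a genuine codimension one foliation on the Fano $F$ and you cannot apply Proposition~\ref{prop:transverse}. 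The paper's Step~2 is precisely this: one shows $K_\sH\equiv 0$ and $\sH$ is canonical and $p$-closed, cuts down to a surface $S\subset F$, and uses the \emph{orbifold Baum--Bott formula} (Proposition~\ref{prop:BB_orbifold}) together with the local normal forms coming from $p$-closure to get $c_1(\sN_\sL)^2\le 0$, contradicting $c_1(\sN_\sL)^2=K_F^2\cdot H^{\dim F-2}>0$. So the Baum--Bott calculus of Section~3 is used \emph{here}, not in the endgame.

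\textbf{The endgame.} Once you have the $\mathbb{P}^1$-bundle $M^\circ\to T^\circ$ with a flat connection, the paper does \emph{not} appeal to a direct Grothendieck--Katz/Bost argument on the monodromy. Instead it uses that $T$ has canonical singularities with $K_T\sim_{\mathbb Z}0$: after a quasi-\'etale cover, $T\cong A\times Z$ with $A$ abelian and $\widetilde q(Z)=0$, and by \cite[Theorem~I]{GGK} the induced representation $\pi_1(Z_{\textup{reg}})\to\textup{PGL}(2,\mathbb{C})$ has finite image. After a further cover the bundle and connection are pulled back from a $\mathbb{P}^1$-bundle $P\to A$ over the abelian factor, and algebraic integrability follows from \cite[Proposition~9.3]{cd1fzerocan}. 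Your proposed route via $p$-curvature on the $\mathbb{P}^1$-bundle is plausible in spirit but is exactly the open Ekedahl--Shepherd-Barron--Taylor conjecture; the paper sidesteps this by exploiting the very special geometry of $T$.
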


\begin{proof} For the reader's convenience, the proof is subdivided into a number of steps. 

\medskip

\noindent\textit{Step 1.} Arguing as in Steps $1-2$ of the proof of \cite[Theorem 9.4]{cd1fzerocan}, we see that we may assume without loss of generality that $X$ is $\mathbb{Q}$-factorial and that there exists a Mori fiber space $\psi \colon X \to Y$.

\medskip

\noindent\textit{Step 2.} We first show that $\dim X - \dim Y = 1$. We argue by contradiction and assume
that $\dim X - \dim Y \ge 2$. Let $F$ be a general fiber of $\psi$. Note that $F$ has klt singularities, and that $K_F \sim_\mathbb{Z} {K_X}_{|F}$ by the adjunction formula. Moreover, $F$ is a Fano variety by assumption. 
Let $\sH$ be the foliation on $F$ induced by $\sG$. Since $c_1(\sN_\sG)\equiv -K_X$ is relatively ample, we see that $\sH$ has codimension one by Lemma \ref{lemma:residue_orbifold}.
By \cite[Proposition 3.6]{cd1fzerocan}, we have $K_\sH\sim_\mathbb{Z} {K_\sG}_{|F} - B$ for some effective Weil divisor $B$ on $F$. Suppose that $B\neq 0$.
Applying \cite[Theorem 4.7]{campana_paun19} to the pull-back of $\sH$ on a resolution of $F$, we see that $\sH$ is uniruled. This implies that $\sG$ is uniruled as well since $F$ is general. But this contradicts \cite[Proposition 4.22]{cd1fzerocan}, and shows that $B=0$. By \cite[Proposition 4.22]{cd1fzerocan} applied to $\sH$, we see that $\sH$ is canonical. Finally, one readily checks that $\sH$ is closed under $p$-th powers for almost all primes $p$.

Let $S \subseteq F$
be a two dimensional complete intersection of general elements of a very ample linear system $|H|$ on $F$. 
Notice that $S$ has klt singularities and hence quotient singularities.
Let $\sL$ be the foliation of rank one on $S$ induced by $\sH$.
By \cite[Proposition 3.6]{cd1fzerocan}, we have $\det\sN_\sL \cong (\det\sN_\sH)_{|S}$. It follows that 
\begin{equation}\label{BB_formula0}
c_1(\sN_\sL)^2 = K_F ^2 \cdot H^{\dim F -2} > 0.
\end{equation}

\bigskip

Recall from \cite[Proposition 9.3]{greb_kebekus_kovacs_peternell10} that there is an open set $F^\circ \subseteq F$ with quotient singularities whose complement in $F$ has codimension at least three. Let $\{p_\alpha\colon F_\alpha\to F^\circ\}_{\alpha \in A}$ be a quasi-\'etale $\mathbb{Q}$-structure on $F^\circ$ (see Fact \ref{fact:quotient_singularities_Q_strucutures}). Notice that $p_\alpha^{-1}\sH$ is obviously closed under $p$-th powers for almost all primes $p$. By \cite[Corollary 7.8]{lpt}, we see that we may assume that $p_\alpha^{-1}{\sH}_{|F^\circ}$ is defined at singular points (locally for the analytic topology) by the $1$-form $x_{2}dx_{1} + \lambda x_{1}dx_{2}$ where $(x_{1},\ldots,x_{n})$ are local analytic coordinates on $F_\alpha$ and $\lambda\in \mathbb{Q}_{>0}$.
By general choice of $S$, we may assume that $S \subset F^\circ$ and that the collection of charts $\{p_\alpha\colon F_\alpha\to F^\circ\}_{\alpha \in A}$ induces a quasi-\'etale $\mathbb{Q}$-structure $\{{p_\alpha}_{|S_\alpha}\colon S_\alpha \to S\}_{\alpha \in A}$ on $S$, where $S_\alpha:=p_\alpha^{-1}(S)$. We may assume in addition that
$({p_\alpha}_{|S_\alpha})^{-1}\sL$ is defined at a singular point $x$ (locally for the analytic topology) 
by a closed holomorphic $1$-form with isolated zeroes or
by the  $1$-form $x_{2}dx_{1} + \lambda x_{1}dx_{2}$ where $(x_{1},x_{2})$ are local analytic coordinates on $S_\alpha$ and $\lambda\in \mathbb{Q}_{>0}$. In the former case, we have $\textup{BB}\big(({p_\alpha}_{|S_\alpha})^{-1}\sL,x\big) = 0$, and in the latter case, we have 
$\textup{BB}\big(({p_\alpha}_{|S_\alpha})^{-1}\sL,x\big) = -\lambda(1-\lambda^{-1})^2$. 
Applying Proposition \ref{prop:BB_orbifold}, we obtain 
$$c_1(\sN_\sL)^2 =\sum_x \textup{BB}^\mathbb{Q}(\sL,x) \le 0.$$
But this contradicts inequality \eqref{BB_formula0} above, and shows that $\dim X - \dim Y = 1$.

\medskip

By \cite[Proposition 9.3]{greb_kebekus_kovacs_peternell10}, there exists a closed subset $Z \subset X$ of codimension at least $3$ such that $X \setminus Z$ has quotient singularities.
Let $\{p_\beta\colon X_\beta\to X\setminus Z\}_{\beta \in B}$ be a quasi-\'etale $\mathbb{Q}$-structure on $X\setminus Z$ (see Fact \ref{fact:quotient_singularities_Q_strucutures}). Notice that $p_\beta^{-1}\sG$ is obviously closed 
under $p$-th powers for almost all primes $p$. By \cite[Corollary 7.8]{lpt}, we see that there exists a closed subset $Z_\beta \subseteq X_\beta$ of codimension at least $3$ in $X_\beta$ such that ${p_\beta^{-1}\sG}_{|X_\beta\setminus Z_\beta}$ is defined at singular points (locally for the analytic topology) by the 
$1$-form $x_{2}dx_{1} + \lambda x_{1}dx_{2}$ where $(x_{1},\ldots,x_{n})$ are local analytic coordinates on $X_\beta$ and $\lambda\in \mathbb{Q}_{>0}$. Therefore, Proposition \ref{prop:transverse} applies. There exists an open subset $Y^\circ \subseteq Y_{\textup{reg}}$ with complement of codimension at least two and a finite cover $g\colon T \to Y$ such that the following holds. Set $T^\circ:=g^{-1}(Y^\circ)$ and $X^\circ:=\psi^{-1}(Y^\circ)$. 
\begin{enumerate}
\item The variety $T$ has canonical singularities and $K_T \sim_\mathbb{Z}0$; $T^\circ$ is smooth.
\item The normalization $M^\circ$ of the fiber product $T^\circ\times_Y X$ is a $\mathbb{P}^1$-bundle over $T^\circ$ and the map $M^\circ \to X^\circ$ is a quasi-\'etale cover.
\item The pull-back of $\sG_{|X^\circ}$ on $M^\circ$ yields a flat Ehresmann connection on $M^\circ \to T^\circ$.
\end{enumerate}

\noindent\textit{Step 3.} By \cite[Corollary 3.6]{gkp_bo_bo} applied to $T$, we see that there exists an abelian variety $A$ as well as
a projective variety $Z$ with $K_{Z}\sim_\mathbb{Z}0$ and augmented irregularity $\wt q(Z) = 0$ (we refer to \cite[Definition 3.1]{gkp_bo_bo} for this notion), and a quasi-\'etale cover $f\colon  A \times Z\to T$. 

Recall that $f$ branches only on the singular set of $T$, so that $f^{-1}(T^\circ)$ is smooth. On the other hand, 
since $f^{-1}(T^\circ)$ has complement of codimension at least two in $A\times Z_\textup{reg}$, we have 
$\pi_1\big(A\times Z_\textup{reg}\big) \cong \pi_1\big(f^{-1}(T^\circ)\big)$.
Now, consider the representation
$$\rho\colon \pi_1\big(A\times Z_\textup{reg}\big) \cong \pi_1\big(f^{-1}(T^\circ)\big) \to 
\pi_1\big(T^\circ\big) \to \textup{PGL}(2,\mathbb{C})$$
induced by $\sG_{|M^\circ}$. By \cite[Theorem I]{GGK}, the induced representation 
$$ \pi_1\big(Z_\textup{reg}\big) \to \pi_1\big(A\big)\times\pi_1\big(Z_\textup{reg}\big)\cong
\pi_1\big(A\times Z_\textup{reg}\big) \to \textup{PGL}(2,\mathbb{C})$$
has finite image.
Thus, replacing $Z$ by a quasi-\'etale cover, if necessary, we may assume without loss of generality that 
$\rho$ factors through the projection $\pi_1\big(A\times Z_\textup{reg}\big) \to \pi_1(A)$. 
Let $P$ be the corresponding $\mathbb{P}^1$-bundle over $A$. The natural projection $P \to A$ comes with a flat connection
$\sG_P \subset T_P$. By the GAGA theorem, $P$ is a projective variety. By assumption, its pull-back to 
$A\times Z_\textup{reg}$ agrees with $f^{-1}(T^\circ)\times_{T^\circ} M^\circ$ over $f^{-1}(T^\circ)$. Moreover, the pull-backs 
on $A\times Z_\textup{reg}$ of the foliations $\sG$ and $\sG_P$ agree as well, wherever this makes sense. In particular, $\sG$ is algebraically integrable if and only if so is $\sG_P$.
Now, one readily checks that $\sG_P$ is closed under $p$-th powers for almost all primes $p$.
Theorem \ref{thm:grothendieck_katz} then follows from \cite[Proposition 9.3]{cd1fzerocan}.
\end{proof}

\section{Algebraic integrability II}\label{section:algebraic_integrability_2}

In this section, we provide an algebraicity criterion for leaves of mildly singular codimension one algebraic foliations with numerically trivial canonical class on klt spaces $X$ with $\nu(X)=1$ (see Theorem \ref{thm:algebraic_integrability_nu_un}). We confirm the Ekedahl-Shepherd-Barron-Taylor conjecture in this special case.

\medskip

We will need the following auxiliary result, which might be of independent interest.

\begin{prop}\label{prop:general_type}
Let $X$ be a projective variety with klt singularities and let $\beta\colon Z \to X$ be a resolution of singularities. 
Let also $\phi \colon Z \to \mathfrak{H}:=\mathbb{D}^N/\Gamma$ be a generically finite morphism to a quotient of the polydisc $\mathbb{D}^N$ with $N \ge 2$ by 
an arithmetic irreducible lattice in $\Gamma \subset \textup{PSL}(2,\mathbb{R})^N$. Let $\sH$ be a codimension one foliation on $\mathfrak{H}$ induced by one of the tautological foliations on $\mathbb{D}^N$, and denote by $\sG$ the induced foliation on $X$. 
Suppose that $\phi(Z)$ is not tangent to $\sH$. Suppose furthermore that there exists an open set $X^\circ \subseteq X$ with complement of codimension at least $3$
and an analytic quasi-\'etale $\mathbb{Q}$-structure $\{p_i:X_i\to X^\circ\}_{i \in I}$ on $X^\circ$ such that 
either $p_i^{-1}\sG$ is regular or it is given by the local $1$-form $d(x_{1}x_{2})$ where $(x_{1},\ldots,x_{n})$ are analytic coordinates on $X_i$. Then $X$ is of general type.
\end{prop}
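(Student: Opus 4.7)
The plan is to prove that $K_X$ is big, which, since $X$ has klt singularities and $\beta\colon Z\to X$ is a resolution, is equivalent to showing that $K_Z$ is big. The positivity will come from the Hilbert modular structure on $\mathfrak{H}=\mathbb{D}^N/\Gamma$. On a smooth toroidal compactification $\overline{\mathfrak{H}}$ with boundary divisor $\Delta$, the class $K_{\overline{\mathfrak{H}}}+\Delta$ is ample — this is a consequence of the Bergman metric on each factor $\mathbb{D}$ being $\Gamma$-invariant and descending to a nef class $L_i$ on $\mathfrak{H}$ (for $i=1,\ldots,N$) with $L_i^2=0$ but $L_1\cdots L_N>0$, so that $K_{\mathfrak{H}}\equiv L_1+\cdots+L_N$ is big. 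The tautological foliation $\sH$ (say tangent to factors $2,\ldots,N$) satisfies $K_{\sH}\equiv L_2+\cdots+L_N$ and $c_1(\sN_{\sH})\equiv -L_1$.

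Next, I would pull back via $\phi$ and use the ramification formula. Since $Z$ is projective, the image $\phi(Z)$ is compact in $\mathfrak{H}$ and hence does not meet the toroidal boundary in $\overline{\mathfrak{H}}$; in particular $\Delta|_{\phi(Z)}=0$ as a class, so $K_{\overline{\mathfrak{H}}}|_{\phi(Z)}=(K_{\overline{\mathfrak{H}}}+\Delta)|_{\phi(Z)}$ is the restriction of an ample divisor, hence big on $\phi(Z)$. Since $\mathfrak{H}$ has at worst quotient singularities (from the torsion part of $\Gamma$), the quasi-\'etale $\mathbb{Q}$-structure induced by the lattice action allows one to write $K_Z\sim_\mathbb{Q}\phi^*K_{\mathfrak{H}}+R$ with $R$ an effective $\mathbb{Q}$-divisor on $Z$. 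Generically finite pull-back preserves bigness, so $\phi^*K_{\mathfrak{H}}$ is big on $Z$; combined with effectivity of $R$, this gives $K_Z$ big, and hence $K_X$ big since $X$ is klt.

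The hypothesis that $\phi(Z)$ is not tangent to $\sH$ ensures that the induced foliation $\sG$ on $X$ is genuinely of codimension one (otherwise the pull-back would be the full tangent bundle and $\sG$ would not be defined). The hypothesis on the singularities of $\sG$ (regular, or defined by the $1$-form $d(x_1x_2)$ in the quasi-\'etale $\mathbb{Q}$-charts on $X^\circ$) plays a technical role in ensuring that the pull-backs of reflexive $\mathbb{Q}$-sheaves between $X$, $Z$, and $\mathfrak{H}$ behave well across codimension-two singular strata, and that the orbifold Baum-Bott and Camacho-Sad formulas of Section~\ref{section:algebraic_integrability_1} (Propositions \ref{prop:BB_orbifold} and \ref{prop:CS_orbifold}) can be invoked when comparing intersection numbers on the three spaces.

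The main obstacle I foresee is setting up the ramification formula $K_Z\sim_\mathbb{Q}\phi^*K_{\mathfrak{H}}+R$ in a way that combines coherently the quasi-\'etale $\mathbb{Q}$-structures on $X^\circ$ and on $\mathfrak{H}$, and rigorously verifying that $R$ is effective despite the presence of the nodal foliation singularities on $X$ and the quotient singularities coming from the lattice action. Settling this delicate point is precisely where the controlled singularity hypothesis on $\sG$, together with the orbifold formulas of Section~\ref{section:algebraic_integrability_1}, enters as an essential technical input.
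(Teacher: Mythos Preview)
Your overall instinct---exploit the positivity coming from the Bergman metric on $\mathfrak H$ and pull it back---is correct, and it is how the paper concludes in its final step. But the core of your argument has a genuine gap, and your diagnosis of where the foliation hypothesis enters is off-target.

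The claim $K_Z\sim_{\mathbb Q}\phi^*K_{\mathfrak H}+R$ with $R\ge 0$ does \emph{not} follow from $\mathfrak H$ having quotient singularities. Quotient singularities are klt but not canonical in general: a resolution of a singular point of $\mathfrak H$ can carry exceptional divisors with discrepancy $a<0$. If a prime divisor $D\subset Z$ is contracted by $\phi$ to such a point and its strict transform lands on an exceptional divisor of discrepancy $a$ with ramification index $e$, then the coefficient of $D$ in $R$ is $ea+(e-1)$, which is negative as soon as $a<-1+1/e$ (for instance any $a<0$ already fails when $e=1$). So effectivity of $R$ is exactly the obstruction, and the orbifold Baum--Bott and Camacho--Sad \emph{index formulas} you invoke (Propositions \ref{prop:BB_orbifold} and \ref{prop:CS_orbifold}) compute intersection numbers on surfaces; they say nothing about ramification along divisors contracted by $\phi$.

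The paper does not try to prove $R\ge 0$. Instead it shows that the bad situation above cannot occur, in two geometric steps. First (Claim \ref{claim:regular}), the rational map $\phi\circ\beta^{-1}$ extends to a morphism over $X^\circ$: the branch divisors of an auxiliary Galois cover are $\sG$-invariant (singular points of $\mathfrak H$ have a unique separatrix for $\sH$, and $\phi(Z)$ is not tangent to $\sH$), and then the local normal-crossing form $d(x_1x_2)$ forces, via Lemma \ref{lem:klt-pair}, the relevant pair to be klt, so fibres over $X^\circ$ are rationally chain connected and cannot map onto anything positive-dimensional in the hyperbolic smooth cover $\mathfrak H_1$. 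Second, no non-$\beta$-exceptional divisor $G\subset X$ is contracted to a point: if it were, $G$ would be $\sG$-invariant, and the Baum--Bott \emph{partial connection} (Lemma \ref{lemma:residue_orbifold}, which applies because the local defining $1$-forms are closed) forces a general surface slice $C=S\cap G$ to satisfy $C^2=0$, contradicting $C^2<0$ for a curve contracted by the generically finite morphism $(\phi\circ\beta^{-1})_{|S}$. Only after these two steps does the paper pass to a torsion-free sublattice $\Gamma_1\subset\Gamma$; the second step guarantees that the induced cover $f_1\colon X_1\to X$ is quasi-\'etale, whence $K_{X_1}\sim_{\mathbb Q}f_1^*K_X$, and then the bigness argument on the \emph{smooth} target $\mathfrak H_1$ (via the nef line bundles $\phi_2^*\sN_{\sE_k}^*$ mapping generically injectively into $\Omega^1_{Z_2}$) gives $\nu(X)=\dim X$.

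In short: the heart of the proof is ruling out divisor contractions to points of $\mathfrak H$, and that is precisely where the hypothesis on the singularities of $\sG$ enters, through Lemmas \ref{lem:klt-pair} and \ref{lemma:residue_orbifold}. Your sketch bypasses this and absorbs the difficulty into an unproved effectivity of $R$.
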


\begin{proof} For the reader's convenience, the proof is subdivided into a number of steps. By a result of Selberg, there exists a torsion-free subgroup 
$\Gamma_1$ of $\Gamma$ of finite index. Set $\mathfrak{H}_1:=\mathbb{D}^N/\Gamma_1$, and denote by $\pi \colon \mathfrak{H}_1 \to \mathfrak{H}$ the natural finite morphism. Recall that $\mathfrak{H}$ has isolated quotient singularities. It follows that $\pi$ is a quasi-\'etale cover since $N \ge 2$. One readily checks that there is only one separatrix for $\sH$ at any (singular) point.

\medskip

\noindent\textit{Step 1.} 
We first show the following.

\begin{claim}\label{claim:regular}
The rational map $\phi\circ\beta^{-1}\colon X \map \mathfrak{H}$ is a well-defined morphism over
$X^\circ$.
\end{claim}

\begin{proof}

Let $Z_1$ be the normalization of $Z\times_{\mathfrak{H}} \mathfrak{H}_1$, and let $X_1$ be the normalization of $X$ in the function field of $Z_1$.

\begin{center}
\begin{tikzcd}[row sep=large, column sep=large]
X_1\ar[d, "{f_1}"'] && Z_1 \ar[ll, "{\beta_1,\textup{ birational}}"'] \ar[d, "{g_1}"]\ar[rrr, "{\phi_1,\textup{ generically finite}}"] & & & \mathfrak{H}_1\ar[d, "{\pi,\textup{ quasi-\'etale}}"]\\
X && Z \ar[ll, "{\beta,\textup{ birational}}"]\ar[rrr, "{\phi,\textup{ generically finite}}"']&&& \mathfrak{H}.
\end{tikzcd}
\end{center}
Let $(D_j)_{j\in J}$ be the set of codimension one irreducible components of the branched locus of $f_1$. Observe that $\phi$ maps
$\beta_*^{-1}(D_j)$ to a (singular) point. It follows that $D_j$ is invariant under $\sG$ since there is only one separatrix for $\sH$ at any point and 
$\phi(Z)$ is not tangent to $\sH$ by assumption.
Let $\wh{X}_1 \to X_1$ be a finite cover such that the induced cover $p \colon  \wh{X}_1\to X$ is Galois. We may assume without loss of generality that 
$p$ is quasi-\'etale away from the branch locus of $f_1$. Therefore, there exist positive integers $(m_j)_{j\in J}$ 
such that 
$$K_{\wh{X}_1}= p^*\Big(K_X+\sum_{j\in J}\frac{m_j-1}{m_j}D_j\Big).$$ 
By Lemma \ref{lem:klt-pair} below, the pair $\big(X^\circ, \sum_{j\in J}\frac{m_j-1}{m_j}{D_j}_{|X^\circ}\big)$ is klt. 
It follows that $\wh{X}_1^\circ:=p^{-1}(X_1^\circ)$ has klt singularities as well.

Suppose that the rational map $\phi\circ\beta^{-1}\colon X \map \mathfrak{H}$ is not a well-defined morphism on $X^\circ$. By the rigidity lemma, there exist $x\in X^\circ$ such that $\dim \phi\big(\beta^{-1}(x)\big) \ge 1$.
Let $\wh{Z}_1$ be the normalization of $\wh{X}_1 \times_{X_1} Z_1$, and denote by 
$\wh{\beta}_1\colon \wh{Z}_1 \to \wh{X}_1$ and $\wh{\phi}_1\colon \wh{Z}_1 \to \mathfrak{H}_1$ the natural morphisms. Then there is a point
$x_1 \in \wh{X}_1^\circ:=p^{-1}(X_1^\circ)$ with $p(x_1)=x$ such that $\dim \wh{\phi}_1\big(\wh{\beta}_1^{-1}(x)\big) \ge 1$. 
On the other hand, $\wh{\beta}_1^{-1}(x)$ is rationally chain connected since 
$\wh{X}_1^\circ$ has klt singularities (see \cite[Corollary 1.6]{hacon_mckernan}). This yields a contradiction since 
$\mathfrak{H}_1$ is obviously hyperbolic. This finishes the proof of the claim.
\end{proof}

\noindent\textit{Step 2.} Let $F \subseteq Z$ be a prime divisor which is not $\beta$-exceptional and set $G=\beta(F)$.
We show that $\dim \phi(F) \ge 1$. We argue by contradiction and assume that $\dim \phi(F) =0$. Let $S \subseteq X$ be a two dimensional complete intersection of general elements of a very ample linear system on $X$. We may assume without loss of generality that $S$ is contained in $X^\circ$ and that it has klt singularities. Set $C:=S\cap G$. By Step 1, the rational map $\phi\circ\beta^{-1}$ is a well-defined morphism in a neighbourhood of $S$. But then $C^2<0$ since $C$ is contracted by the generically finite morphism
${\phi\circ\beta^{-1}}_{|S}\colon S\to \mathfrak{H}$. On the other hand, arguing as in Step 1, we see that $G$ must be invariant under $\sG$. Applying 
Lemma \ref{lemma:residue_orbifold}, we see that $C^2=C \cdot G =0$, yielding a contradiction.

\medskip

\noindent\textit{Step 3.} We use the notation of Step 1. Recall that $\pi\colon \mathfrak{H}_1  \to \mathfrak{H}$ is \'etale away from finitely many points.  
By Step 2, the natural map $Z\times_\mathfrak{H} \mathfrak{H_1} \to Z$ is a quasi-\'etale cover away from the exceptional locus of $\beta$. 
This immediately implies that $f_1$ is a quasi-\'etale cover. Let $Z_2\to Z_1$ be a resolution of singularities. 
We obtain a diagram as follows:

\begin{center}
\begin{tikzcd}[row sep=large, column sep=large]
X_1\ar[d, "{f_1, \textup{ quasi-\'etale}}"'] && Z_2 \ar[ll, "{\beta_2,\textup{ birational}}"'] \ar[d, "{g_2}"]\ar[rrr, "{\phi_2,\textup{ generically finite}}"] & & & \mathfrak{H}_1\ar[d, "{\pi,\textup{ quasi-\'etale}}"]\\
X && Z \ar[ll, "{\beta,\textup{ birational}}"]\ar[rrr, "{\phi,\textup{ generically finite}}"']&&& \mathfrak{H}.
\end{tikzcd}
\end{center}
Let $\sE_k$ ($1 \le k \le N$) be the codimension one regular foliations on $\mathfrak{H}_1$ induced by the tautological foliations 
on $\mathbb{D}^N$ so that $\Omega^1_{\mathfrak{H}_1}\cong \bigoplus_{1\le k \le N} \sN^*_{\sE_i}$. 
Set $n:=\dim X$. We may assume without loss of generality that the natural map 
$\bigoplus_{1\le k \le n} \phi_2^*\sN^*_{\sE_i} \to \Omega_{Z_2}^1$ is generically injective.
Now observe that the line bundle $\sN_{\sE_i}^*$ is hermitian semipositive, so that 
$\phi_2^*\sN^*_{\sE_i}$ is nef. On the other hand,
we have $c_1(\phi_2^*\sN^*_{\sE_1})\cdot\cdots\cdot c_1(\phi_2^*\sN^*_{\sE_n})>0$.
This immediately implies that $\kappa(Z_2)=\nu(Z_2)=\dim Z_2$. It follows that $\kappa(X_1)=\nu(X_1)=\dim X_1$ since $\beta_2$ is a birational morphism. Applying \cite[Proposition 2.7]{nakayama04}, we see that 
$\nu(X)=\nu(X_1)=\dim X_1=\dim X$ since $K_{X_1}\sim_\mathbb{Q}f_1^* K_X$. This completes the proof of the proposition.
\end{proof}

\begin{lemma}\label{lem:klt-pair}
Let $X$ be a variety of dimension $n$ with quotient singularities and let $\sG$ be a codimension one foliation on $X$. Let $\{p_i:X_i\to X\}_{i \in I}$ be an analytic quasi-\'etale $\mathbb{Q}$-structure on $X$. Suppose that 
either $p_i^{-1}\sG$ is regular or it is given by the $1$-form 
$x_{2}dx_{1} + \lambda_i x_{1}dx_{2}$ where $(x_{1},\ldots,x_{n})$ are analytic coordinates on $X_i$ and $\lambda_i \in \mathbb{Q}_{>0}$.
Let $(D_j)_{j\in J}$ be pairwise distinct prime divisors on $X$. Suppose that $D_j$ is invariant under $\sG$ for every $j \in J$. Then the pair $\big(X,\sum_{j\in J} a_jD_j\big)$ has klt singularities for any real numbers $0\leq a_j<1$.
\end{lemma}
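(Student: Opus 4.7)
The plan is to verify the klt property analytically locally, using the quasi-\'etale charts $p_i$ to pass to the smooth varieties $X_i$ where the structure of the boundary is controlled by the local form of $\sG$. Since $X$ has quotient singularities it is $\mathbb{R}$-factorial, so $K_X+\sum_j a_j D_j$ is $\mathbb{R}$-Cartier. Each $p_i$ being \'etale in codimension one yields $K_{X_i}\sim p_i^*K_X$ and forces $p_i^*D_j$ to be reduced; by the standard descent of klt along quasi-\'etale covers, $(X,\sum_j a_j D_j)$ is klt over $p_i(X_i)$ if and only if $(X_i,\sum_j a_j p_i^*D_j)$ is klt. Since the charts cover $X$ and each $X_i$ is smooth, it suffices to show that at every $y\in X_i$ the boundary $\sum_j a_j p_i^*D_j$ is analytically supported on a simple normal crossings divisor with every analytic branch carrying coefficient strictly less than one.

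Fix $y\in X_i$. If $p_i^{-1}\sG$ is regular at $y$, Frobenius yields analytic coordinates $(u_1,\ldots,u_n)$ in which the foliation is defined by $du_1=0$, and the only invariant analytic prime hypersurface through $y$ is $\{u_1=0\}$. If $p_i^{-1}\sG$ is given by $\omega=x_2\,dx_1+\lambda x_1\,dx_2$ with $\lambda\in\mathbb{Q}_{>0}$, the only invariant analytic prime hypersurfaces through $y$ are $\{x_1=0\}$ and $\{x_2=0\}$: a hypothetical branch of the form $\{x_2=g(x_1)\}$ with $g(0)=0$ would have to satisfy $g+\lambda x_1 g'=0$, whose nontrivial solutions $g=C x_1^{-1/\lambda}$ fail to be holomorphic at $0$ since $\lambda>0$. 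Therefore the analytic support of $\sum_j p_i^*D_j$ at $y$ is contained in $\{u_1=0\}$ or in $\{x_1 x_2=0\}$, which is snc in either case.

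It remains to bound the coefficients. Distinct prime divisors $D_j\neq D_{j'}$ on $X$ have no common algebraic prime component in their pullbacks on $X_i$, since any such component would project to a common component of $D_j$ and $D_{j'}$. Moreover, two distinct irreducible algebraic hypersurfaces $F_1\neq F_2$ on $X_i$ cannot share an analytic branch at $y$: otherwise $F_1\cap F_2$ would contain an analytic set of dimension $n-1$, contradicting that $F_1\cap F_2$ is a proper algebraic subvariety of $F_1$ and hence has dimension at most $n-2$. Combining these observations, each analytic branch of the support at $y$ lies in the pullback of a unique $D_j$ with multiplicity one, so its coefficient in the boundary equals some $a_j<1$. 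Thus $(X_i,\sum_j a_j p_i^*D_j)$ is analytically snc with coefficients in $[0,1)$ at every point, hence klt. The key technical ingredient, and the main obstacle, is the separatrix computation in the singular case: the positivity of $\lambda$ is essential, as it rules out any non-axial analytic invariant branches that would otherwise spoil the snc structure of the pair.
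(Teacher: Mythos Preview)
Your proof is correct and follows the same route as the paper's: pull back to the smooth quasi-\'etale charts $X_i$, verify that the invariant boundary there is simple normal crossings with coefficients in $[0,1)$, and descend the klt property along the quasi-\'etale cover. The paper is terser---it simply asserts that $\sum_j C_j$ has normal crossing support once the local model is known---whereas you spell out both the separatrix analysis and the coefficient bookkeeping. One minor point: your ODE argument for the singular model $x_2\,dx_1+\lambda x_1\,dx_2$ is phrased for two variables and literally treats only graph-like branches $\{x_2=g(x_1)\}$, so it does not cover singular (cuspidal) separatrices or make the reduction from dimension $n$ explicit; the quickest fix is to observe that for $\lambda=a/b\in\mathbb{Q}_{>0}$ the monomial $x_1^b x_2^a$ is a holomorphic first integral, hence any irreducible invariant hypersurface lies in a level set $\{x_1^b x_2^a=c\}$, and only the level $c=0$ meets the singular locus $\{x_1=x_2=0\}$.
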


\begin{proof}It suffices to prove the statement locally on $X$ for the analytic topology. 
We may therefore assume without loss of generality that there exist a (connected) smooth analytic complex manifold $Y$
and a finite Galois holomorphic map $p\colon  Y\to X$, totally branched over the singular locus and \'etale outside of the singular set.
We may also assume that $p^{-1}\sG$ is given by the $1$-form $y_{2}dy_{1} + \lambda y_{1}dy_{2}$, where $\lambda \in \mathbb{Q}_{>0}$ and $(y_{1},\ldots,y_{n})$ are analytic coordinates on $Y$. This immediately implies that the divisor 
$\sum_{j\in J} C_i$ has normal crossing support, where $C_j:=p^{-1}(D_j)$. Therefore,
the pair $\big(Y,\sum_{j\in J} a_jC_j\big)$ has klt singularities for any real numbers $0\leq a_j<1$.
On the other hand, we have 
$$K_Y+\sum_{j\in J}a_jC_j = p^*\big(K_X+\sum_{j\in J}a_jD_j\big)$$ 
and thus $\big(X,\sum_{j\in J}a_jD_j\big)$ has klt singularities as well. This finishes the proof of the lemma.
\end{proof}

The following is the main result of this section. 

\begin{thm}\label{thm:algebraic_integrability_nu_un}
Let $X$ be a normal projective variety with klt  singularities, and let $\sG$ be a codimension one foliation on $X$. Suppose that $\sG$ is canonical with $K_\sG\equiv 0$ and that $\nu(X)=1$. Suppose in addition that $\sG$ is closed under $p$-th powers for almost all primes $p$. Then $\sG$ is algebraically integrable. 
\end{thm}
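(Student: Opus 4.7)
The proof will follow the overall architecture of the proof of Theorem~\ref{thm:grothendieck_katz}, with Proposition~\ref{prop:general_type} playing the role that Proposition~\ref{prop:transverse} did there. Since $K_X$ is now pseudo-effective with $\nu(X)=1$, we cannot reduce to a Mori fiber space; instead we shall invoke the transverse classification of codimension one foliations with numerically trivial canonical class to pin down the possible transverse structures of $\sG$ up to quasi-\'etale cover, and then exclude each non-algebraic possibility.

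First, exactly as in Steps 1--2 of the proof of Theorem~\ref{thm:grothendieck_katz}, I would carry out the preliminary reductions: replacing $X$ by a $\bQ$-factorialization and restricting to the open locus with quotient singularities via \cite[Proposition 9.3]{greb_kebekus_kovacs_peternell10}, and choosing a quasi-\'etale analytic $\bQ$-structure there; then \cite[Corollary 7.8]{lpt}, combined with the $p$-power closure hypothesis, forces the local analytic normal form of $\sG$ to be either a closed $1$-form with isolated zeroes or $x_2\, dx_1+\lambda x_1\, dx_2$ with $\lambda\in\bQ_{>0}$. This places us in the setting required by Proposition~\ref{prop:general_type} and by Lemma~\ref{lem:klt-pair}. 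The Loray--Pereira--Touzet transverse structure theorem, extended to klt ambient varieties via the tools developed in Sections~2--4, then gives, after a further quasi-\'etale cover, a trichotomy: $\sG$ is algebraically integrable, or is pulled back from one of the tautological foliations on a polydisc quotient $\mathfrak{H}=\mathbb{D}^N/\Gamma$, or is defined by a closed twisted rational $1$-form.

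In the first case we are done. In the second case one produces a generically finite morphism $\phi\colon Z\to\mathfrak{H}$ from a resolution of a suitable cover of $X$ such that $\sG$ is pulled back from the tautological foliation $\sH$; since $\phi$ is generically finite and $\dim\mathfrak{H}=\dim X$, the image $\phi(Z)$ equals $\mathfrak{H}$ and is \emph{a fortiori} not tangent to $\sH$, so Proposition~\ref{prop:general_type} applies and forces $X$ to be of general type, contradicting $\nu(X)=1<\dim X$ (the case $\dim X=1$ being trivial). In the third case, the closed rational $1$-form analysis of Section~7, combined with $\nu(X)=1$, will show that the foliation is induced by a dominant rational map to a smooth curve, hence algebraically integrable. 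The main obstacle will be ensuring that the trichotomy is actually available in the klt setting under the $p$-power closure assumption: the Baum--Bott and Camacho--Sad formulas on surfaces with quotient singularities established in Section~3 are the key technical inputs that make this klt extension of \cite{lpt} possible, and verifying the non-tangency hypothesis of Proposition~\ref{prop:general_type} in the hyperbolic case requires careful control of the local models produced by \cite[Corollary 7.8]{lpt}.
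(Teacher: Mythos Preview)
Your overall architecture---reduce to local normal forms via \cite[Corollary 7.8]{lpt}, then invoke Touzet's structure result and kill the polydisc-quotient case with Proposition~\ref{prop:general_type}---is the paper's strategy, but you are missing the crucial intermediate step, and without it the argument does not go through.

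The hypothesis of Proposition~\ref{prop:general_type} is not merely that the local model be $x_2\,dx_1+\lambda x_1\,dx_2$ with $\lambda\in\mathbb{Q}_{>0}$; it requires $\lambda=1$, i.e.\ the form $d(x_1x_2)$. The passage from $\lambda\in\mathbb{Q}_{>0}$ to $\lambda=1$ is exactly where the Baum--Bott formula on surfaces with quotient singularities (Proposition~\ref{prop:BB_orbifold}) is used: on a general two-dimensional complete intersection $S\subset X^\circ$ one has $c_1(\sN_\sL)^2=K_X^2\cdot H^{\dim X-2}$, while all orbifold Baum--Bott indices are $-\lambda(1-\lambda^{-1})^2\le 0$. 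For this to force $\lambda=1$ one needs $K_X^2\cdot H^{\dim X-2}\ge 0$, and that requires the preliminary reduction to the case where $K_X$ is \emph{movable} (as in Steps~1--2 of the proof of \cite[Theorem 10.4]{cd1fzerocan}), which you do not perform. Your plan jumps directly from the $\lambda\in\mathbb{Q}_{>0}$ normal form to ``this places us in the setting required by Proposition~\ref{prop:general_type}'', and that is a genuine gap.

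Two further points. First, the generic finiteness of $\phi\colon Z\to\mathfrak{H}$ is not automatic and does not follow from $\dim\mathfrak{H}=\dim X$ (which need not hold); the paper secures it by first reducing, via \cite[Proposition 8.14]{cd1fzerocan}, to the case where no positive-dimensional algebraic subvariety tangent to $\sG$ passes through a general point---a reduction you omit. Second, your third case (closed twisted rational $1$-form) does not arise here: under the $p$-power closure hypothesis, Touzet's theorem \cite[Theorem 6]{touzet_conpsef} applied to $\beta^{-1}\sG$ with $\nu\big(-c_1(\sN_{\beta^{-1}\sG})+\varepsilon E_1\big)=1$ already gives only the algebraic case or the polydisc-quotient case, so Section~7 is irrelevant to this theorem.
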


\begin{proof}For the reader's convenience, the proof is subdivided into a number of steps. 

\medskip

\noindent\textit{Step 1.} Applying \cite[Proposition 8.14]{cd1fzerocan} together with Lemma \ref{lemma:canonical_quasi_etale_cover}, we may assume without loss of generality that there is no positive-dimensional algebraic subvariety tangent to $\sG$ passing through a general point of $X$. To prove the statement, it then suffices to show that $\dim X=1$.

\medskip

\noindent\textit{Step 2.} Arguing as in Steps $1$ and $2$ of the proof of \cite[Theorem 10.4]{cd1fzerocan}, we see that we may also assume 
that $X$ is $\mathbb{Q}$-factorial and that $K_X$ is movable.

\medskip

\noindent\textit{Step 3.} Recall from \cite[Proposition 9.3]{greb_kebekus_kovacs_peternell10} that there is an open set $X^\circ \subseteq X$ with quotient singularities whose complement in $X$ has codimension at least three. Let $\{p_\alpha\colon X_\alpha\to X^\circ\}_{\alpha \in A}$ be a quasi-\'etale $\mathbb{Q}$-structure on $X^\circ$ (see Fact \ref{fact:quotient_singularities_Q_strucutures}). Notice that $p_\alpha^{-1}\sG$ is obviously closed under $p$-th powers for almost all primes $p$. By \cite[Corollary 7.8]{lpt}, we see that we may assume that $p_\alpha^{-1}{\sG}_{|X^\circ}$ is defined at singular points (locally for the analytic topology) by the 
$1$-form $x_{2}dx_{1} + \lambda x_{1}dx_{2}$ where $(x_{1},\ldots,x_{n})$ are local analytic coordinates on $X_\alpha$ and $\lambda\in \mathbb{Q}_{>0}$.

Let $S \subseteq X$ be a two dimensional complete intersection of general elements of a very ample linear system $|H|$ on $F$. 
We may assume that $S \subset X^\circ$ and that $S$ has klt singularities. Let also $\sL$ be the foliation of rank one on $S$ induced by $\sG$.
By general choice of $S$, we may also assume that the collection of charts $\{p_\alpha\colon X_\alpha\to X^\circ\}_{\alpha \in A}$ induces a quasi-\'etale $\mathbb{Q}$-structure $\{q_\alpha\colon S_\alpha \to S\}_{\alpha \in A}$ on $S$, where $S_\alpha:=p_\alpha^{-1}(S)$ and $q_\alpha:={p_\alpha}_{|S_\alpha}$, and that $q_\alpha^{-1}\sL$ is defined at a singular point $x$ (locally for the analytic topology) 
by a closed holomorphic $1$-form with isolated zeroes or
by the  $1$-form $x_{2}dx_{1} + \lambda x_{1}dx_{2}$ where $(x_{1},x_{2})$ are local analytic coordinates on $S_\alpha$ and $\lambda\in \mathbb{Q}_{>0}$. In the former case, we have $\textup{BB}(q_\alpha^{-1}\sL,x) = 0$, and in the latter case, we have 
$\textup{BB}(q_\alpha^{-1}\sL,x) = -\lambda(1-\lambda^{-1})^2$. On the other hand, by \cite[Proposition 3.6]{cd1fzerocan}, we have $\det\sN_\sL \cong (\det\sN_\sG)_{|S}$. It follows that 
\begin{equation*}
c_1(\sN_\sL)^2 = K_X ^2 \cdot H^{\dim F -2} \ge 0
\end{equation*}
since $K_X$ is movable by Step 2. This implies that $\lambda=1$ by Proposition \ref{prop:BB_orbifold}.

\medskip

\noindent\textit{Step 4.} Let $\beta \colon Z \to X$ be a resolution of singularities with exceptional set $E$, and suppose that $E$ is a divisor with simple normal crossings. Suppose in addition that the restriction of $\beta$ to $\beta^{-1}(X_\textup{reg})$ is an isomorphism. 
Let $E_1$ be the reduced divisor on $Z$ whose support is
the union of all irreducible components of $E$ that are invariant under $\beta^{-1}\sG$.
Note that $-c_1(\sN_\sG)\equiv K_X$ by assumption. By Proposition \cite[Proposition 4.9]{cd1fzerocan} and \cite[Remark 4.8]{cd1fzerocan}, there exists a rational number $0 \le \varepsilon <1$ such that 
$$\nu\big(-c_1(\sN_{\beta^{-1}\sG})+\varepsilon E_1\big)=\nu\big(-c_1(\sN_\sG)\big)=1.$$ 
By \cite[Theorem 6]{touzet_conpsef} applied to $\beta^{-1}\sG$, we may assume that there exists an arithmetic irreducible lattice $\Gamma$ of $\textup{PSL}(2,\mathbb{R})^N$ for some integer $N \ge 2$, as well as a morphism $\phi\colon Z \to \mathfrak{H}:=\mathbb{D}^N/\Gamma$ of quasi-projective varieties such that $\sG=\phi^{-1}\sH$, where $\sH$ is a weakly regular codimension one foliation on  
$\mathfrak{H}$ induced by one of the tautological foliations on the polydisc $\mathbb{D}^N$. Note that $\phi$ is generically finite as there is no  positive-dimensional algebraic subvariety tangent to $\sG$ passing through a general point of $X$. Moreover, $\phi(Z)$ is not tangent to $\sH$ since $\sG$ has codimension one.

Proposition \ref{prop:general_type} then says that $X$ must be of general type. Thus $\dim X = \nu(X)=1$, completing the proof of the theorem.
\end{proof}

The same argument used in the proof of Theorem \ref{thm:algebraic_integrability_nu_un} shows that the following holds using the fact that weekly regular foliations on smooth spaces are regular (we refer to \cite[Section 5]{cd1fzerocan} for this notion). 

\begin{thm}\label{thm:algebraic_integrability_nu_un_regular}
Let $X$ be a normal projective variety with klt singularities, and let $\sG$ be a weakly regular codimension one foliation on $X$. Suppose that $\sG$ is canonical with $K_\sG\equiv 0$. Suppose in addition that $\nu(X)=1$. Then $\sG$ is algebraically integrable. 
\end{thm}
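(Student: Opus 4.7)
The plan is to follow the proof of Theorem \ref{thm:algebraic_integrability_nu_un} essentially verbatim, replacing the use of $p$-th power closure by the (stronger) assumption that $\sG$ is weakly regular. First, as in Step 1 of that proof, I would apply \cite[Proposition 8.14]{cd1fzerocan} together with Lemma \ref{lemma:canonical_quasi_etale_cover} to reduce to the case where no positive-dimensional algebraic subvariety is tangent to $\sG$ at a general point; it then suffices to show $\dim X = 1$. Step 2 of the previous proof, which uses only the numerical hypotheses on $\sG$ and on $X$ together with the MMP, carries over unchanged and allows us to assume that $X$ is $\mathbb{Q}$-factorial and that $K_X$ is movable.

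The only place where $p$-th power closure intervened in the proof of Theorem \ref{thm:algebraic_integrability_nu_un} is Step 3, where \cite[Corollary 7.8]{lpt} supplied the singular local normal form $x_{2}dx_{1} + \lambda x_{1}dx_{2}$ on the charts of a quasi-\'etale $\mathbb{Q}$-structure and the Baum-Bott formula then forced $\lambda = 1$. In the weakly regular setting this step becomes vacuous: by \cite[Section 5]{cd1fzerocan}, a weakly regular foliation on a smooth variety is regular, so on the big open set $X^\circ \subseteq X$ with quotient singularities provided by \cite[Proposition 9.3]{greb_kebekus_kovacs_peternell10}, the pull-back $p_\alpha^{-1}\sG$ to each smooth chart $X_\alpha$ of a quasi-\'etale $\mathbb{Q}$-structure (Fact \ref{fact:quotient_singularities_Q_strucutures}) is regular, and is therefore locally defined by a closed non-vanishing holomorphic $1$-form. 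In particular, the local normal form hypothesis of Proposition \ref{prop:general_type} is automatically satisfied on $X^\circ$.

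Step 4 of the previous proof now applies without modification: choosing a resolution $\beta \colon Z \to X$ with simple normal crossing exceptional divisor $E$ and letting $E_1 \subseteq E$ be the union of $\beta^{-1}\sG$-invariant components, I use \cite[Proposition 4.9 and Remark 4.8]{cd1fzerocan} to find $0 \le \epsilon < 1$ with $\nu(-c_1(\sN_{\beta^{-1}\sG}) + \epsilon E_1) = \nu(-c_1(\sN_\sG)) = 1$; then \cite[Theorem 6]{touzet_conpsef} produces a generically finite morphism $\phi \colon Z \to \mathfrak{H} := \mathbb{D}^N/\Gamma$ with $N \ge 2$ and $\Gamma \subset \textup{PSL}(2,\mathbb{R})^N$ arithmetic, presenting $\beta^{-1}\sG$ as $\phi^{-1}\sH$ for some codimension one tautological foliation $\sH$. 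The morphism $\phi$ is generically finite since $\sG$ has no positive-dimensional algebraic leaves through a general point, and $\phi(Z)$ is not tangent to $\sH$ for dimension reasons. Proposition \ref{prop:general_type} then forces $X$ to be of general type, so $\dim X = \nu(X) = 1$ and $\sG = T_X$ is trivially algebraically integrable.

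There is no genuine obstacle beyond bookkeeping: the weakly regular hypothesis is a drop-in substitute, on charts, for the information previously extracted from $p$-th power closure via \cite[Corollary 7.8]{lpt} and Baum-Bott. The only thing to verify is that Steps 1, 2 and 4 of the proof of Theorem \ref{thm:algebraic_integrability_nu_un} never tacitly used $p$-th power closure beyond the derivation of this local normal form, which is the case.
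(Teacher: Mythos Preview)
Your proposal is correct and follows essentially the same approach as the paper. The paper itself gives only a one-line indication (``the same argument\ldots\ using the fact that weakly regular foliations on smooth spaces are regular''), and you have correctly expanded on this: the sole role of $p$-th power closure in the proof of Theorem~\ref{thm:algebraic_integrability_nu_un} was to obtain, via \cite[Corollary 7.8]{lpt}, the local normal form on smooth charts needed in Step~3, and weak regularity supplies this directly (indeed more: the foliation is regular on each chart, so the Baum--Bott step is vacuous and the hypotheses of Proposition~\ref{prop:general_type} are immediate).
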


\section{Foliations defined by closed rational $1$-forms}

Let $X$ be a normal projective variety, and let $\sG\subset T_X$ be a codimension one foliation. Suppose that $\sG$ is given by a closed rational $1$-form $\omega$ with values in a flat line bundle $\sL$. Then the twisted rational $1$-form $\omega$ is not uniquely determined by $\sG$ in general. The following result addresses this issue.

\begin{prop}\label{prop:flatness_torsion}
Let $X$ be a normal projective variety with klt singularities, and let $\sG\subset T_X$ be a codimension one foliation with canonical singularities. Suppose that $\sG$ is given by a closed rational $1$-form 
$\omega$ with values in a flat line bundle $\sL$ whose zero set has codimension at least two. 
Suppose furthermore that $K_X$ is not pseudo-effective, and that $K_\sG\equiv 0$. 
Then there exists a quasi-\'etale cover $f\colon X_1 \to X$ such that $f^{-1}\sG$ is given by a closed
rational $1$-form with zero set of codimension at least two.
\end{prop}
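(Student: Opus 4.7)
The plan is to first apply Theorem \ref{thm:grothendieck_katz} to deduce that $\sG$ is algebraically integrable, then use the resulting fibration over $\mathbb{P}^1$ and a residue analysis to show that the monodromy of $\sL$ is finite, and finally construct the desired cover as the base change along a suitable cyclic cover of the base.

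Since $\omega$ is a closed rational $1$-form with values in the flat bundle $\sL$, on any analytic flat trivialization of $\sL$ the restriction of $\omega$ is an ordinary closed rational $1$-form. Hence $\sG$ is locally defined by closed $1$-forms, so it is closed under $p$-th power operations for almost all primes $p$ (a standard consequence of Cartier's theorem). With the given hypotheses, Theorem \ref{thm:grothendieck_katz} applies and $\sG$ is algebraically integrable; let $\phi \colon X \dashrightarrow C$ be the associated fibration to a smooth projective curve. Since the restriction of $K_\sG$ to a general leaf $F$ equals $K_F$, the condition $K_\sG \equiv 0$ gives $K_F \equiv 0$, so $F$ is not uniruled. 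Together with the uniruledness of $X$ (because $K_X$ is not pseudo-effective), the covering rational curves cannot lie in general fibers, and this forces $C \cong \mathbb{P}^1$.

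After resolving the indeterminacies of $\phi$ by a birational morphism $\beta \colon Z \to X$, we get a morphism $\phi' := \phi \circ \beta \colon Z \to C$. The closed form $\beta^{[*]}\omega$ vanishes on fibers of $\phi'$; closedness then forces it to be (generically, hence up to the twist) the pullback of a rational $1$-form $\omega_C$ on $C$. Since $\pi_1(\mathbb{P}^1)$ is trivial, the monodromy character $\chi \colon \pi_1(X_\textup{reg}) \to \mathbb{C}^*$ of $\sL$ must factor through the character $\chi_C \colon \pi_1(C \setminus S) \to \mathbb{C}^*$ determined by the residues $\Res_s \omega_C$ for $s \in S$, where $S$ is the finite set of critical values of $\phi$.

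A local computation at a component $F_{i,s}$ of $(\phi')^{-1}(s)$ with multiplicity $m_{i,s}$ shows that $(\phi')^* \bigl(\frac{dt}{t-s}\bigr)$ has a simple logarithmic pole along $F_{i,s}$ with residue $m_{i,s}$. The assumption that $\omega$ has zero set of codimension at least two (equivalently, that $\sL$ extends as a genuine line bundle across the multiple fibers, so the local monodromy around each $F_{i,s}$ is trivial) then forces $m_{i,s} \cdot \Res_s \omega_C \in \mathbb{Z}$. Hence every residue is rational, $\chi$ has finite image of some order $N$, and taking $f \colon X_1 \to X$ to be the normalization of the base change of $X$ along the cyclic cover $\tilde{C} \to C$ of degree $N$ totally ramified over $S$ with the appropriate indices yields a quasi-\'etale cover on which $f^{[*]}\sL$ is trivial and $f^{[*]}\omega$ is the desired closed untwisted rational $1$-form with zero set of codimension at least two. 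The main obstacle will be the careful matching between the ramification of $\tilde{C} \to C$ and the multiplicities of the fibers of $\phi$, so as to guarantee that $f$ is genuinely quasi-\'etale rather than ramified in codimension one.
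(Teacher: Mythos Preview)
Your argument has a genuine gap at the very first step. The claim that ``$\sG$ is locally defined by closed $1$-forms, so it is closed under $p$-th power operations for almost all primes $p$'' is not correct. Cartier's criterion applies to foliations defined by a closed \emph{algebraic} rational $1$-form. Here the local closed $1$-forms you invoke come from \emph{analytic} flat trivialisations of $\sL$; unless the monodromy character $\chi\colon\pi_1(X_{\textup{reg}})\to\mathbb{C}^*$ of $\sL$ is torsion, these trivialisations are transcendental and the resulting local forms do not glue to anything defined over a finitely generated $\mathbb{Z}$-algebra. The $p$-closedness condition is arithmetic in nature and cannot be read off from such analytic local data. Concretely, Riccati-type foliations on $E\times\mathbb{P}^1$ defined by $\frac{dz}{z}=\lambda\alpha$ with $\alpha\in H^0(E,\Omega^1_E)$ and generic $\lambda$ are locally given by closed $1$-forms, yet fail to be $p$-closed for almost all $p$.

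This is not a technical oversight but a circularity: showing that $\sL$ becomes trivial after a quasi-\'etale cover is exactly what the proposition asserts, and once that is known the $p$-closedness would indeed follow. In the proof of Theorem~\ref{thm_intro:main} the two cases ``$\sG$ is $p$-closed for almost all $p$'' and ``$\sG$ is given by a closed twisted rational $1$-form'' are handled as genuinely distinct alternatives, with Proposition~\ref{prop:flatness_torsion} being the key input for the second. The paper's proof avoids any appeal to $p$-closedness or algebraic integrability: it first uses Lemma~\ref{lemma:leaf_rc_quotient} and the residue dichotomy of \cite[Section~8.2.1]{lpt} to reduce to the case where all residues of $\omega$ vanish, then runs a minimal model program to reach a Mori fiber space $\psi\colon X\to Y$ of relative dimension one, and finally analyses the polar divisors of $\omega$ relative to $\psi$ via Lemmas~\ref{lemma:first_integral}, \ref{lemma:closed_form_lc}, \ref{lemma:P_bundle_2} and Proposition~\ref{prop:transverse}, concluding with an Albanese argument. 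Even setting aside the first step, your later claim that the monodromy of $\sL$ factors through $\pi_1(C\setminus S)$ is unjustified: the restriction $\sL_{|F}$ to a general fibre $F$ (which has $K_F\equiv 0$, hence possibly large $\pi_1$) need not be trivial, so $\omega$ need not descend to the base.
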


To prove Proposition \ref{prop:flatness_torsion}, we will need the following auxiliary result.

\begin{lemma}\label{lemma:first_integral}
Let $X$ be a smooth quasi-projective variety, and let $\sG\subset T_X$ be a codimension one foliation with canonical singularities. Suppose that $\sG$ is given by a closed rational $1$-form $\omega$ with values in a flat line bundle whose zero set has codimension at least two. Suppose in addition that the residue of $\omega$ at a general point of any irreducible component of its polar set is zero. Let $B$ be a codimension two irreducible component of the singular set of $\sG$ which is contained in the polar set of $\omega$ and let $(D_i)_{i\in I}$ be the set of irreducible components of the polar set of $\omega$ containing $B$. If $x \in B$ is a general point, then $\sG$ has a first integral at $x$ of the form $f:=\prod_{i\in I}f_i^{m_i}$ where $f_i$ is a local holomorphic equation of $D_i$ at $x$
and $m_i$ is a positive integer for every $i\in I$.
\end{lemma}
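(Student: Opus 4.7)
The plan is to work analytically locally at the general point $x\in B$. Since the flat line bundle $\sL$ trivializes on a simply connected analytic open neighbourhood $U$ of $x$, we can view $\omega|_U$ as a genuine closed meromorphic $1$-form with polar support $\bigcup_{i\in I}(D_i\cap U)$ and zero residues along each $D_i$ (the residue being analytic, propagates from a general point of $D_i$). The meromorphic Poincar\'e lemma then provides a meromorphic function $u$ on $U$ with $\omega|_U=du$; writing it in reduced form
\[u\;=\;\frac{P}{\prod_{i\in I}f_i^{m_i}},\]
with $P$ holomorphic on $U$ coprime to each $f_i$ and $m_i\ge 1$, yields a meromorphic first integral of $\sG|_U$. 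The statement of the lemma then reduces to the single assertion $P(x)\neq 0$: once $P$ is a unit at $x$, a local holomorphic $m_1$-th root $P^{-1/m_1}$ exists, and replacing $f_1$ by $f_1':=P^{-1/m_1}f_1$ (still a local equation of $D_1$) gives $\prod_{i\in I}f_i'^{m_i}=1/u$, which is a first integral of $\sG$ of the required form.

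To show $P(x)\neq 0$, I will reduce to a surface computation and appeal to the classification of canonical foliation singularities in dimension two. Let $S$ be a two-dimensional complete intersection of general very ample divisors through $x$. By Proposition~\ref{prop:bertini}, the induced foliation $\sL:=\sG|_S$ has canonical singularities near $x$; a general choice of $S$ also guarantees that $\omega|_S$ is still closed with vanishing residues along the smooth curves $C_i:=D_i\cap S$, that $u|_S=\widetilde{P}/\prod g_i^{m_i}$ is reduced with $\widetilde{P}:=P|_S$ and $g_i:=f_i|_S$ local equations of $C_i$, and that $x\in B\cap S\subseteq\Sing(\sL)$. A direct computation of $du|_S$ shows that $\sL$ is locally generated at $x$ by the holomorphic $1$-form
\[\eta\;=\;\Big(\prod_{i\in I}g_i\Big)\,d\widetilde{P}\;-\;\widetilde{P}\sum_{i\in I}m_i\Big(\prod_{j\neq i}g_j\Big)dg_i,\]
from which the linear part at $x$ of the associated vector field $v$ can be read off directly, making Proposition~\ref{prop:surface_singularity} and Fact~\ref{fact:nilpotent} applicable.

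The main work is the resulting case analysis. If $|I|\geq 3$, or if $|I|=2$ and $\widetilde{P}(x)=0$, every monomial in $\eta$ vanishes to order at least two at $x$, so $D_xv=0$ is in particular nilpotent and $\sL$ is non-canonical by Fact~\ref{fact:nilpotent}. If $|I|=1$ and $\widetilde{P}(x)=0$, a short computation in analytic coordinates with $g_1=z_1$, distinguishing whether $\partial_{z_2}\widetilde{P}(0)$ vanishes, shows that $D_xv$ is either nilpotent or diagonalizable with eigenvalue ratio a positive integer; Proposition~\ref{prop:surface_singularity} again gives non-canonicity. Finally, if $|I|=1$ and $\widetilde{P}(x)\neq 0$, the explicit formula for $\eta$ yields $\eta(x)\neq 0$, so $x$ is a regular point of $\sL$, contradicting $x\in\Sing(\sL)$. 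Therefore $|I|=2$ and $\widetilde{P}(x)\neq 0$, so $P(x)\neq 0$, and the first paragraph completes the proof.
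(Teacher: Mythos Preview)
Your proof is correct and follows the same overall architecture as the paper's: restrict to a general surface $S$ via Proposition~\ref{prop:bertini}, write the form locally as $d(P/\prod f_i^{m_i})$ using that residues vanish, and reduce the statement to showing $P(x)\neq 0$. The difference lies entirely in how this last point is established.

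The paper dispatches $P(x)\neq 0$ in one line by invoking \cite[Observation I.2.6]{mcquillan08}: a canonical foliation singularity on a surface has only finitely many separatrices. If $P(x)=0$, then every level curve $\{P=c\prod g_i^{m_i}\}$ passes through $x$ and is $\sL$-invariant, giving infinitely many separatrices; contradiction. You instead carry out an explicit case analysis on $|I|$ and on whether $\widetilde P(x)$ vanishes, computing the linear part of the dual vector field of $\eta$ and applying Proposition~\ref{prop:surface_singularity} and Fact~\ref{fact:nilpotent} directly. This is longer but more self-contained: it avoids importing McQuillan's separatrix result and uses only the local classification already recalled in the paper. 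As a byproduct you also obtain $|I|=2$, which the paper's argument does not extract (and which the lemma does not require). Your invocation of the meromorphic Poincar\'e lemma plays the same role as the paper's citation of \cite[Th\'eor\`eme III.2.1, Premi\`ere partie]{cerveau_mattei}.

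One small point worth making explicit in your write-up: the form $\eta=(\prod g_i^{m_i+1})\,\omega|_S$ has \emph{isolated} zeroes because $\omega|_S$ has codimension-two zero set (by the choice of general $S$) and $\eta$ clears exactly the pole orders of $\omega|_S$ along the $C_i$; this is what ensures $\eta$ is a genuine local generator of $\sN_\sL^*$, so that $\operatorname{Sing}(\sL)$ coincides with its zero locus and your contradiction in the case $|I|=1$, $\widetilde P(x)\neq 0$ is valid.
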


\begin{proof}
Let $S \subseteq X$ be a two dimensional complete intersection of general elements of some very ample linear system on $X$. Notice that $S$ is smooth and that the foliation $\sL$ of rank one on $S$ induced by $\sG$ has canonical singularities in a Zariski open neighbourhood of $B \cap S$ by Proposition \ref{prop:bertini}. 

Let now $x \in B\cap S$. Let also $f_i$ be a local holomorphic equation of $D_i$ at $x$. By \cite[Th\'eor\`eme III.2.1, Premi\`ere partie]{cerveau_mattei}, there exist positive integers $m_i$ and a local holomorphic function $g$ at $x$ such that $\sG$ is given in some analytic open neighbourhood of $x$ by the $1$-form
$$d\Big(\frac{g}{\prod_{i\in I}f_i^{m_i}}\Big).$$ 
Set $f:=\prod_{i\in I}f_i^{m_i}$. We may also assume that $f_i$ and $g$ are relatively prime at $x$ for any $i \in I$. By general choice of $S$, the restrictions of $g$ and $f$ to $S$ are relatively prime non-zero local holomorphic functions. In particular, $\sL$ is given by $d\big(\frac{g}{f}\big)$ on some analytic open neighbourhood of $x$ in $S$. Now, recall from \cite[Observation I.2.6]{mcquillan08}, that $\sL$ has only finitely many separatrices at $x$. This immediately implies that $g(x)\neq 0$. Let $i_0 \in I$. Then $ \big(f_{i_0}g^{-\frac{1}{m_{i_0}}}\big)^{m_{i_0}}\cdot \prod_{i\in I\setminus\{i_0\}}f_i^{m_i}$ is a holomorphic
first integral of $\sG$ at $x$, proving the lemma.
\end{proof}

The proof of Proposition \ref{prop:flatness_torsion} makes use of the following minor generalization of \cite[Proposition 3.11]{lpt}. 

\begin{lemma}\label{lemma:closed_form_lc}
Let $X$ be a normal variety with quotient singularities and let $\sG$ be a codimension one foliation on $X$ with canonical singularities. Suppose that $\sG$ is given by a closed rational $1$-form $\omega$ with zero set of codimension at least two. Let $D$ be the reduced divisor on $X$ whose support is the polar set of $\omega$. Then $(X,D)$ has log canonical singularities. 
\end{lemma}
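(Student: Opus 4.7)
The plan is to reduce to the smooth case, namely \cite[Proposition 3.11]{lpt}, via the analytic quasi-\'etale cover of $X$ afforded by its quotient singularity structure. Since log canonicity is a local (analytic) condition, I would work in an analytic neighbourhood of an arbitrary point $x\in X$. There, the quotient singularity structure provides a smooth analytic variety $Y$ together with a finite Galois cover $\pi\colon Y\to U\subseteq X$ that is \'etale outside the singular locus of $X$; in particular $\pi$ is quasi-\'etale, i.e.\ has no codimension-one branch components.

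I would then verify that $\pi^*\omega$ puts us in the hypotheses of the smooth version of the statement. It is a closed rational $1$-form on $Y$ because pullback commutes with the exterior derivative; its reduced polar divisor is $\pi^{-1}(D)$ because $\pi$ is unramified in codimension one; and its zero set is contained in the union of $\pi^{-1}(\{\omega=0\})$ and the branch locus of $\pi$, and is therefore of codimension at least two in $Y$. Moreover, by Lemma \ref{lemma:canonical_quasi_etale_cover} (whose hypothesis on the codimension-one branch locus is vacuous for a quasi-\'etale cover), the foliation $\pi^{-1}\sG$ inherits canonical singularities from $\sG$. Consequently, the smooth case applies on $Y$ and yields that $(Y,\pi^{-1}(D))$ is log canonical.

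To descend this to $X$, I would use that quotient singularities are $\mathbb{Q}$-factorial, so that $K_X+D$ is $\mathbb{Q}$-Cartier, and that $\pi$ being quasi-\'etale gives the ramification formula $K_Y+\pi^{-1}(D)\sim_\mathbb{Q}\pi^*(K_X+D)$. Pulling back $\mathbb{Q}$-divisors under such a finite cover preserves and reflects log canonicity, so $(X,D)$ is log canonical as required (cf.\ the standard statement for finite covers \'etale in codimension one in \cite[Proposition 5.20]{kollar_mori}).

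The main subtlety, rather than a genuine obstacle, is checking that the local analytic chart $\pi\colon Y\to U$ interacts correctly with the algebraically phrased Lemma \ref{lemma:canonical_quasi_etale_cover} and with \cite[Proposition 3.11]{lpt}; but all the relevant conditions (closedness of the form, codimension of its zero set, canonical singularities of the foliation, and log canonicity of the pair) are analytic-local in nature, so the algebraic statements transfer to this setting without modification.
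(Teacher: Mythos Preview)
Your proposal is correct and follows essentially the same route as the paper: pass to smooth quasi-\'etale charts afforded by the quotient singularity structure, apply \cite[Proposition 3.11]{lpt} there (using Lemma \ref{lemma:canonical_quasi_etale_cover} to ensure the pulled-back foliation is canonical), and descend log canonicity via the standard finite-cover comparison of discrepancies. The paper cites \cite[Proposition 3.16]{kollar97} for the descent step where you cite \cite[Proposition 5.20]{kollar_mori}, but these serve the same purpose.
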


\begin{proof}
Let $\{p_{\alpha}:X_{\alpha}\to X\}_{\alpha \in A}$ be a quasi-\'etale $\mathbb{Q}$-structure on $X$. 
The claim follows easily from \cite[Proposition 3.11]{lpt} applied to $p_\alpha^{-1}\sG$ on $X_\alpha$ for any $\alpha\in A$ together with
\cite[Proposition 3.16]{kollar97}. 
\end{proof}

We will also need the following generalization of \cite[Corollary 3.10]{lpt}.

\begin{lemma}\label{lemma:leaf_rc_quotient}
Let $X$ be a normal projective variety with klt singularities, and let $\sG\subset T_X$ be a codimension one foliation with canonical singularities. Suppose that $K_\sG\equiv 0$. Suppose furthermore that $X$ is uniruled and let $X \map R$ be the maximal rationally chain connected fibration. Then any algebraic leaf of $\sG$ dominates $R$. 
\end{lemma}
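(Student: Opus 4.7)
The plan is to argue by contradiction. Suppose some algebraic leaf $L$ of $\sG$ does not dominate $R$ under the MRC fibration $\pi \colon X \map R$. Let $\beta \colon Z \to X$ be a log resolution of the pair $(X, L)$, so that the strict transform $\tilde L \subset Z$ is a smooth prime divisor. Set $\tilde \sG := \beta^{-1}\sG$; by Lemma~\ref{lemma:singularities_birational_morphism}, $\tilde \sG$ has canonical singularities, and since $\sG$ is canonical with $K_\sG \equiv 0$, one has $K_{\tilde \sG} \sim_\mathbb{Q} \beta^* K_\sG + E \equiv E$ for some effective $\beta$-exceptional divisor $E$ on $Z$. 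The divisor $\tilde L$ is $\tilde \sG$-invariant (i.e.\ a leaf of $\tilde \sG$) and does not dominate the base $R_Z$ of the MRC fibration $\pi_Z$ of $Z$, which is birationally equivalent to $R$.

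A direct dimension count using $\dim \tilde L = \dim Z - 1$ and $\dim \pi_Z(\tilde L) \le \dim R_Z - 1$ shows that a general fiber of $\pi_Z|_{\tilde L}$ has dimension at least $\dim Z - \dim R_Z$, which equals the dimension of a general fiber of $\pi_Z$. Combined with the irreducibility of general MRC fibers, this forces $\tilde L$ to contain every general fiber of $\pi_Z$ over $\pi_Z(\tilde L)$. Since MRC fibers are rationally chain connected, $\tilde L$ is therefore uniruled.

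Now because $Z$ is smooth and $\tilde L$ is a smooth $\tilde \sG$-invariant prime divisor on $Z$, the natural surjection $N_{\tilde L/Z} \twoheadrightarrow N_{\tilde \sG}|_{\tilde L}$ of rank-one sheaves is an isomorphism of line bundles on the complement of the codimension $\ge 2$ singular locus of $\tilde \sG$. A standard Chern-class and adjunction computation then yields
\[
K_{\tilde L} \equiv K_{\tilde\sG}|_{\tilde L} \equiv E|_{\tilde L}.
\]
Since $\tilde L$ is not $\beta$-exceptional and $E \ge 0$, the divisor $E|_{\tilde L}$ is effective, so $K_{\tilde L}$ is pseudo-effective. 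But $\tilde L$ is a smooth projective uniruled variety, so by the theorem of Boucksom--Demailly--P\u{a}un--Peternell, $K_{\tilde L}$ cannot be pseudo-effective. This contradiction completes the argument.

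The main obstacle is the passage to the resolution and the verification that $\tilde L$ is indeed $\tilde \sG$-invariant and that the foliated adjunction $K_{\tilde L} \equiv K_{\tilde \sG}|_{\tilde L}$ holds as a numerical equivalence. Both facts follow from a local computation together with the observation that $N_{\tilde \sG}$ is a line bundle outside a codimension $\ge 2$ closed subset of $Z$; this ultimately reduces the klt statement to the smooth case treated in \cite[Corollary 3.10]{lpt}.
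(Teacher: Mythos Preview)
Your approach is genuinely different from the paper's, which passes to a smooth model $X_1 \to R_1$, uses the twisted $q$-form $\omega \in H^0(X_1,\Omega^q_{X_1}\otimes f_1^*\sO_{R_1}(-K_{R_1}))$ coming from $df_1$, and shows that it restricts to a nonzero section $\sigma$ of $\wedge^q(\sG_1^*)\otimes\sM$; semistability of $\wedge^q(\sG_1^*)$ (from $K_\sG\equiv 0$ and the canonical hypothesis) then forces the codimension-one zeros of $\sigma$ to be $\beta$-exceptional, contradicting the fact that $\sigma$ vanishes along any non-dominating leaf. No foliated adjunction appears.

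Your argument, however, has a genuine gap in the adjunction step. First, a minor point: Lemma~\ref{lemma:singularities_birational_morphism} does \emph{not} give that $\tilde\sG$ is canonical---the implication there goes the other way---but you do not actually need this. The real issue is the identity $K_{\tilde L}\equiv K_{\tilde\sG}|_{\tilde L}$. What the local computation gives is a nonzero morphism $N_{\tilde L/Z}\to N_{\tilde\sG}|_{\tilde L}$ of line bundles on $\tilde L$, induced by the defining twisted $1$-form. This is an isomorphism only away from $\mathrm{Sing}(\tilde\sG)\cap\tilde L$, which has codimension $\ge 2$ in $Z$ but may well have codimension one in $\tilde L$. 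Hence one only obtains
\[
c_1\big(N_{\tilde\sG}|_{\tilde L}\big)=c_1\big(N_{\tilde L/Z}\big)+D,\qquad D\ge 0\ \text{effective on }\tilde L,
\]
and the adjunction computation yields $K_{\tilde L}\equiv E|_{\tilde L}-D$. There is no reason for $E|_{\tilde L}-D$ to be pseudo-effective, so the contradiction with BDPP does not follow. To salvage the strategy you would need to control $D$ in terms of $E$, and nothing in your setup (canonical singularities of $\sG$, choice of log resolution) provides such a bound.

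A secondary issue: the argument that $\tilde L$ is uniruled is incomplete as written. Fibres of $\pi_Z$ over points of the proper subvariety $\pi_Z(\tilde L)$ need be neither irreducible nor rationally connected, so the ``dimension count plus irreducibility'' does not apply directly. This can be repaired (for instance by invoking that specializations of rationally connected varieties are rationally chain connected), but it is not the sticking point---the adjunction gap above is.
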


\begin{proof}
Consider a commutative diagram:
\begin{center}
\begin{tikzcd}[row sep=large]
X_1  \ar[d, "{f_1}"']\ar[rr, "{\beta,\textup{ birational}}"] & & X\ar[d, dashed] \\
R_1 \ar[rr, "{\textup{ birational}}"]&& R,
\end{tikzcd}
\end{center}
where $X_1$ and $R_1$ are smooth projective varieties. Applying \cite[Theorem 1.1]{ghs03}, we see that $R_1$ is not uniruled. This in turn implies that $K_{R_1}$ is pseudo-effective by \cite[Corollary 0.3]{bdpp}. Set $\sM:=f_1^*\sO_{R_1}(-K_{R_1})$ and $q:=\dim R_1$, and let 
$\omega \in H^0\big(X_1,\Omega^q_{X_1}\otimes \sM\big)$ be the twisted $q$-form induced by $df_1$.
Let $\sG_1$ be the pull-back of $\sG$ on $X_1$. Then $\omega$ yields a non-zero section $\sigma \in H^0\big(X_1,\wedge^q(\sG_1^*)\otimes\sM\big)$ by 
\cite[Proposition 4.22]{cd1fzerocan}. On the other hand, $\wedge^q(\sG_1^*)$ is semistable with respect to the pull-back on $X_1$ of any ample divisor on $X$ by \cite[Lemma 8.15]{cd1fzerocan} together with \cite[Theorem 5.1]{campana_peternell11}. This immediately implies that codimension one zeroes of $\sigma$ are 
$\beta$-exceptional. Now, let $F$ be the closure of an algebraic leaf of $\sG$ and denote by $F_1$ its proper transform on $X_1$. If $F_1$ does not dominate $R_1$, then $\sigma$ must vanish along $F_1$, yielding a contradiction.
\end{proof}

\begin{proof}[{Proof of Proposition \ref{prop:flatness_torsion}}] For the reader's convenience, the proof is subdivided into a number of steps. By assumption, there exist prime divisors $(D_i)_{1\le i \le r}$ on $X$ and positive integers $(m_i)_{1\le i \le r}$ such that $\sN_\sG \cong \sO_X\big(\sum_{1\le i \le r} m_i D_i\big)\otimes\sL$.

\medskip

\noindent\textit{Step 1.} Let $X \map R$ be the maximal rationally chain connected fibration. Recall that it is an almost proper map and that its general fibers are rationally chain connected. Consider a commutative diagram:
\begin{center}
\begin{tikzcd}[row sep=large]
X_1  \ar[d]\ar[rr, "{\beta,\textup{ birational}}"] & & X\ar[d, dashed] \\
R_1 \ar[rr, "{\textup{ birational}}"]&& R,
\end{tikzcd}
\end{center}
where $X_1$ and $R_1$ are smooth projective varieties. 
Notice that general fibers of $X_1 \to R_1$ are rationally chain connected by \cite[Theorem 1.2]{hacon_mckernan} and
recall from \cite[Proposition III. 1.1, Premi\`ere partie]{cerveau_mattei}, that the hypersurfaces $D_i$ are invariant under $\sG$.
By Lemma \ref{lemma:leaf_rc_quotient} above, we conclude that $D_i$ dominates $R$ for every $1 \le i \le r$. Now, $\omega$ induces 
a closed rational $1$-form $\omega_1$ on $X_1$ with values in the flat line bundle $\beta^*\sL$.

Arguing as in \cite[Section 8.2.1]{lpt}, one concludes that $\beta^*\sL$ is torsion if the residue of
$\omega_1$ at a general point of $D_i$ is non-zero for some $1 \le i\le r$.

\medskip

Suppose from now on that the residue at a general point of $D_i$ is zero for any $1 \le i\le r$.

\medskip

\noindent\textit{Step 2.} Arguing as in Steps $1-2$ of the proof of \cite[Proposition 11.6]{cd1fzerocan}, we may assume without loss of generality that the following holds.

\begin{enumerate}
\item There is no positive dimensional algebraic subvariety tangent to $\sG$ passing through a general point in $X$.
\item The variety $X$ is $\mathbb{Q}$-factorial.
\end{enumerate}

\medskip

\noindent\textit{Step 3.} Since $K_X$ is not pseudo-effective by assumption, we may run a minimal model program for $X$ and end with a Mori fiber space (see \cite[Corollary 1.3.3]{bchm}). Therefore, there exists a sequence of maps

\begin{center}
\begin{tikzcd}[row sep=large, column sep=large]
X:=X_0 \ar[r, "{\phi_0}", dashrightarrow] & X_1 \ar[r, "{\phi_1}", dashrightarrow] & \cdots \ar[r, "{\phi_{i-1}}", dashrightarrow] & X_i \ar[r, "{\phi_i}", dashrightarrow] & X_{i+1} \ar[r, "{\phi_{i+1}}", dashrightarrow] & \cdots \ar[r, "{\phi_{m-1}}", dashrightarrow] & X_m \ar[d, "{\psi_m}"]\\
&&&&&& Y
\end{tikzcd}
\end{center}

\noindent where the $\phi_i$ are either divisorial contractions or flips, and $\psi_m$ is a Mori fiber space. The spaces $X_i$ are normal, $\mathbb{Q}$-factorial, and $X_i$ has klt singularities for all $0\le i \le m$. 
Let $\sG_i$ be the foliation on $X_i$ induced by $\sG$. Arguing as in Step 2 of the proof of 
\cite[Theorem 9.4]{cd1fzerocan}, we see that $K_{\sG_i}\equiv 0$ and that $\sG_i$ has canonical singularities.
Moreover, $\omega$ induces a closed rational $1$-form $\omega_m$ on $X_m$ with values in a flat line bundle $\sL_m$, whose zero set has codimension at least two. By construction, $\sG_m$ is given by $\omega_m$, and if $\sL_m$ is a torsion flat line bundle then so is $\sL$.

We will show in Steps $4-6$ that either $\sL_m$ is torsion, or $X_m$ is smooth, the polar locus of $\omega_m$ is a smooth connected hypersurface, say $D_{1,m}:=(\phi_{m-1}\circ\cdots\circ\phi_0)_* D_1$, and $\sG_m$ can be defined by a nowhere vanishing (closed) logarithmic $1$-form with poles along $D_{1,m}\sqcup D'_{1,m}$ for some smooth (connected) hypersurface $D'_{1,m}$. Taking this for granted, one concludes that 
the conclusion of Proposition \ref{prop:flatness_torsion} holds for $X$ as in Step 3 of the proof of \cite[Proposition 11.6]{cd1fzerocan}.

\medskip

For simplicity of notation, we will assume in the following that $X=X_m$, writing $\psi:=\psi_m$.

\medskip

\noindent\textit{Step 4.} Arguing as in Step 5 of the proof of \cite[Proposition 11.6]{cd1fzerocan}, we see that the following additional properties hold.

\begin{enumerate}
\item We have $\dim Y = \dim X -1$. 
\item Moreover, $\psi(D_1)=Y$, $m_1=2$ and $D_1 \cdot F = 1$, where $F$ denotes a general fiber of $\psi$. Moreover, 
$\psi(D_i) \subsetneq Y$ for any $2\le i \le r$.
\end{enumerate}

\medskip

\noindent\textit{Step 5.} Recall from \cite[Proposition 9.3]{greb_kebekus_kovacs_peternell10} that klt spaces have quotient singularities in codimension $2$. 

By Lemma \ref{lemma:closed_form_lc}, $\big(X,\sum_{1 \le i\le r}D_i\big)$ has log canonical singularities in codimension $2$.
Let $2\le i \le r$.
Lemma \ref{lemma:P_bundle_2} below then says that there exists a Zariski open set $Y_i^\circ$ containing the generic point of $\psi(D_i)$
and a finite cover $g_i^\circ\colon T_i^\circ \to Y_i^\circ$ of smooth varieties such that the normalization $M_i^\circ$ of the fiber product $T_i^\circ\times_Y X$ is a $\mathbb{P}^1$-bundle over $T_i^\circ$ and such that the map $M_i^\circ \to X_i^\circ$ is a quasi-\'etale cover, where $X_i^\circ:=\psi^{-1}(Y_i^\circ)$.
By Lemma \ref{lemma:canonical_quasi_etale_cover}, the pull-back $\sG_{M_i^\circ}$ of $\sG_{|X_i^\circ}$ to $M_i^\circ$ has canonical singularities. Applying Lemma \ref{lemma:first_integral} and shrinking $Y_i^\circ$, if necessary, we see that  
$\sG_{M_i^\circ}$ is defined locally for the analytic topology
by a closed $1$-form with zero set of codimension at least two or by the $1$-form $x_{2}dx_{1} + \lambda x_{1}dx_{2}$ where $(x_{1},\ldots,x_{n})$ are analytic coordinates on $X_i$ and $\lambda \in \mathbb{Q}_{>0}$. On the other hand, the restriction of $\sG$ to $X \setminus \cup_{1\le i \le r} D_i$ is locally defined by closed $1$-forms with zero set of codimension at least two by assumption.
Therefore, there exist a closed subset $Z \subset X$ of codimension at least $3$ and an analytic quasi-\'etale $\mathbb{Q}$-structure $\{p_j\colon X_j\to X\setminus Z\}_{j \in J}$ on $X\setminus Z$ such that 
$p_j^{-1}\sG$ is defined by a closed $1$-form with zero set of codimension at least two or by the $1$-form $x_{2}dx_{1} + \lambda_j x_{1}dx_{2}$ where $(x_{1},\ldots,x_{n})$ are analytic coordinates on $X_i$ and $\lambda_j \in \mathbb{Q}_{>0}$ (see Fact \ref{fact:purity}). 
Proposition \ref{prop:transverse} then says that there exists an open subset $Y^\circ \subseteq Y_{\textup{reg}}$ with complement of codimension at least two and a finite cover $g\colon T \to Y$ such that the following holds. Set $T^\circ:=g^{-1}(Y^\circ)$ and $X^\circ:=\psi^{-1}(Y^\circ)$. 
\begin{enumerate}
\item The variety $T$ has canonical singularities and $K_T \sim_\mathbb{Z}0$; $T^\circ$ is smooth.
\item The normalization $M^\circ$ of the fiber product $T^\circ\times_Y X$ is a $\mathbb{P}^1$-bundle over $T^\circ$ and the map $M^\circ \to X^\circ$ is a quasi-\'etale cover.
\item The pull-back of $\sG_{|X^\circ}$ yields a flat Ehresmann connection on $M^\circ \to T^\circ$.
\end{enumerate}

Moreover, there exists a $\mathbb{Q}$-divisor $B$ on $Y$ such that $K_Y+B$ is torsion (see the proof of Proposition \ref{prop:transverse}).

\medskip

\noindent\textit{Step 6.} Suppose from now on that $\sL$ is not torsion, and let $a_Y\colon Y \to \textup{A}(Y)$
be the Albanese morphism. Arguing as in Step $7$ of the proof of \cite[Proposition 11.6]{cd1fzerocan}, we conclude that $a_Y$ is generically finite.

If $B \neq 0$, then $Y$ is uniruled. But this contradicts the fact that $a_Y$ is generically finite. Therefore, $B=0$ and $K_Y$ is torsion.
The argument used in Step $7$ of the proof of \cite[Proposition 11.6]{cd1fzerocan} then shows that $X$ and $D_1$ are smooth and that 
$\sG$ can be defined by a nowhere vanishing (closed) logarithmic $1$-form with poles along $D_1\sqcup D'_1$ for some smooth (connected) hypersurface $D'_1$. 
This finishes the proof of the proposition.
\end{proof}

\begin{lemma}\label{lemma:P_bundle_2}
Let $X$ be a quasi-projective variety with klt singularities and let $\psi\colon X \to Y$ be a Mori fiber space where $Y$ is a smooth quasi-projective variety. Suppose in addition that $\dim Y = \dim X -1$ and that there exists a section $D \subset X$ of $\psi$. Let $G\subset Y$ be an irreducible hypersurface and set 
$E:=\psi^{-1}(G)$. Suppose furthermore that $(X,D+E)$ has log canonical singularities over the generic point of $G$. 
Then there exist a Zariski open neighbourhood $Y^\circ$ of the generic point of $G$ in $Y$ and a finite Galois cover $g^\circ\colon T^\circ \to Y^\circ$ of smooth varieties such that the normalization $M^\circ$ of the fiber product $T^\circ\times_Y X$ is a $\mathbb{P}^1$-bundle over $T^\circ$ and such that the map $M^\circ \to X^\circ$ is a quasi-\'etale cover, where $X^\circ:=\psi^{-1}(Y^\circ)$.
\end{lemma}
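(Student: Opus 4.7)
The idea is to eliminate the possibly multiple fiber of $\psi$ above $G$ by a cyclic base change of the appropriate degree. Since $\psi$ is a Mori fiber space of relative dimension one admitting the section $D$, its general fiber is $\mathbb{P}^1$. Shrinking $Y$ to an affine Zariski neighbourhood $Y^\circ$ of the generic point of $G$, we may assume $G\cap Y^\circ$ is smooth with a global defining function $u$, that $\psi$ is smooth over $Y^\circ \setminus G$, and that the reduced preimage $E := \psi^{-1}(G)_\mathrm{red}$ is irreducible in $X^\circ := \psi^{-1}(Y^\circ)$. Extremality of $\psi$ on the $\mathbb{Q}$-factorial klt variety $X^\circ$ then gives $\psi^{*} G = m E$ as Cartier divisors for a unique integer $m \ge 1$.

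Define $g^{\circ}\colon T^{\circ} \to Y^{\circ}$ as the cyclic $\mu_m$-cover $\Spec_{Y^\circ}\,\mathcal{O}_{Y^\circ}[v]/(v^m - u)$, branched of index $m$ along $G \cap Y^\circ$. Since the branch locus is smooth, $T^\circ$ is smooth. Let $M^\circ$ be the normalization of $T^{\circ} \times_{Y^{\circ}} X^{\circ}$, with induced finite morphisms $\phi\colon M^\circ \to X^\circ$ and $\pi\colon M^\circ \to T^\circ$.

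The heart of the proof is verifying that $\phi$ is quasi-\'etale and that $\pi$ is a $\mathbb{P}^1$-bundle after further shrinking. The only possible codimension-one branch locus of $\phi$ is $E$. At a general codimension-one point of $E$, the variety $X^\circ$ is smooth and $\psi$ analytically locally has the form $(x_1,\ldots,x_n) \mapsto (x_1^m, x_2, \ldots, x_n)$ with $E = \{x_1 = 0\}$; the fiber product $T^\circ\times_{Y^\circ} X^\circ$ locally looks like $\{x_1^m = v^m\}$, splits into $m$ components, and the normalization projects \'etalely onto any chosen component. At the section point $p_0 = D \cap E$ above a general point of $G$, the log canonical hypothesis on $(X, D+E)$, combined with a discrepancy computation on the minimal resolution of a general surface transversal through $p_0$, forces the local singularity of $X^\circ$ to be the cyclic quotient $\tfrac{1}{m}(1,1)$, with $D$ and $E$ becoming the axis curves of the quotient. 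In this model the base change exactly inverts the quotient map, $M^\circ$ is smooth near the preimage of $p_0$, and $\phi$ is the $\mu_m$-quotient map branched only at $p_0$ (codimension two). Hence $\phi$ is quasi-\'etale.

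For $\pi$, the variety $M^\circ$ is klt (since $\phi$ is quasi-\'etale and $X^\circ$ is klt), $\pi$ is projective of relative dimension one with generic fiber $\mathbb{P}^1$ and now-reduced codimension-one fibers, $\phi^{*}D$ gives a section of $\pi$, and $-K_{M^\circ}\sim_\mathbb{Q} \phi^{*}(-K_{X^\circ})$ is $\pi$-ample. Applying \cite[Theorem II.2.8]{kollar96}, $\pi$ is a $\mathbb{P}^1$-bundle over a smaller open neighbourhood in $T^\circ$, which descends via $g^\circ$ to give the desired shrinking of $Y^\circ$. The principal obstacle throughout is the local analysis at $p_0$: the log canonical hypothesis is exactly what is needed to pin down the cyclic-quotient model along $E$, and without it the base change would fail to produce a quasi-\'etale cover.
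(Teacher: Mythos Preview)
Your approach differs from the paper's in how the log canonical hypothesis is deployed. The paper argues more softly: since $D$ is relatively ample (being numerically proportional to $-K_X$ over $Y$), every irreducible component of $\psi^{-1}(y)$ passes through the single point $p_0 = D\cap\psi^{-1}(y)$; restricting to a general transversal surface $S$, the pair $(S, D|_S + E|_S)$ is log canonical at $p_0$, which forces at most two analytic branches there, one of which is $D|_S$, so the fiber has exactly one branch and is irreducible. After writing $\psi^*G = mE$ and taking a degree-$m$ base change, the paper pulls back the log canonical pair and repeats the same two-branch argument on $M^\circ$ to obtain irreducibility of the fibers of $\pi$, and only then invokes Koll\'ar's theorem. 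By contrast, you attempt to pin down the exact local model at $p_0$ and read everything off from it.

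Your route is legitimate, but the singularity type is mis-identified. The log canonical hypothesis together with $\psi^*G = mE$ and $D$ a section forces the transversal surface singularity at $p_0$ to be a cyclic quotient $\tfrac{1}{m}(1,a)$ with $D,E$ the two axis curves, but it does \emph{not} force $a=1$; any $a$ coprime to $m$ is consistent (e.g.\ the $A_{m-1}$ case $a=m-1$). Fortunately your conclusion survives this correction: for any such $a$, lifting to $\mathbb{A}^2$ the base-change equation $\{v^m = x^m\}$ splits into $m$ sheets permuted simply transitively by $\mu_m$, so the normalization $M^\circ$ is locally $\mathbb{A}^2$, smooth, with $\phi$ the quotient map. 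You should also make explicit the irreducibility of the fibers of $\pi$, which is what Koll\'ar's theorem actually needs and which you only assert implicitly: since $M^\circ$ is smooth along the section $\phi^*D$ (by your local analysis at $p_0$ and \'etaleness elsewhere), this section is Cartier, relatively ample of fibre-degree $1$, so every component of every fibre meets it with positive integral intersection summing to $1$ --- hence exactly one component.
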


\begin{proof}By \cite[Proposition 9.3]{greb_kebekus_kovacs_peternell10}, klt spaces have quotient singularities in codimension two. Shrinking $Y$, if necessary, we may therefore assume that $X$ has quotient singularities. In particular, $X$ is $\mathbb{Q}$-factorial. Since $\psi$ is a Mori fiber space with $\mathbb{Q}$-factorial singularities and $D$ is a section of $\psi$, any irreducible component of $\psi^{-1}(y)$ meets $D$ at the unique intersection point of $D$ and $\psi^{-1}(y)$.
Let $B \subset Y$ be a one dimensional complete intersection of general members of a very ample linear system passing through a general point $y$ of $G$ and set $S =:\psi^{-1}(B) \subset X$. Notice that $S$ is klt and that $(S,D\cap S+E\cap S)$ is log canonical. This easily implies that the support of $D\cap S+E\cap S$ has at most two analytic branches at any point. 
This in turn implies that $\psi^{-1}(y)$ is irreducible and proves that $\psi$ has irreducible fibers in a neighbourhood of the generic point of $G$.

Write $\psi^*G=mE$ for some positive integer. Let $Y^\circ$ be a Zariski open neighbourhood of the generic point of $G$ in $Y$ and let $g^\circ\colon T^\circ \to Y^\circ$ be a finite Galois cover of smooth varieties with ramification index $m$ at any point over the generic point of $G$. Let $M^\circ$ be 
the normalization  of the fiber product $T^\circ\times_Y X$. Shrinking $Y^\circ$, if necessary, we may assume without loss of generality that the map 
$\phi^\circ\colon M^\circ \to T^\circ$ has reduced fibers (see \cite[Th\'eor\`eme 12.2.4]{ega28}). Observe that the map $f^\circ \colon M^\circ \to X^\circ$ is a quasi-\'etale cover, where $X^\circ:=\psi^{-1}(Y^\circ)$. It follows that $\big(M^\circ,(f^\circ)^{-1}(D\cap X^\circ)+(f^\circ)^{-1}(E\cap X^\circ)\big)$ is log canonical.
Moreover, $(f^\circ)^{-1}(D\cap X^\circ)$ is a section of $\phi^\circ$ and any irreducible component of any fiber of $\phi^\circ$ meets this section by construction. Arguing as above, we conclude that $\phi^\circ$ has irreducible fibers over general points in $\phi^\circ\big((f^\circ)^{-1}(E\cap X^\circ)\big)$. The lemma then follows from \cite[Theorem II.2.8]{kollar96}.
\end{proof}

\section{Proof of Theorem \ref{thm_intro:main} and Corollary \ref{cor_intro:main}}

\begin{proof}[{Proof of Theorem \ref{thm_intro:main}}]
We maintain notation and assumptions of Theorem \ref{thm_intro:main}. By \cite[Lemma 12.5]{cd1fzerocan}, we have $\nu(X)\le 1$.
If $\nu(X)=0$, then Theorem \ref{thm_intro:main} follows from \cite[Lemma 10.1]{cd1fzerocan} and \cite[Proposition 10.2]{cd1fzerocan}.

\medskip

We may therefore assume from now on that $\nu(X)\in\{-\infty,1\}$.

\medskip

By \cite[Proposition 12.3]{cd1fzerocan} together with \cite[Lemma 8.15]{cd1fzerocan}, either $\sG$ is closed under $p$-th powers for almost all primes $p$, or it is given by a closed rational $1$-form with values in a flat line bundle whose
zero set has codimension at least two (see also \cite[Remark 12.4]{cd1fzerocan}). 

\medskip

If $\sG$ is closed under $p$-th powers for almost all primes $p$, then it is algebraically integrable by Theorems \ref{thm:grothendieck_katz} and \ref{thm:algebraic_integrability_nu_un}. The statement then follows from \cite[Theorem 1.5]{cd1fzerocan}.

\medskip

Suppose now that $\sG$ is given by a closed rational $1$-form $\omega$ with values in a flat line bundle $\sL$ whose
zero set has codimension at least two. By assumption, there exists an effective divisor $D$ on $X$ such that $\sN_\sG\cong\sO_X(D)\otimes \sL$. On the other hand, $c_1(\sN_\sG)\equiv -K_X$ since $K_\sG \equiv 0$. This immediately implies that $\nu(X)=-\infty$. 
By Proposition \ref{prop:flatness_torsion}, there exists a quasi-\'etale cover $f\colon X_1 \to X$ such that $f^{-1}\sG$ is given by a closed
rational $1$-form with zero set of codimension at least two. Note that $X_1$ has klt singularities. Moreover, $f^{-1}\sG$ is canonical 
with $K_{f^{-1}\sG}\equiv 0$ by Lemma \ref{lemma:canonical_quasi_etale_cover}. 
Theorem \ref{thm_intro:main} follows from \cite[Theorem 11.3]{cd1fzerocan} in this case. This finishes the proof of Thereom \ref{thm_intro:main}.
\end{proof}

\begin{proof}[{Proof of Corollary \ref{cor_intro:main}}]
We maintain notation and assumptions of Corollary \ref{cor_intro:main}. By \cite[Theorem 8.1]{cascini_spicer} (see also \cite[Theorem 1.7]{spicer_svaldi}), there exists a projective birational morphism $\beta\colon X_1 \to X$ such that 
\begin{enumerate}
\item $X_1$ is $\mathbb{Q}$-factorial with klt singularities,
\item we have $K_{\beta^{-1}\sG}=\beta^*K_\sG$ and
\item $\sG_1:=\beta^{-1}\sG$ is F-dlt we refer to (\cite[Paragraph 3.2]{cascini_spicer} for this notion).
\end{enumerate}
By \cite[Proposition V.2.7]{nakayama04} and property (2) above, we have $\nu(K_{\sG_1})=\nu(K_\sG)=0$. Moreover, $\sG_1$ has canonical singularities by Lemma \ref{lemma:singularities_birational_morphism}.
Thus, we may run a minimal model program for $\sG_1$ and end with a minimal model. There exist a projective threefold $X_2$ with $\mathbb{Q}$-factorial klt singularities and a $K_{\sG_1}$-negative birational map
$\phi\colon X_1 \map X_2$ such that $K_{\sG_2}$ is nef, where
$\sG_2$ denotes the foliation on $X_2$ induced by $\sG_1$. 
Since $\phi$ is $K_{\sG_1}$-negative and using \cite[Proposition V.2.7]{nakayama04} again, we see that $\nu(K_{\sG_2})=0$. 
Since $K_{\sG_2}$ is nef, we must have $K_{\sG_2} \equiv 0$. 
On the other hand, using the fact that $\phi$ is $K_{\sG_1}$-negative together with Lemma \ref{lemma:singularities_birational_morphism}, we conclude that
$\sG_2$ has canonical singularities as well.
The corollary now follows from Theorem \ref{thm_intro:main}.
\end{proof}


\providecommand{\bysame}{\leavevmode\hbox to3em{\hrulefill}\thinspace}
\providecommand{\MR}{\relax\ifhmode\unskip\space\fi MR }
\providecommand{\MRhref}[2]{%
  \href{http://www.ams.org/mathscinet-getitem?mr=#1}{#2}
}
\providecommand{\href}[2]{#2}

\end{document}